\definecolor{lightgray}{rgb}{0.8, 0.8, 0.8}
\definecolor{darkgray}{rgb}{0.65, 0.65, 0.65}
\newcounter{todocounter}
\theoremstyle{plain}
\newtheorem{theorem}{Theorem}[section]
\newtheorem{observation}[theorem]{Observation}
\newtheorem{proposition}[theorem]{Proposition}
\newtheorem{corollary}[theorem]{Corollary}
\newtheorem{conjecture}[theorem]{Conjecture}
\theoremstyle{definition}
\newfont{\footsc}{cmcsc10 at 8truept}
\newfont{\footbf}{cmbx10 at 8truept}
\newfont{\footrm}{cmr10 at 10truept}
\renewenvironment{abstract}%
                {
                  \begin{list}{}%
                     {\setlength{\rightmargin}{1in}%
                      \setlength{\leftmargin}{1in}}%
                   \item[]\ignorespaces\begin{small}}%
                 {\end{small}\unskip\end{list}}
\newcommand{\Av}{\operatorname{Av}}
\newcommand{\C}{\mathcal{C}}
\newcommand{\D}{\mathcal{D}}
\newcommand{\G}{\mathcal{G}}
\renewcommand{\L}{\mathcal{L}}
\renewcommand{\P}{\mathcal{P}}
\newcommand{\R}{\mathcal{R}}
\newcommand{\W}{\mathcal{W}}
\newcommand{\st}{\::\:}
\newcommand{\leftcount}{\operatorname{left}}
\newcommand{\rightcount}{\operatorname{right}}
\newcommand{\bij}{\varphi}
\newcommand{\gridded}{\sharp}
\newcommand{\oldpoint}{\mathord{\circ}}
\newcommand{\newpoint}{\mathord{\bullet}}
\newcommand{\first}{^{\text{\scriptsize st}}}
\newcommand{\nd}{^{\text{\scriptsize nd}}}
\newcommand{\rd}{^{\text{\scriptsize rd}}}
\renewcommand{\th}{^{\text{\scriptsize th}}}
\let\start@align@nopar\start@align
\let\start@gather@nopar\start@gather
\let\start@multline@nopar\start@multline
\long\def\start@align{\par\start@align@nopar}
\long\def\start@gather{\par\start@gather@nopar}
\long\def\start@multline{\par\start@multline@nopar}
\newcommand\absdot[2]{
	\node at #1 {\small $\bullet$};
	\node at #1 [below] {$#2$};
}
\newcommand\absdothollow[2]{
	\node at #1 {\small \textcolor{white}{$\bullet$}};
	\node at #1 {\small $\circ$};
	\node at #1 [below] {$#2$};
}
\newcommand{\plotperm}[1]{
	\foreach \j [count=\i] in {#1} {
		\absdot{(\i,\j)}{};
	};
}
\newcommand{\plotpartialperm}[1]{
	\foreach \i/\j in {#1} {
		\absdot{(\i,\j)}{};
	};
}
\newcommand{\plotpartialpermhollow}[1]{
	\foreach \i/\j in {#1} {
		\absdothollow{(\i,\j)}{};
	};
}
\newcommand{\plotpermbox}[4]{
	\draw [darkgray, thick, rounded corners=0.01, line cap=round]
		({#1-0.5}, {#2-0.5}) rectangle ({#3+0.5}, {#4+0.5});
}
\newcommand{\plotpermgraph}[1]{
	\foreach \j [count=\i] in {#1} {
		\foreach \b [count=\a] in {#1} {
			\ifthenelse{\a<\i \AND \b>\j}{\draw (\a,\b)--(\i,\j);}{}
		};
	};
	\plotperm{#1};
}
\newcommand{\plotpermdyckgrid}[1]{
	\foreach \i [count=\n] in {#1} {};
	\foreach \i in {0,1,2,...,\n}{
		\draw [color=lightgray] ({\i+0.5}, 0.5)--({\i+0.5}, {\n+0.5});
		\draw [color=lightgray] (0.5, {\i+0.5})--({\n+0.5}, {\i+0.5});
	}
	\draw [very thick, rounded corners=0.01, line cap=round, color=darkgray] (0.5, 0.5)--({\n+0.5}, {\n+0.5});
	\plotperm{#1};
}
\newcommand{\plotpermdyckpath}[1]{
	\draw [very thick, rounded corners=0.01, line cap=round] (0.5,0.5)
	\foreach \step in {#1} {
		\ifnum\step=1
			-- ++(0,1)
		\else
			-- ++(1,0)
		\fi
	};
}
\newcommand{\plotdyckpath}[1]{
	\draw [very thick, rounded corners=0.01, line cap=round] (0.5,0)
	\foreach \step in {#1} {
		\ifnum\step=1
			-- ++(1,1)
		\else
			-- ++(1,-1)
		\fi
	};
}
\newcommand{\etaright}[1]
	{
		\begin{tikzpicture}[scale=.2, anchor=base, baseline]
			\node[inner sep=0pt] at (0,0){$#1$};
			\useasboundingbox (current bounding box.south west) rectangle (current bounding box.north east);
			\draw [thin] (-0.3,1.4)--(0.1,1.4); 
			\draw [fill, ultra thin] (0.4,1.4)--(0.1,1.25)--(0.1,1.55)--(0.4,1.4); 
		\end{tikzpicture}
	}
\newcommand{\etaleft}[1]
	{
		\begin{tikzpicture}[scale=.2, anchor=base, baseline]
			\node[inner sep=0pt] at (0,0){$#1$};
			\useasboundingbox (current bounding box.south west) rectangle (current bounding box.north east);
			\draw [thin] (-0.2,1.4)--(0.3,1.4); 
			\draw [fill, ultra thin] (-0.5,1.4)--(-0.2,1.25)--(-0.2,1.55)--(-0.5,1.4); 
 		\end{tikzpicture}
	}
\newcommand{\etaleftright}[1]
	{
		\begin{tikzpicture}[scale=.2, anchor=base, baseline]
			\node[inner sep=0pt] at (0,0){$1$};
			\useasboundingbox (current bounding box.south west) rectangle (current bounding box.north east);
			\draw [thin] (-0.2,1.4)--(0.1,1.4); 
			\draw [fill, ultra thin] (-0.5,1.4)--(-0.2,1.25)--(-0.2,1.55)--(-0.5,1.4); 
			\draw [fill, ultra thin] (0.4,1.4)--(0.1,1.25)--(0.1,1.55)--(0.4,1.4); 
		\end{tikzpicture}
	}
\newcommand{\fnmatrix}[2]{\text{$\left(\text{\begin{footnotesize}$\begin{array}{#1}#2\end{array}$\end{footnotesize}}\right)$}}
\newcommand{\fnarray}[2]{\text{$\text{\begin{footnotesize}$\begin{array}{#1}#2\end{array}$\end{footnotesize}}$}}
\newcommand{\eval}[2][\right]{\relax\ifx#1\right\relax \left.\fi#2#1\rvert}
\title{\sc Rationality For Subclasses of $321$-Avoiding Permutations}
\author{
	\begin{tabular}{cc}
        Michael Albert\footnote{Albert, Ru\v{s}kuc, and Vatter were partially supported by EPSRC via the grant EP/J006440/1.}&Robert Brignall\\
		{\small Department of Computer Science}&{\small Department of Mathematics and Statistics}\\[-3pt]
		{\small University of Otago}&{\small The Open University}\\[-3pt]
		{\small Dunedin, New Zealand}&{\small Milton Keynes, England UK}\\[20pt]
        Nik Ru\v{s}kuc\footnotemark[\value{footnote}]&Vincent Vatter\footnotemark[\value{footnote}]\footnote{Vatter's research was partially supported by the National Science Foundation under Grant Number DMS-1301692 and the National Security Agency under Grant Number H98230-16-1-0324. The United States Government is authorized to reproduce and distribute reprints not-withstanding any copyright notation herein.}\\
		{\small School of Mathematics and Statistics}&{\small Department of Mathematics}\\[-3pt]
		{\small University of St Andrews}&{\small University of Florida}\\[-3pt]
		{\small St Andrews, Scotland UK}&{\small Gainesville, Florida USA}\\[20pt]
	\end{tabular}
}
\date{}
\begin{document}
\maketitle


\pagestyle{main}

\begin{abstract}
We prove that every proper subclass of the $321$-avoiding permutations that is defined either by only finitely many additional restrictions or is well-quasi-ordered has a rational generating function. To do so we show that any such class is in bijective correspondence with a regular language. The proof makes significant use of formal languages and of a host of encodings, including a new mapping called the panel encoding that maps languages over the infinite alphabet of positive integers avoiding certain subwords to languages over finite alphabets.
\end{abstract}

\section{Introduction}

It has been known since 1968, when the first volume of Knuth's \emph{The Art of Computer Programming}~\cite{knuth:the-art-of-comp:1} was published, that the $312$-avoiding permutations and the $321$-avoiding permutations are both enumerated by the Catalan numbers, and thus have algebraic generating functions.  At least nine essentially different bijections between these two permutation classes have been devised in the intervening years, as surveyed by Claesson and Kitaev~\cite{claesson:classification-:}. In one such bijection (shown in Figure~\ref{fig-bij-Dyck} and first given in this non-recursive form by Krattenthaler~\cite{krattenthaler:permutations-wi:}) we obtain Dyck paths from permutations of both types by drawing a path above their left-to-right maxima (an entry is a \emph{left-to-right maximum} if it is greater than every entry to its left).

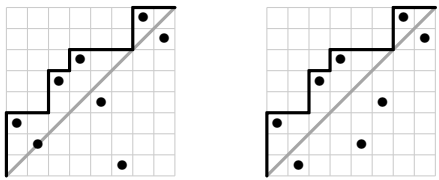
\begin{figure}
\begin{footnotesize}
\begin{center}
\begin{tabular}{ccc}
	\begin{tikzpicture}[scale=.28]
	    \plotpermdyckgrid{3,2,5,6,4,1,8,7};
		\plotpermdyckpath{1,1,1,-1,-1,1,1,-1,1,-1,-1,-1,1,1,-1,-1};
    \end{tikzpicture} 
&\quad\quad&
    \begin{tikzpicture}[scale=.28]
	    \plotpermdyckgrid{3,1,5,6,2,4,8,7};
		\plotpermdyckpath{1,1,1,-1,-1,1,1,-1,1,-1,-1,-1,1,1,-1,-1};
    \end{tikzpicture} 
\end{tabular}
\end{center}
\end{footnotesize}
\caption{The bijections to Dyck paths from $312$-avoiding permutations (left) and $321$-avoiding permutations (right). Knowing the positions and values of the left to right maxima, the remaining elements can be added in a unique fashion to avoid $312$, respectively $321$.
}
\label{fig-bij-Dyck}
\end{figure}

\begin{figure}
\begin{center}
	\begin{tikzpicture}[scale=1]
		\node (1) at (0,0) {$1$};
		\node (12) at (-1,1) {$12$};
		\node (21) at (1,1) {$21$};
		\node (123) at (-2,2) {$123$};
		\node (132) at (-1,2) {$132$};
		\node (213) at (0,2) {$213$};
		\node (231) at (1,2) {$231$};
		\node (321) at (2,2) {$321$};
		\draw [thick, line cap=round]  (1)--(12)--(1)--(21);
		\draw [thick, line cap=round] (12)--(123)--(12)--(132)--(12)--(213)--(12)--(231)--(12);
		\draw [thick, line cap=round]  (21)--(132)--(21)--(213)--(21)--(231)--(21)--(321);
	\end{tikzpicture}
\quad\quad
	\begin{tikzpicture}[scale=1]
		\node (1) at (0,0) {$1$};
		\node (12) at (-1,1) {$12$};
		\node (21) at (1,1) {$21$};
		\node (123) at (-2,2) {$123$};
		\node (132) at (-1,2) {$132$};
		\node (213) at (0,2) {$213$};
		\node (231) at (1,2) {$231$};
		\node (312) at (2,2) {$312$};
		\draw [thick, line cap=round]  (1)--(12)--(1)--(21);
		\draw [thick, line cap=round]  (12)--(123)--(12)--(132)--(12)--(213)--(12)--(231)--(12)--(312);
		\draw [thick, line cap=round]  (21)--(132)--(21)--(213)--(21)--(231)--(21)--(312);
	\end{tikzpicture}
\end{center}
\caption{The Hasse diagrams of the $312$-avoiding (left) and $321$-avoiding (right) permutations.}
\label{fig-Hasse}
\end{figure}
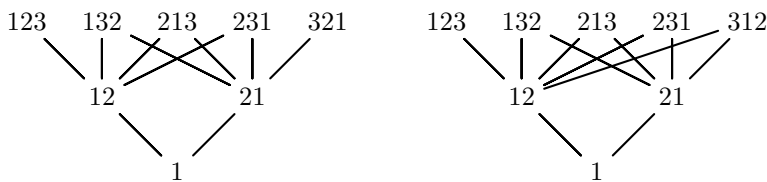

Despite their equinumerosity, there are fundamental differences between these two classes. Indeed, Miner and Pak~\cite{miner:the-shape-of-ra:} make a compelling argument that there are so many different bijections between these two classes precisely \emph{because} they are so different, and thus there can be no ``ultimate'' bijection. In particular, both sets carry a natural ordering with respect to the containment of permutations (defined below) but they \emph{are not} isomorphic as partially ordered sets. Indeed, this can be seen by examining the first three levels of their Hasse diagrams, drawn in Figure~\ref{fig-Hasse}.

A more striking difference between the two classes is that the $321$-avoiding permutations contain infinite antichains (see Section~\ref{sec-wpo}), while the $312$-avoiding permutations do not. Following the standard terminology, we say that a permutation class without infinite antichains is  \emph{well-quasi-ordered}.

From a structural perspective, the avoidance of $312$ imposes severe restrictions on permutations: the entries to the left of the minimum must lie below the entries to the right of this minimum. This restricted structure is known to imply that proper subclasses of the $312$-avoiding permutations are very well-behaved: there are only countably many such subclasses, and as Albert and Atkinson~\cite{albert:simple-permutat:} proved in their work on the substitution decomposition, each has a rational generating function. (Mansour and Vainshtein~\cite{mansour:restricted-132-:2001} had proved this rationality result for proper subclasses classes defined by a single additional restriction earlier.)

The $321$-avoiding permutations also have a good deal of structure: their entries can be partitioned into two increasing subsequences. However, this property has proved much more difficult to work with.  In particular, as noted above, there are infinite antichains of $321$-avoiding permutations, so there are uncountably many proper subclasses of this class---in fact uncountably many subclasses with pairwise distinct generating functions.  By an elementary counting argument, \emph{some} of these proper subclasses must have non-rational (indeed, also non-algebraic and non-D-finite) generating functions. 


Because $\Av(321)$ is not well-quasi-ordered, any result analogous to the one mentioned for $312$-avoiding permutations (which are, to repeat, well-quasi-ordered) must be more discerning as to the subclasses considered. We develop a methodology for working with arbitrary subclasses of $\Av(321)$ and show how to apply it to two natural general families:  subclasses defined by imposing finitely many additional forbidden patterns and subclasses that are well-quasi-ordered. Our main result shows that either of these conditions is sufficient to guarantee the rationality of generating functions.  

For the rest of the introduction, we review the formal definitions of permutation containment and permutation classes. We generally represent permutations in one line notation as sequences of positive integers. We define the \emph{length} of the permutation $\pi$, denoted $|\pi|$, to be the length of the corresponding sequence, i.e., the cardinality of the domain of $\pi$. Given permutations $\pi$ and $\sigma$, we say that $\pi$ \emph{contains} $\sigma$, and write $\sigma\le\pi$, if $\pi$ has a subsequence $\pi(i_1)\cdots\pi(i_{|\sigma|})$ of the same length as $\sigma$ that is \emph{order isomorphic} to $\sigma$ (i.e., $\pi(i_s) < \pi(i_t)$ if and only if $\sigma(s) < \sigma(t)$ for all $1\le s,t\le |\sigma|$); otherwise, we say that $\pi$ \emph{avoids} $\sigma$.  If $\pi$ contains $\sigma$ we also say that $\sigma$ is a \emph{subpermutation} of $\pi$ particularly in contexts where we have a specific embedding (i.e., set of indices) in mind. Containment is a partial order on permutations. For example, $\pi=251634$ contains $\sigma=4123$, as can be seen by considering the subsequence $\pi(2)\pi(3)\pi(5)\pi(6)=5134$.  A collection of permutations $\C$ is a \emph{permutation class} if it is closed downwards in this order; i.e., if $\pi\in\C$ and $\sigma\le\pi$, then $\sigma\in\C$. 

For any permutation class $\C$ there is a unique antichain $B$ (in the set of all permutations) such that
\[
\C=\Av(B)=\{\pi\st \pi\text{ avoids all } \beta \in B\}.
\]
This antichain, consisting of the minimal permutations \emph{not} in $\C$, is called the \emph{basis} of $\C$.  If $B$ happens to be finite, we say that $\C$ is \emph{finitely based}.  For non-negative integers $n$, we denote by $\C_n$ the set of permutations in $\C$ of length $n$, and refer to
\[
\sum_{n} |\C_n|x^n=\sum_{\pi\in\C} x^{|\pi|}
\]
as the \emph{generating function} of $\C$. The goal of this paper is to establish the following.

\begin{theorem}
\label{thm-321-rational}
If a proper subclass of the $321$-avoiding permutations is finitely based or well-quasi-ordered then it has a rational generating function.
\end{theorem}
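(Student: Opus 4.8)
The plan is to put $\Av(321)$ in length‑preserving bijection with a language of words, transport the containment order to a combinatorial order on words, compress the infinite alphabet that arises down to a finite one, and then invoke the classical fact that a downward‑closed language over a finite alphabet is regular and hence has a rational generating function; properness together with either finite basis or well‑quasi‑order is exactly what makes the compression step work. First I would encode $\Av(321)$ by words. A $321$‑avoiding permutation $\pi$ of length $n$ is determined by the positions and values of its left‑to‑right maxima, the remaining entries being forced into increasing order — this is the content of the bijection of Figure~\ref{fig-bij-Dyck}. Reading $\pi$ from left to right, emit a word $e(\pi)$ of length $n$ over $\{\bullet\}\cup\mathbb{P}$: write $\bullet$ at each left‑to‑right maximum, and at each non‑left‑to‑right‑maximum write the number $d\in\mathbb{P}$ of strictly larger left‑to‑right maxima lying to its left (its \emph{depth}). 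One checks that $e$ is a bijection from $\Av(321)$ onto an explicitly describable language $\L$, and — this is the crucial bookkeeping — that $\sigma\le\pi$ precisely when $e(\sigma)$ is obtained from $e(\pi)$ by deleting some letters and lowering others: deleting a non‑maximum simply deletes its letter, whereas deleting a left‑to‑right maximum removes a $\bullet$, lowers the depths of the entries it dominated, and promotes to $\bullet$ any whose depth reaches $0$. Thus a subclass $\C\subseteq\Av(321)$ is carried to $L_{\C}=e(\C)$, downward closed under this ``generalized subword order'' $\preceq$, and $\mathrm{GF}(\C)$ equals the length generating function of $L_{\C}$.

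Over the infinite alphabet $\mathbb{P}$ the language $\L$ is not regular, as it must be, since $\Av(321)$ has non‑rational — indeed non‑D‑finite — subclasses. The device to bypass this is a \emph{panel encoding}: a length‑preserving, $\preceq$‑respecting rewriting that converts any word avoiding a prescribed family of ``oscillating'' subwords into a word over a \emph{fixed finite} alphabet $\Sigma$, under which $\preceq$ becomes the ordinary scattered‑subword order on $\Sigma^*$. Everything then hinges on a structural lemma: \emph{if $\C\subsetneq\Av(321)$ is finitely based or well‑quasi‑ordered, then the words of $L_{\C}$ avoid the prescribed subwords.} In the well‑quasi‑ordered case, were the governing parameter unbounded on $\C$, the permutations realizing its large values would, on closing downward, force an entire infinite antichain of $\Av(321)$ into $\C$, contradicting well‑quasi‑order. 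In the finitely based case, properness gives $\C\subseteq\Av(321,\beta)$ for some $\beta\in\Av(321)$, and one argues that the patterns driving the parameter up occur in sum‑closed families, so that containing them without bound is incompatible with avoiding the fixed pattern $\beta$.

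With $\widehat{L_{\C}}\subseteq\Sigma^*$ now over a finite alphabet and still downward closed under the scattered‑subword order, Higman's lemma applies: that order is a well‑quasi‑order, so $\Sigma^*\setminus\widehat{L_{\C}}$ has only finitely many minimal words and is therefore a finite union of languages of the shape $\Sigma^* a_1\Sigma^* a_2\cdots a_k\Sigma^*$. Complementing, $\widehat{L_{\C}}$ is regular (Haines's theorem), and a regular language has a rational length generating function (say, by the transfer‑matrix method applied to a finite automaton). Since $e$ and the panel encoding are length‑preserving bijections, $\mathrm{GF}(\C)=\sum_{w\in\widehat{L_{\C}}}x^{|w|}$ is rational, proving Theorem~\ref{thm-321-rational}.

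The Higman–Haines endgame is routine; the work lies in making it applicable. Calibrating the order $\preceq$ is delicate because deleting a left‑to‑right maximum reshuffles which later entries are maxima, and the panel encoding must collapse an unbounded alphabet while simultaneously preserving length and respecting $\preceq$. But the genuine obstacle is the lemma of the second step — isolating the correct parameter and showing that properness together with either finite basis or well‑quasi‑order bounds it. That lemma is where both hypotheses of the theorem are actually used, and it amounts to a structural classification of which proper subclasses of $\Av(321)$ are wild: the very wildness responsible for $\Av(321)$ admitting non‑rational, indeed non‑D‑finite, subclasses.
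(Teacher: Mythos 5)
The fatal gap is the asserted existence of a panel encoding that maps words (avoiding the prescribed subwords) into a finite alphabet $\Sigma$, preserves length, and ``under which $\preceq$ becomes the ordinary scattered-subword order on $\Sigma^*$.'' No such encoding can exist, and the argument you build on it proves too much. Suppose your $\eta$ had that property, and take $\beta$ to be one of the double-ended fork permutations of Section~\ref{sec-wpo} (so $\beta\in U$). Then $\C=\Av(321,\beta)$ is a finitely based proper subclass, so by your Step~2 (the analogue of Proposition~\ref{prop-avoid-shifts}) its encoded language $\widehat{L_\C}\subseteq\Sigma^*$ is defined, and by your order-isomorphism claim $\widehat{L_\C}$ is downward closed under the scattered-subword order on $\Sigma^*$. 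Higman's Lemma then makes $\widehat{L_\C}$ well-quasi-ordered, and transporting back along $e$ and $\eta$ this would make $\C$ well-quasi-ordered under permutation containment. But $\Av(321,\beta)$ contains the infinite antichain $U\setminus\{\beta\}$, so it is not well-quasi-ordered. The ``routine'' Higman--Haines endgame is therefore not merely unverified but impossible: it would deliver, as a side effect, a false statement about precisely the finitely based half of Theorem~\ref{thm-321-rational}.

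The paper's own $\eta_c$ is emphatically \emph{not} such an order isomorphism; it is a bare injection. Deleting letters from $\eta_c(w)$ generally destroys validity (the counts of decorated letters across a $\#$ go out of sync), and even when the result is valid it need not decode to a subpermutation of the original. The paper therefore never obtains regularity from downward closure. It establishes regularity of $\L_c^\eta$ and $\G_c^\eta$ by explicit automata (Propositions~\ref{prop-regularity-encoding} and~\ref{prop-greediness-permutations}), and it detects containment of a fixed $\beta$ by nondeterministically \emph{marking} $|\beta|$ letters and running a chain of transducers, panel encoding $\to$ domino encoding $\to$ Dyck path, comparing against the singleton Dyck word of $\beta$ (Propositions~\ref{prop-panel-to-domino}, \ref{prop-domino-to-Dyck}, \ref{prop-single-base-regular}). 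Higman's Lemma appears only inside Theorem~\ref{thm-wqo-fin-int}, to prove the combinatorial criterion that a subclass is wqo iff it meets $U$ finitely, which is then used to supply a finite relative basis inside $\W_q$; it is never invoked as a regularity-producing device. Your depth encoding is also not the paper's (which uses staircase griddings and the omnibus/domino encodings), but that is a secondary divergence; the decisive issue is that the order-compressing panel encoding your argument hinges on cannot exist, and working around its absence is where nearly all of the paper's labour is spent.
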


In~\cite{bousquet-melou:rational-and-al:} Bousquet-M\'elou writes
\begin{quote}
``for almost all families of combinatorial objects with a rational [generating function], it is easy to foresee that there will be a bijection between these objects and words of a regular language''.
\end{quote}
In proving Theorem~\ref{thm-321-rational} we indeed adopt an approach via regular languages. We in fact encode permutations as words using several different encodings. We begin by introducing the \emph{domino encoding} that records the relative positions of entries in pairs of adjacent cells in a staircase gridding. After that we combine this information and encode each $321$-avoiding permutation as a word, say $w$, over the positive integers $\mathbb{P}$ satisfying the additional condition $w(i+1)\le w(i)+1$ for all relevant indices $i$ (throughout we denote by $w(i)$ the $i\th$ letter of the word $w$). We then show that for any proper subclass, $\C$, of $321$-avoiding permutations there is some positive integer $c$ such that the encoding of every permutation in $\C$ avoids (as a subword) every shift of the word $(12\cdots c)^c$, i.e. all words $(i(i+1)\cdots (i+c-1))^c$ for $i\in\mathbb{P}$. The true key to our method is the \emph{panel encoding} $\eta_c$, which translates languages not containing shifts of $(12\cdots c)^c$ to languages over \emph{finite} alphabets. A careful analysis of the interplay between panel encodings, domino encodings, and the classical encodings by Dyck paths (from Figure~\ref{fig-bij-Dyck}) along with a technique called marking establishes the regularity of various images under $\eta_c$, and this completes the proof of Theorem~\ref{thm-321-rational}.

We assume throughout that the reader has some familiarity with the basics of regular languages, as provided by Sakarovitch~\cite{Sakarovitch:Elements-of-aut:}; for a more combinatorial approach we refer the reader to Bousquet-M\'elou~\cite{bousquet-melou:rational-and-al:} or Flajolet and Sedgewick~\cite[Section I.4 and Appendix A.7]{flajolet:analytic-combin:}. The notation used is mostly standard. Herein a \emph{subword} of the word $w$ is any subsequence of its entries while a \emph{factor} is a contiguous subsequence. Given a set of letters $X$ and a word $w$ we denote by $\eval{w}^X$ the \emph{projection} of $w$ onto $X$, i.e., the subword of $w$ formed by its letters in $X$. Finally, we denote the empty word by $\epsilon$.

\section{Staircase Griddings}
\label{sec-staircase}


A \emph{staircase gridding} of a $321$-avoiding permutation $\pi$ is a partition of its entries into \emph{cells} labelled by the positive integers satisfying four properties:
\begin{itemize}
\item the entries in each cell are increasing,
\item for $i\ge 1$, all entries in the $(2i)\th$ cell lie to the right of those in the $(2i-1)\first$ cell,
\item for $i\ge 1$, all entries in the $(2i+1)\first$ cell lie above those in the $(2i)\th$ cell, and
\item if $j\ge i+2$ then all entries in the $j\th$ cell lie above and to the right of those in the $i\th$ cell.
\end{itemize}
Staircase griddings have been used extensively in the study of $321$-avoiding permutations, for instance in ~\cite{albert:growth-rates-fo:,albert:generating-and-:,albert:the-complexity-:,guillemot:pattern-matchin:} and represent the fundamental objects of consideration here. We denote by $\pi^\gridded$ a particular staircase gridding of the $321$-avoiding permutation $\pi$.

Every $321$-avoiding permutation has at least one staircase gridding and indeed, we can identify a preferred staircase gridding of every such permutation: a staircase gridding of the $321$-avoiding permutation $\pi$ is \emph{greedy} if the first cell contains as many entries as possible, and subject to this, the second cell contains as many entries as possible, and so on. Figure~\ref{fig-example-permutations-decomposition} provides an example of a greedy staircase gridding. 

It is easy to construct greedy staircase griddings in the following iterative manner. The entries of the first cell are the maximum increasing prefix $\tau$ of $\pi$. Those of the second cell are then the maximum increasing sequence in $\pi\setminus\tau$ whose values form an initial segment of the values occurring in $\pi \setminus \tau$. Thereafter we continue alternately taking a maximum increasing prefix and then a maximum increasing sequence of values forming an initial segment of the values remaining.

\begin{figure}
\begin{center}
    \begin{tikzpicture}[scale=.25]
		\plotperm{2,3,1,4,7,8,5,11,6,9,12,10,14,13,15};
		\plotpermbox{1}{1}{2}{6}; 
		\plotpermbox{3}{1}{9}{6}; 
		\plotpermbox{3}{7}{9}{11}; 
		\plotpermbox{10}{7}{13}{11}; 
		\plotpermbox{10}{12}{13}{15}; 
		\plotpermbox{14}{12}{15}{15}; 
		\node [below right] at ({0+0.25},{6+0.65}) {\scriptsize $1$};
		\node [below right] at ({2+0.25},{11+0.65}) {\scriptsize $3$};
		\node [below right] at ({9+0.25},{15+0.65}) {\scriptsize $5$};
		\node [above left] at ({10-0.25},{0+0.35}) {\scriptsize $2$};
		\node [above left] at ({14-0.25},{6+0.35}) {\scriptsize $4$};
		\node [above left] at ({16-0.25},{11+0.35}) {\scriptsize $6$};
    \end{tikzpicture} 
\end{center}
\caption{The greedy staircase gridding of the $321$-avoiding permutation $2\ 3\ 1\ 4\ 7\ 8\ 5\ 11\ 6\ 9\ 12\ 10\ 14\ 13\ 15$.}
\label{fig-example-permutations-decomposition}
\end{figure}
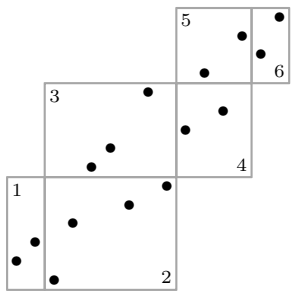

The relative position of two entries in a $321$-avoiding permutation $\pi$ is completely determined by the numbers given to their cells in any staircase gridding, unless these numbers are consecutive. In the case of cells which lie next to each other horizontally we consider their entries as being \emph{ordered from bottom to top}, and in the case of cells which lie next to each other vertically, from \emph{left to right}. Observe that this gives us two orders on the entries of a given cell (except the first), but that the two orders in fact coincide. With this ordering in mind, we formulate two conditions that characterise greedy staircase griddings:
\begin{enumerate}[label=\textup{(G\arabic*)}, leftmargin=*, widest=1]
\item For all $i\ge 1$ the first entry in the $(i+1)\first$ cell occurs before the first entry of the $(i+2)\nd$ cell.
\item For all $i\ge 1$ the first entry in the $(i+1)\first$ cell occurs before the last entry of the $i\th$ cell.
\end{enumerate}
These restrictions, or rather how they can fail, are depicted in Figure~\ref{fig-G1-G2}. It is important for later to note that these conditions can be tested by inspecting only the first and last entries of each cell.

\begin{figure}
\begin{center}
    \begin{tikzpicture}[scale=.25]
		\draw [darkgray, thick, pattern=north west lines, pattern color=darkgray] (0,-4)--(0,4)--(2,4)--(2,2)--(4,2)--(4,0)--(2,0)--(2,-4)--cycle;
		\draw [darkgray, very thick, rounded corners=0.01, line cap=round] (-4,-6)--(-4,0)--(6,0);
		\draw [darkgray, very thick, rounded corners=0.01, line cap=round] (0,-6)--(0,4)--(6,4);
		\draw [darkgray, very thick, rounded corners=0.01, line cap=round] (4,-4)--(4,6);
		\draw [darkgray, very thick, rounded corners=0.01, line cap=round] (-4,-4)--(4,-4);
		\absdot{(2,2)}{};
		\node [above left] at (4,0) {$i+2$};
    \end{tikzpicture} 
\quad\quad
    \begin{tikzpicture}[scale=.25]
		\draw [darkgray, thick, pattern=north west lines, pattern color=darkgray] (-4,0)--(-4,2)--(0,2)--(0,4)--(2,4)--(2,2)--(4,2)--(4,0)--cycle;
		\draw [darkgray, very thick, rounded corners=0.01, line cap=round] (-6,-4)--(-4,-4)--(-4,4)--(6,4);
		\draw [darkgray, very thick, rounded corners=0.01, line cap=round] (0,-4)--(0,6);
		\draw [darkgray, very thick, rounded corners=0.01, line cap=round] (-6,0)--(4,0)--(4,6);
		\draw [darkgray, very thick, rounded corners=0.01, line cap=round] (-4,-4)--(0,-4);
		\absdot{(2,2)}{};
		\node [above left] at (4,0) {$i+2$};
    \end{tikzpicture}
\quad\quad
    \begin{tikzpicture}[scale=.25]
		\draw [darkgray, thick, pattern=north west lines, pattern color=darkgray] (-4,-2)--(-4,0)--(0,0)--(0,4)--(2,4)--(2,-2)--(4,-2)--(4,-4)--(0,-4)--(0,-2)--cycle;
		\draw [darkgray, very thick, rounded corners=0.01, line cap=round] (-4,-6)--(-4,0)--(6,0);
		\draw [darkgray, very thick, rounded corners=0.01, line cap=round] (0,-6)--(0,4)--(6,4);
		\draw [darkgray, very thick, rounded corners=0.01, line cap=round] (4,-4)--(4,6);
		\draw [darkgray, very thick, rounded corners=0.01, line cap=round] (-4,-4)--(4,-4);
		\absdot{(2,-2)}{};
		\node [above left] at (4,-4) {$i+1$};
    \end{tikzpicture} 
\quad\quad
    \begin{tikzpicture}[scale=.25]
		\draw [darkgray, thick, pattern=north west lines, pattern color=darkgray] (-4,0)--(-4,4)--(-2,4)--(-2,2)--(4,2)--(4,0)--(0,0)--(0,-4)--(-2,-4)--(-2,0)--cycle;
		\draw [darkgray, very thick, rounded corners=0.01, line cap=round] (-6,-4)--(-4,-4)--(-4,4)--(6,4);
		\draw [darkgray, very thick, rounded corners=0.01, line cap=round] (0,-4)--(0,6);
		\draw [darkgray, very thick, rounded corners=0.01, line cap=round] (-6,0)--(4,0)--(4,6);
		\draw [darkgray, very thick, rounded corners=0.01, line cap=round] (-4,-4)--(0,-4);
		\absdot{(-2,2)}{};
		\node [above left] at (0,0) {$i+1$};
    \end{tikzpicture}
\end{center}
\caption{The four types (due to parity) of failures of (G1) and (G2). Here the hatched regions indicate positions where entries do not lie. Within the cell of the indicated entry these serve to identify it as the first entry of its cell. In the two rightmost pictures the hatched region in cell $i+2$ is empty because the gridding is assumed to satisfy (G1).}
\label{fig-G1-G2}
\end{figure}
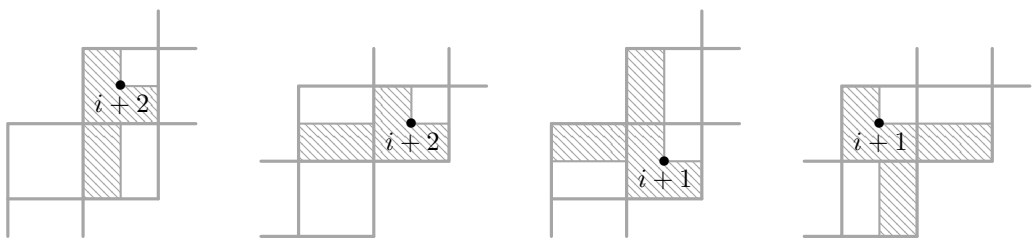

\begin{proposition}
\label{prop-greedy-G1-G2}
A staircase gridding is greedy if and only if it satisfies {\normalfont (G1)} and {\normalfont (G2)}.
\end{proposition}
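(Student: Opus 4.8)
**The plan is to prove both implications by translating (G1) and (G2) into the iterative description of greedy griddings already given in the text.**

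First I would set up notation: write $G$ for a staircase gridding of $\pi$, with cells $C_1, C_2, \ldots$, and let $\tau_i$ denote the set of entries in cell $C_i$, read according to the bottom-to-top / left-to-right order described above (which, as noted, is well-defined and consistent on each cell). Recall that $G$ is greedy precisely when $C_1$ is the maximal increasing prefix, $C_2$ is the maximal increasing sequence among the remaining entries whose values form an initial segment of the remaining values, and so on alternately. The key observation to exploit is that ``maximality'' at stage $i$ can be witnessed by a single obstructing entry: the prefix/value-segment cannot be extended by one more entry precisely because some specific entry of a \emph{later} cell would have had to be included instead.

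For the forward direction ($\Rightarrow$): assuming $G$ is greedy, I would show (G1) and (G2) hold by examining what maximality at stage $i+1$ forces. Since $C_{i+1}$ is chosen greedily (as a maximal prefix of what remains if $i+1$ is odd, or a maximal initial value-segment if $i+1$ is even), the entry that ``blocks'' further extension of $C_{i+1}$ must be an entry that the greedy process is forced to leave for cell $C_{i+1}$ again or push to cell $C_{i+2}$; tracing the staircase geometry, this blocking entry lies in $C_{i+2}$ and sits before the relevant extension, which is exactly the statement that the first entry of $C_{i+1}$ precedes all of $C_{i+2}$, giving (G1); similarly, the fact that $C_i$ was taken maximally at the previous stage means some entry of $C_i$ appears after the first entry of $C_{i+1}$ (otherwise that first entry of $C_{i+1}$ would have been swallowed into $C_i$), giving (G2). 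The parity cases correspond exactly to the four pictures in Figure~\ref{fig-G1-G2}.

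For the converse ($\Leftarrow$): suppose $G$ satisfies (G1) and (G2); I would argue by induction on $i$ that cell $C_i$ of $G$ agrees with the $i$th cell of the greedy gridding. Given that $C_1, \ldots, C_i$ already match the greedy choices, I need to show $C_{i+1}$ is the greedy choice at stage $i+1$. It is contained in what remains (by the gridding axioms and the inductive hypothesis), so I must show it is \emph{maximal}: if it could be extended by one more entry, that entry would have to come from $C_{i+2}$ or a later cell; (G1) rules out it coming before all of $C_{i+2}$ in a way that would let it join $C_{i+1}$, and more carefully, any such extending entry would violate either the increasing-prefix/initial-segment structure forced by (G1) or the ``followed by an entry of $C_i$'' condition of (G2). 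Here I would lean on the remark that (G1) and (G2) are testable using only the first and last entries of each cell, since the greedy extension is likewise blocked by a boundary entry. The main obstacle, I expect, is the bookkeeping of the parity-dependent geometry --- horizontal versus vertical adjacency of consecutive cells swaps the roles of ``prefix'' and ``initial value-segment'' and of ``before/after'' versus ``below/above'' --- so the cleanest write-up handles one parity in detail and notes that the other follows by the inverse symmetry $\pi \mapsto \pi^{-1}$, which interchanges the two types of steps in the staircase.
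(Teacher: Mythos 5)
Your overall strategy matches the paper's: the forward direction works by showing that a failure of (G1) or (G2) would contradict maximality of some cell, and the converse is an induction showing that a gridding satisfying both conditions agrees cell by cell with the greedy one. But there are two concrete issues.

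First, your argument that greediness implies (G1) for index $i$ is misattributed to maximality of $C_{i+1}$. It should be maximality of $C_i$: if (G1) fails at index $i$, then the first entry of $C_{i+2}$ lies (in the shared order) strictly before all of $C_{i+1}$, so it is positioned immediately after $C_i$ while remaining above and to the right of every $C_i$ entry, and hence could have been absorbed into $C_i$ --- precisely what a greedy gridding forbids. Your phrasing about ``the entry that blocks further extension of $C_{i+1}$'' doesn't supply the needed geometric claim (that the offending $C_{i+2}$ entry could be moved into $C_i$), and the sentence ``this blocking entry lies in $C_{i+2}$ and sits before the relevant extension, which is exactly the statement that the first entry of $C_{i+1}$ precedes all of $C_{i+2}$'' asserts an equivalence that is not apparent: the blocking entry concerns the \emph{end} of $C_{i+1}$, while (G1) concerns its \emph{first} entry. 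By contrast, your sketch for (G2) --- maximality of $C_i$ forces an entry of $C_i$ after the first entry of $C_{i+1}$ --- is exactly right.

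Second, the suggestion to dispatch the parity bookkeeping by the inverse symmetry $\pi\mapsto\pi^{-1}$ does not go through as cleanly as you hope. Reflecting a staircase gridding of $\pi$ over the main diagonal produces a gridding of $\pi^{-1}$ whose first non-empty cell is cut off by value, not by position; after re-indexing, that leaves an empty cell $1$ ahead of a non-empty cell $2$, which is never greedy. So the reflected gridding of $\pi$ is not the greedy gridding of $\pi^{-1}$, and the two parities must be handled directly (the paper does so by treating cells $1$ and $2$ explicitly and then noting the remaining cells follow by the same argument applied to the tail of the gridding). The observation preceding the proposition, that (G1) and (G2) are checkable from the first and last entries of each cell, is indeed relevant, but it is used to keep the omnibus/panel machinery finite later on, not to invoke a symmetry here.
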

\begin{proof}
Let $\pi$ be a 321-avoiding permutation, and consider first its greedy staircase gridding. If this gridding were to fail (G1) for some $i\ge 1$, then we see from the two leftmost pictures in Figure~\ref{fig-G1-G2} that the first entry of the $(i+2)\nd$ cell could (and therefore, in a greedy gridding, would) have been placed instead in the $i\th$ cell, a contradiction. On the other hand, if the gridding were to satisfy (G1) but fail (G2) for some $i\ge 1$ then we see from the two rightmost pictures in Figure~\ref{fig-G1-G2} that the first entry of the $(i+1)\first$ cell would have been placed in the $i\th$ cell, another contradiction.

Next consider a staircase gridding $\pi^\gridded$ of $\pi$ that satisfies (G1) and (G2). The condition (G2) implies that the labels of the non-empty cells form an initial segment of $\mathbb{P}$ so we proceed inductively. By definition, the entries of the $1\first$ cell form an initial increasing segment of $\pi$ so we need to show that it is the longest such segment. The next entry of $\pi$ (reading left to right) must lie in the $2\nd$ cell because (G1) shows that the leftmost entry of the $2\nd$ cell lies to the left of all entries of the $3\rd$ cell. Thus this entry is the first entry of the $2\nd$ cell. By (G2) it lies below an entry of the $1\first$ cell, and this implies that the entries of the $1\first$ cell are a maximum initial increasing segment.

Let $\tau$ denote the contents of the $1\first$ cell and consider the entries of the $2\nd$ cell of $\pi$. By the third and fourth requirements for a staircase gridding, all entries of $\pi$ not belonging to the first or second cells lie above those in the second cell. Thus the entries of the second cell form an increasing contiguous sequence by value in $\pi\setminus\tau$ and we must show that it is maximum. Consider the next smallest entry of $\pi\setminus\tau$ by value (if there is no such entry then we are done). As before, (G1) shows that this entry must lie in the $3\rd$ cell, and thus must be the least entry of the $3\rd$ cell. Again, (G2) implies that this entry lies to the left of an entry of the $2\nd$ cell, and thus the contents of the $2\nd$ cell are maximum.

To complete the argument we repeat the reasoning for the $1\first$ and $2\nd$ cells for odd cells and even cells respectively, with suitable modifications, basically referring throughout to the set of entries of $\pi$ that belong to the remaining cells of $\pi^\gridded$.
\end{proof}

Staircase griddings have a pleasing geometric interpretation, as first observed by Waton in his thesis~\cite{waton:on-permutation-:}. First we describe a general construction: given any figure in the plane and permutation $\pi$ we say that $\pi$ can be \emph{drawn} on the figure if we can choose a set $P$ consisting of $n$ points in the figure, no two on a common horizontal or vertical line, label them $1$ to $n$ from bottom to top and then read them from left to right to obtain $\pi$. If this relationship holds between $P$ and $\pi$ we say that $P$ and $\pi$ are \emph{order isomorphic}.

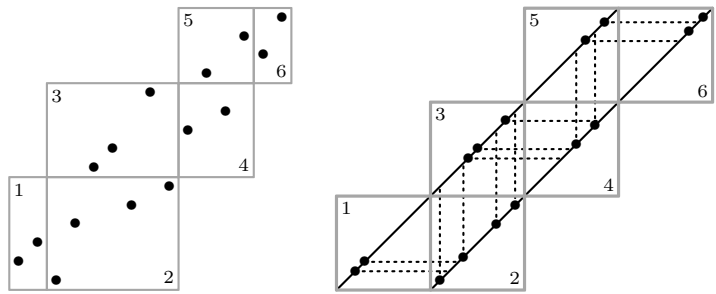
\begin{figure}
\begin{center}
    \begin{tikzpicture}[scale=.25]
		\plotperm{2,3,1,4,7,8,5,11,6,9,12,10,14,13,15};
		\plotpermbox{1}{1}{2}{6}; 
		\plotpermbox{3}{1}{9}{6}; 
		\plotpermbox{3}{7}{9}{11}; 
		\plotpermbox{10}{7}{13}{11}; 
		\plotpermbox{10}{12}{13}{15}; 
		\plotpermbox{14}{12}{15}{15}; 
		\node [below right] at ({0+0.25},{6+0.65}) {\scriptsize $1$};
		\node [below right] at ({2+0.25},{11+0.65}) {\scriptsize $3$};
		\node [below right] at ({9+0.25},{15+0.65}) {\scriptsize $5$};
		\node [above left] at ({10-0.25},{0+0.35}) {\scriptsize $2$};
		\node [above left] at ({14-0.25},{6+0.35}) {\scriptsize $4$};
		\node [above left] at ({16-0.25},{11+0.35}) {\scriptsize $6$};
    \end{tikzpicture} 
\quad
    \begin{tikzpicture}[scale={(15*0.25)/3}]
		\draw [thick, dotted, line cap=round] (0.2,0.2)--(1.2,0.2);
		\draw [thick, dotted, line cap=round] (0.3,0.3)--(1.3,0.3);
		\draw [thick, dotted, line cap=round] (1.1,0.1)--(1.1,1.1);
		\draw [thick, dotted, line cap=round] (1.35,0.35)--(1.35,1.35);
		\draw [thick, dotted, line cap=round] (1.7,0.7)--(1.7,1.7);
		\draw [thick, dotted, line cap=round] (1.9,0.9)--(1.9,1.9);
		\draw [thick, dotted, line cap=round] (1.4,1.4)--(2.4,1.4);
		\draw [thick, dotted, line cap=round] (1.5,1.5)--(2.5,1.5);
		\draw [thick, dotted, line cap=round] (1.8,1.8)--(2.8,1.8);
		\draw [thick, dotted, line cap=round] (2.55,1.55)--(2.55,2.55);
		\draw [thick, dotted, line cap=round] (2.75,1.75)--(2.75,2.75);
		\draw [thick, dotted, line cap=round] (2.65,2.65)--(3.65,2.65);
		\draw [thick, dotted, line cap=round] (2.85,2.85)--(3.85,2.85);
		\draw [thick, line cap=round] (0,0)--(3,3);
		\draw [thick, line cap=round] (1,0)--(4,3);
		\draw [darkgray, very thick, rounded corners=0.01, line cap=round] (0,0)--(0,1)--(1,1)--(1,2)--(2,2)--(2,3)--(4,3);
		\draw [darkgray, very thick, rounded corners=0.01, line cap=round] (0,0)--(1,0)--(1,1)--(2,1)--(2,2)--(3,2)--(3,3);
		\draw [darkgray, very thick, rounded corners=0.01, line cap=round] (1,0)--(2,0)--(2,1)--(3,1)--(3,2)--(4,2)--(4,3);
		\plotpartialperm{0.2/0.2, 0.3/0.3, 1.4/1.4, 1.5/1.5, 1.8/1.8, 2.65/2.65, 2.85/2.85};
		\plotpartialperm{1.1/0.1, 1.35/0.35, 1.7/0.7, 1.9/0.9, 2.55/1.55, 2.75/1.75, 3.75/2.75, 3.9/2.9};
		\node [below right] at ({0-0.05},{1+0.04}) {\scriptsize $1$};
		\node [below right] at ({1-0.05},{2+0.04}) {\scriptsize $3$};
		\node [below right] at ({2-0.05},{3+0.04}) {\scriptsize $5$};
		\node [above left] at ({2+0.05},{0-0.04}) {\scriptsize $2$};
		\node [above left] at ({3+0.05},{1-0.04}) {\scriptsize $4$};
		\node [above left] at ({4+0.05},{2-0.04}) {\scriptsize $6$};
    \end{tikzpicture} 
\end{center}
\caption{The greedy staircase gridding of the permutation $2\ 3\ 1\ 4\ 7\ 8\ 5\ 11\ 6\ 9\ 12\ 10\ 14\ 13\ 15$ from Figure~\ref{fig-example-permutations-decomposition} and a drawing of this gridding on two parallel lines. The dotted lines in the picture on the right are included only to indicate relative positions.}
\label{fig-example-permutations-twolines}
\end{figure}

Suppose that we take our figure to consist of the two parallel rays $y=x$ and $y=x-1$ for $y\ge 0$. From any staircase gridding of a $321$-avoiding permutation $\pi$ we can construct a drawing of $\pi$ on these two parallel rays. First we add vertical and horizontal lines $x=i$ and $y=i$ for all natural numbers $i$, splitting the figure into cells. To draw $\pi$ on this figure, take any staircase gridding of $\pi$ and embed it cell by cell into the corresponding cells of the figure, making sure that the relative order between entries in adjacent cells is preserved. An example is shown in Figure~\ref{fig-example-permutations-twolines}.

\section{Domino and Omnibus Encodings}
\label{sec-omnibus}

From any (not necessarily greedy) staircase gridding we construct dominoes. For each $i\ge 0$, the \emph{$i\th$ domino} consists of the entries of the staircase gridding in the $i\th$ and $(i+1)\first$ cells. We then read the entries of this domino in the order specified before (left-to-right for vertically adjacent cells, and bottom-to-top for horizontally adjacent cells), recording the labels of their cells as the \emph{$i\th$ domino factor} $d_i$. Note that both the $0\th$ and final domino factors encode single cells. An example of dominoes and domino factors is shown in Figure~\ref{fig-example-permutations-decomposition-domino}.

\begin{figure}
\[
	\begin{array}{ccccccc}
	    \begin{tikzpicture}[scale=.22, baseline=0]
		    \plotpartialperm{1/2,2/3};
			\plotpermbox{1}{1}{2}{6}; 
			\node [below right] at ({0+0.25},{6+0.65}) {\scriptsize $1$};
			\draw [->, thick] (0.5,-0.1)--(2.5,-0.1);
	    \end{tikzpicture}
	&
	    \begin{tikzpicture}[scale=.22, baseline=0]
		    \plotpartialpermhollow{1/2,2/3};
		    \plotpartialperm{3/1,4/4,7/5,9/6};
			\plotpermbox{1}{1}{2}{6}; 
			\node [below right] at ({0+0.25},{6+0.65}) {\scriptsize $1$};
			\plotpermbox{3}{1}{9}{6}; 
			\node [above left] at ({10-0.25},{0+0.35}) {\scriptsize $2$};
			\draw [->, thick] (-0.1,0.5)--(-0.1,6.5);
	    \end{tikzpicture}
	&
	    \begin{tikzpicture}[scale=.22, baseline=0]
		    \plotpartialpermhollow{3/1,4/4,7/5,9/6};
		    \plotpartialperm{5/7,6/8,8/11};
			\plotpermbox{3}{1}{9}{6}; 
			\node [above left] at ({10-0.25},{0+0.35}) {\scriptsize $2$};
			\plotpermbox{3}{7}{9}{11}; 
			\node [below right] at ({2+0.25},{11+0.65}) {\scriptsize $3$};
			\draw [->, thick] (2.5,-0.1)--(9.5,-0.1);
	    \end{tikzpicture}
	&
	    \begin{tikzpicture}[scale=.22, baseline=0]
		    \plotpartialpermhollow{5/7,6/8,8/11};
		    \plotpartialperm{10/9,12/10};
			\plotpermbox{3}{7}{9}{11}; 
			\node [below right] at ({2+0.25},{11+0.65}) {\scriptsize $3$};
			\plotpermbox{10}{7}{13}{11}; 
			\node [above left] at ({14-0.25},{6+0.35}) {\scriptsize $4$};
			\draw [->, thick] (1.9,6.5)--(1.9,11.5);
	    \end{tikzpicture}
	&
	    \begin{tikzpicture}[scale=.22, baseline=0]
		    \plotpartialpermhollow{10/9,11/12,12/10,13/14};
		    \plotpartialperm{11/12,13/14};
			\plotpermbox{10}{7}{13}{11}; 
			\node [above left] at ({14-0.25},{6+0.35}) {\scriptsize $4$};
			\plotpermbox{10}{12}{13}{15}; 
			\node [below right] at ({9+0.25},{15+0.65}) {\scriptsize $5$};
			\draw [->, thick] (9.5,5.9)--(13.5,5.9);
	    \end{tikzpicture}
	&
	    \begin{tikzpicture}[scale=.22, baseline=0]
		    \plotpartialpermhollow{11/12,13/14,14/13,15/15};
		    \plotpartialperm{14/13,15/15};
			\plotpermbox{10}{12}{13}{15}; 
			\node [below right] at ({9+0.25},{15+0.65}) {\scriptsize $5$};
			\plotpermbox{14}{12}{15}{15}; 
			\node [above left] at ({16-0.25},{11+0.35}) {\scriptsize $6$};
			\draw [->, thick] (8.9,11.5)--(8.9,15.5);
	    \end{tikzpicture}
	&
	    \begin{tikzpicture}[scale=.22, baseline=0]
		    \plotpartialpermhollow{14/13,15/15};
			\plotpermbox{14}{12}{15}{15}; 
			\node [above left] at ({16-0.25},{11+0.35}) {\scriptsize $6$};
			\draw [->, thick] (13.5,10.9)--(15.5,10.9);
	    \end{tikzpicture}
	\\[5pt]
		11
		&
		211222
		&
		2233232
		&
		33443
		&
		4545
		&
		5656
		&
		66
	\end{array}
\]
\caption[The domino factors (bottom row) corresponding to dominoes (top row) of the gridded permutation from Figure~\ref{fig-example-permutations-decomposition}.]{The domino factors (bottom row) corresponding to dominoes (top row) of the gridded permutation from Figure~\ref{fig-example-permutations-decomposition}. The domino encoding of this permutation is therefore $\newpoint\newpoint\# \newpoint\oldpoint\oldpoint\newpoint\newpoint\newpoint\# \oldpoint\oldpoint\newpoint\newpoint\oldpoint\newpoint\oldpoint\# \oldpoint\oldpoint\newpoint\newpoint\oldpoint\# \oldpoint\newpoint\oldpoint\newpoint\# \oldpoint\newpoint\oldpoint\newpoint\# \oldpoint\oldpoint\#$.}
\label{fig-example-permutations-decomposition-domino}
\end{figure}

We now translate the $i\th$ domino factor $d_i$ of the staircase gridded permutation $\pi^\gridded$ to the alphabet $\{\oldpoint,\newpoint\}$ by replacing occurrences of $i$ by $\oldpoint$ and occurrences of $i+1$ by $\newpoint$, labeling the resulting word $d_i^{\newpoint}$. The \emph{domino encoding}, $\delta$, of the gridded permutation $\pi^\gridded$ is then
\[
	\delta(\pi^\gridded) = d_0^{\newpoint} \#d_1^{\newpoint} \#\cdots\# d_m^{\newpoint} \#,
\]
where $m$ is the last nonempty cell. Recall that the relative position of entries in cells $j$ and $k$ is determined by the cells themselves if $|k-j|\ge 2$. Therefore, as the domino factors completely determine the relative positions between entries of adjacent cells, the domino encoding is an injection (as a mapping from staircase gridded permutations to valid domino encodings).

We also derive a second encoding, the \emph{omnibus encoding}, which again collects the domino factors of $\pi^\gridded$ into a single word but this time by interleaving them with each other. In this encoding, for which the alphabet is the positive integers, each entry corresponds to a single letter and the encoding contains every domino factor as a subword. Formally, this means that we insist that the omnibus encoding, $w$, of $\pi^\gridded$ satisfy the \emph{projection condition}:
\begin{enumerate}[label=($\text{\textup{PC}}$), leftmargin=*, widest=1]
	\item $\eval{w}^{\{i,i+1\}}$ is equal to the $i\th$ domino factor of $\pi^\gridded$ for all $i$.
\end{enumerate}
That is, when we look only at the letters $i$ and $i+1$ of an omnibus encoding we recover the $i\th$ domino factor $d_i$. This rule alone does not determine the encoding uniquely because it does not specify the order in which letters belonging to different domino factors should occur. In particular, if $|j-i|\ge 2$ then the letters $i$ and $j$ ``commute'' in the sense that replacing an $ij$ factor by $ji$ does not change the projections to domino factors. We choose to ``prefer'' letters of larger value moving to the left. It is easy to see that this is equivalent to stipulating that our encoding $w$ satisfy the
\emph{small ascent condition}:
\begin{enumerate}[label=\textup{(SAC)}, leftmargin=*, widest=1]
	\item $w(i+1)\le w(i)+1$ for all relevant indices $i$.
\end{enumerate}

The conditions (PC) and (SAC) together guarantee the existence and uniqueness of the omnibus encoding (of gridded permutations). We prove uniqueness momentarily, after demonstrating existence by showing how to compute the omnibus encoding from the domino factors for the example shown in Figure~\ref{fig-example-permutations-decomposition-domino}. Having written one domino factor $d_i$, in the next row we copy the occurrences of $i+1$, and then insert the occurrences of $i+2$ as far to the left as possible, subject to the requirement that the word in that row is $d_{i+1}$.
\[
	\fnarray{rcccccccccccccccc}{%
		d_0&=&&1&1&&&&&&&&&&&&\\
		d_1&=&2&1&1&2&&&&&&&&&2&&2\\
		d_2&=&2&&&2&3&3&&&&&&&2&3&2\\
		d_3&=&&&&&3&3&4&&&4&&&&3&\\
		d_4&=&&&&&&&4&5&&4&5&&&&\\
		d_5&=&&&&&&&&5&6&&5&6&&&\\
		d_6&=&&&&&&&&&6&&&6&&&\\\cline{3-17}\\[-7.5pt]
		&&2&1&1&2&3&3&4&5&6&4&5&6&2&3&2
	}
\]
Clearly this procedure can be performed for the set of domino factors of any gridded $321$-avoiding permutation $\pi^\gridded$, and thus every such gridded permutation has at least one omnibus encoding. To establish uniqueness, we show that every word of positive integers satisfying the small ascent condition is uniquely determined by its projections to pairs of consecutive integers.

\begin{proposition}
\label{prop-consecutive-is-enough}
If the words $u,w\in\mathbb{P}^\ast$ both satisfy the small ascent condition, $\eval{u}^{\{1\}}=\eval{w}^{\{1\}}$, and $\eval{u}^{\{i,i+1\}}=\eval{w}^{\{i,i+1\}}$ for every positive integer $i$, then $u=w$.
\end{proposition}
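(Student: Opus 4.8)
The plan is to reduce everything to the single claim that $u(1)=w(1)$ and then induct on length. First I would note that $u$ and $w$ have the same multiset of letters: for each value $v$ the number of occurrences of $v$ in $u$ equals the number in $\eval{u}^{\{v,v+1\}}$, which by hypothesis equals the number in $\eval{w}^{\{v,v+1\}}$, hence in $w$. In particular $|u|=|w|$, so the base case $u=w=\epsilon$ is unambiguous. Granting the claim, let $u'$ and $w'$ be obtained by deleting the (equal) first letters. Then $u'$ and $w'$ still satisfy the small ascent condition, since its instances for $u'$ form a sub-list of those for $u$; and they still satisfy all the projection hypotheses, because deleting a common leading letter $c$ from $u$ and $w$ removes a common leading $c$ from each projection onto a set containing $c$ and leaves every other projection unchanged, so $\eval{u'}^{\{1\}}=\eval{w'}^{\{1\}}$ and $\eval{u'}^{\{i,i+1\}}=\eval{w'}^{\{i,i+1\}}$ for all $i$. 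By induction $u'=w'$, and therefore $u=w$.

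So the crux is $u(1)=w(1)$, and this is exactly where the small ascent condition does the work. Since the hypotheses are symmetric in $u$ and $w$, we may assume $a:=w(1)\le b:=u(1)$, and suppose for contradiction that $a<b$; then $b\ge 2$. Since $w$ contains the letter $b$, let $p$ be the position of the first $b$ in $w$, noting $p\ge 2$ because $w(1)=a\ne b$. I claim every letter of $w$ in positions $1,\dots,p-1$ is less than $b$: if $j<p$ were the least index with $w(j)>b$, then $j\ge 2$ (as $w(1)=a<b$), and the small ascent condition gives $w(j-1)\ge w(j)-1\ge b$, while $w(j-1)\ne b$ by minimality of $p$, forcing $w(j-1)>b$ and contradicting minimality of $j$. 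Consequently $w(p-1)<b$ and $w(p-1)\ge w(p)-1=b-1$, so $w(p-1)=b-1$; that is, the first $b-1$ in $w$ precedes the first $b$, and hence $\eval{w}^{\{b-1,b\}}$ begins with the letter $b-1$. But $\eval{u}^{\{b-1,b\}}$ begins with $u(1)=b$. Since $b-1\ne b$ this contradicts $\eval{u}^{\{b-1,b\}}=\eval{w}^{\{b-1,b\}}$, so in fact $a=b$.

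The one point to watch is the degenerate case $b=1$, for which ``$b-1$'' is not a letter; but $1\le a\le b=1$ forces $a=b$ there, so the displayed argument is only ever invoked with $b\ge 2$ and the pair $\{b-1,b\}$ legitimate. I do not expect any real obstacle beyond phrasing the first-letter claim cleanly; everything else is routine bookkeeping. (The hypothesis on $\eval{\cdot}^{\{1\}}$ is carried harmlessly through the induction but is not actually needed for this argument; it would, however, supply the base case in which every letter equals $1$ were one instead to induct on the largest letter appearing.)
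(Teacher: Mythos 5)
Your proof is correct, but it follows a genuinely different route from the paper's. The paper inducts on the size of the sub-alphabet: it shows $\eval{u}^{[i]} = \eval{w}^{[i]}$ for all $i$, with the inductive step observing that (by the small ascent condition) any occurrence of $i+1$ whose left-neighbourhood contains a smaller letter must have a rightmost such letter equal to $i$, so $\eval{u}^{[i+1]}$ is obtained from $\eval{u}^{[i]}$ by inserting the copies of $i+1$ as far left as possible consistent with $\eval{u}^{\{i,i+1\}}$. This argument ties directly into the ``prefer larger letters to the left'' picture that motivates the omnibus encoding. You instead induct on word length, establishing the sharper local claim $u(1)=w(1)$ by contradiction: assuming $w(1)=a<b=u(1)$, you locate the first $b$ in $w$ at some position $p\ge 2$, use the small ascent condition and minimality of $p$ to show all prior letters are $<b$ and in particular $w(p-1)=b-1$, and conclude that $\eval{w}^{\{b-1,b\}}$ begins with $b-1$ while $\eval{u}^{\{b-1,b\}}$ begins with $b$ --- a contradiction; you also correctly dispatch the boundary case $b=1$. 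Your remark that the $\eval{\cdot}^{\{1\}}$ hypothesis is redundant (the number of $1$'s is recoverable from $\eval{\cdot}^{\{1,2\}}$) is also accurate; the paper keeps it only to make the base case of its own induction immediate. Your approach is more elementary and position-by-position; the paper's is more structural and foreshadows the ``greedy insertion'' viewpoint used in constructing omnibus encodings.
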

\begin{proof}
For a positive integer $k$, let $[k] = \{1, 2,\dots, k\}$. We prove inductively that under the hypotheses of the proposition, we have $\eval{u}^{[i]} = \eval{w}^{[i]}$ for all $i\geq 1$. The hypotheses give the base case of $i=1$. Suppose now that $\eval{u}^{[i]} = \eval{w}^{[i]}$ for some $i\ge 1$ and consider any occurrence of $i+1$ in $\eval{u}^{[i+1]}$. If this $i+1$ has any smaller elements to its left, then the rightmost such must equal $i$ owing to the small ascent condition. Therefore $\eval{u}^{[i+1]}$ is formed from $\eval{u}^{[i]}$ by inserting all occurrences of $i+1$ correctly according to $\eval{u}^{\{i,i+1\}}$ and as far to the left as possible subject to this constraint. Since $\eval{w}^{[i+1]}$ is formed from $\eval{w}^{[i]}$ by the same rule and since both $\eval{u}^{[i]} = \eval{w}^{[i]}$ and $\eval{u}^{\{i,i+1\}} = \eval{w}^{\{i,i+1\}}$ it follows that $\eval{u}^{[i+1]} = \eval{w}^{[i+1]}$, completing the proof.
\end{proof}

These facts allow us to define the \emph{omnibus encoding}, $\omega$, from the set of all staircase gridded $321$-avoiding permutation to $\mathbb{P}^\ast$ as the mapping sending $\pi^\gridded$ to the unique word satisfying both the \textup{(PC)} and \textup{(SAC)}. The two languages of most interest to us are then
\begin{eqnarray*}
	\L^{\infty}&=&\{\omega(\pi^\gridded)\st \text{$\pi^\gridded$ is a gridded $321$-avoiding permutation}\} \text{ and}\\
	\G^{\infty}&=&\{\omega(\pi^\gridded)\st \text{$\pi^\gridded$ is a greedily gridded $321$-avoiding permutation}\}.
\end{eqnarray*}
For most of the argument it is easier to ignore the greediness conditions and focus on $\L^\infty$, which has a simple alternative characterisation:
\[
	\L^{\infty}=\{w\in\mathbb{P}^\ast\st \text{$w$ satisfies (SAC)}\}.
\]
Translating the gridding conditions (G1) and (G2) to omnibus encodings, we immediately obtain the following characterisation of the language $\G^{\infty}$.

\begin{observation}
\label{obs-greedy-omegaG}
The word $w\in\mathbb{P}^\ast$ lies in $\G^{\infty}$ if and only if it satisfies the (SAC), i.e., it lies in $\L^{\infty}$, and also the following two conditions:
\begin{enumerate}[label=\textup{($\omega$G\arabic*)}, leftmargin=*, widest=1]
\item For all $i\ge 1$, the first occurrence of $i+1$ occurs before the first occurrence of $i+2$.
\item For all $i\ge 1$, the first occurrence of $i+1$ occurs before the last occurrence of $i$.
\end{enumerate}
\end{observation}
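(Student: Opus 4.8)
The plan is to obtain this by pushing the greediness criterion of Proposition~\ref{prop-greedy-G1-G2} through the projection condition (PC) that defines $\omega$. Write $w=\omega(\pi^\gridded)$ and recall from (PC) that $\eval{w}^{\{i,i+1\}}$ \emph{equals} the $i\th$ domino factor $d_i$ of $\pi^\gridded$ for every $i$. Two elementary consequences do all the work. First, since the entries of the $j\th$ cell are recorded in the same order (the domino order) inside both $d_{j-1}$ and $d_j$, and since projecting $w$ onto a set of letters containing $j$ does not disturb the relative order of the occurrences of $j$, the occurrences of the letter $j$ in $w$ appear in the canonical order of the $j\th$ cell; in particular the \emph{first} occurrence of $j$ in $w$ corresponds to the first entry of the $j\th$ cell of $\pi^\gridded$. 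Second, the relative order in $w$ of an occurrence of $i+1$ and an occurrence of $i+2$ is exactly their relative order in $d_{i+1}=\eval{w}^{\{i+1,i+2\}}$, and similarly for $i$ and $i+1$ via $d_i$.

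Granting this dictionary, the two conditions translate verbatim. Condition (G1) says that in the domino order of the adjacent cells $i+1$ and $i+2$ the first entry of cell $i+1$ precedes every entry of cell $i+2$; since this is a statement purely about $d_{i+1}=\eval{w}^{\{i+1,i+2\}}$, it holds if and only if the first occurrence of $i+1$ in $w$ precedes every occurrence of $i+2$, which is ($\omega$G1). Likewise (G2) --- that the first entry of cell $i+1$ is followed by an entry of cell $i$ --- is a statement about $d_i=\eval{w}^{\{i,i+1\}}$, hence holds if and only if the first occurrence of $i+1$ in $w$ is followed by an occurrence of $i$, which is ($\omega$G2). Combining with Proposition~\ref{prop-greedy-G1-G2}, together with the fact recorded just above that $w\in\L^\infty$ is the omnibus encoding of a unique gridded $321$-avoiding permutation, we conclude that $w\in\G^\infty$ iff $\pi^\gridded$ is greedy iff (G1) and (G2) hold iff ($\omega$G1) and ($\omega$G2) hold.

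The one point requiring care is the first item of the dictionary: that ``first occurrence of $j$ in $w$'' genuinely tracks ``first entry of the $j\th$ cell,'' and that the interleaving used to build $\omega$ never scrambles the relative order of two consecutive letters. Both follow from (PC) together with the remark preceding Proposition~\ref{prop-greedy-G1-G2} that the two orderings induced on a cell coincide, but this should be spelled out rather than merely asserted. I expect no obstacle beyond this bookkeeping; in particular (SAC) plays no role here, since it only constrains the relative order of letters differing by at least $2$, and neither (G1)/(G2) nor ($\omega$G1)/($\omega$G2) says anything about such pairs.
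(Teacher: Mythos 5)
Your argument is correct and is exactly the translation the paper treats as immediate --- the paper offers no proof of this observation, stating only that (G1) and (G2) ``translate'' to ($\omega$G1) and ($\omega$G2) via the projection condition (PC). The one substantive point you identify and justify, namely that the two orderings of a cell's entries coincide so that the first occurrence of $j$ in $w$ unambiguously corresponds to the first entry of cell $j$ whether read through $d_{j-1}$ or $d_j$, is precisely the bookkeeping the paper leaves to the reader.
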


Given any word $w\in\mathbb{P}^\ast$, we define its \emph{$i\th$ domino factor} $d_i$ to be $\eval{w}^{\{i,i+1\}}$, i.e., the subword of $w$ made up of its letters equal to $i$ and $i+1$. In this way, the domino factors of any gridded $321$-avoiding permutation $\pi^\gridded$ are equal to the domino factors of its omnibus encoding $\omega(\pi^\gridded)$. In the same manner, we can define the \emph{domino encoding} of any word $w\in\mathbb{P}^\ast$ as
\[
	\delta(w)=d_0^{\newpoint}\#d_1^{\newpoint}\#\cdots\#d_m^{\newpoint}\#,
\]
where $m$ is the value of the largest letter in $w$.

Therefore given any omnibus encoding $w\in\L^\infty\subseteq\mathbb{P}^\ast$, we can recover the domino factors (or, equivalently, the domino encoding) of the underlying gridded permutation and then, by our previous remarks, reconstruct this gridded permutation. Thus while we initially defined $\L^\infty$ and $\G^\infty$ as images under $\omega$, we now know that they are in fact in one-to-one correspondence with these sets of gridded permutations.

\begin{observation}
\label{obs-omega-bijection}
The mapping $\omega$ is a bijection between the set of gridded (resp., greedily gridded) $321$-avoiding permutations and $\L^\infty$ (resp., $\G^\infty$).
\end{observation}


This observation of course implies that the number of words of length $n$ in $\G^\infty$ is equal to the $n$th Catalan number. The authors asked on MathOverflow~\cite{vatter:a-family-of-wor:} for a simple bijection between (a variant of) this language and another Catalan family (other than staircase griddings). In response, Speyer~\cite{speyer:a-double-gradin:} conjectured a link to the Catalan matroid of Ardila~\cite{ardila:the-catalan-mat:} that was subsequently proved by Stump~\cite{stump:on-a-new-collec:} using Haglund's zeta map~\cite{haglund:the-qt-catalan-:}. Mansour and Shattuck~\cite{Mansour:Chebyshev-polyn:} have since provided several refinements of the enumeration, such as the number of words in the language with a specified number of occurrences of $1$ and $2$.

The domino encoding may appear at first to be superior to the omnibus encoding because the former is defined on the finite alphabet $\{\oldpoint,\newpoint,\#\}$ whereas the latter is defined on the infinite alphabet of positive integers. However, in the context of establishing a regularity result for a subclass $\C\subsetneq\Av(321)$, the domino encoding is of no immediate use. If $\C$ is not finite then it must contain arbitrarily long increasing sequences, and this already implies that the domino encodings of the greedy griddings of members of $\C$ do not form regular language, owing to the condition that the number of $\newpoint$ symbols in the $\{\newpoint, \oldpoint\}$ factor preceding a punctuation mark must equal the number of $\oldpoint$ symbols in the subsequent such factor. Nonetheless, as well as providing a foundation for the omnibus encoding, the domino encoding becomes useful again in the final stages of the proof of Theorem~\ref{thm-321-rational}.

We say that the omnibus encoding is an \emph{entry-to-entry mapping} because every letter of $\omega(\pi)$ corresponds to precisely one entry of $\pi$. The domino encoding is nearly an entry-to-entry mapping because each entry of $\pi$ corresponds to precisely two non-punctuation letters of $\delta(\pi)$. We make frequent, though implicit, use of these correspondences.

The inverse of the omnibus encoding has a natural geometric interpretation, which can be viewed as an infinite version of the encodings defined in~\cite{albert:geometric-grid-:}. Following the notation there we denote the inverse of $\omega$ by $\bij^\gridded$, which is a surjection from $\mathbb{P}^\ast$ to the set of gridded $321$-avoiding permutations, interpreted as equivalence classes of sets of points on the two parallel rays $y=x$ and $y=x-1$ for $y\ge 0$ subdivided into cells by the vertical and horizontal lines $x=i$ and $y=i$ for all integers $i$.

Suppose that the word $w\in\mathbb{P}^\ast$ has length $n$ and choose arbitrary real numbers $0<d_1<\cdots<d_n<1$. For each $1\le i\le n$, take $p_i$ to be the point on the diagonal line segment in the cell numbered by $w(i)$ that is at infinity-norm distance $d_i$ from the lower left corner of this cell. We define $\bij^\gridded(w)$ to be the gridded permutation that is order isomorphic to the gridded set $\{p_1,p_2,\dots,p_n\}$ of points in the plane and we further define $\bij(w)$ to be the permutation obtained from $\bij^\gridded(w)$ by ``forgetting'' the grid lines. It is routine to show that $\bij^\gridded(w)$ does not depend on the particular choice of $d_1, \dots, d_n$, and thus is well-defined. Given any two words $u,w\in\mathbb{P}^\ast$, it is clear from this construction that if $u$ is a subword of $w$ then $\bij(u)\le\bij(w)$. Reframing this observation in terms of the omnibus encoding we obtain the following.

\begin{observation}
\label{obs-order-pres}
Let $\sigma^\gridded$ and $\pi^\gridded$ be gridded $321$-avoiding permutations. If $\omega(\sigma^\gridded)$ is a subword of $\omega(\pi^\gridded)$ then $\sigma\le\pi$.
\end{observation}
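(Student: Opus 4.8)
The plan is to leverage the geometric description of $\bij^\gridded$ (the inverse of the omnibus encoding $\omega$) that was just set up, together with the elementary monotonicity remark that immediately precedes the statement: if $u$ is a subword of $w$ in $\mathbb{P}^\ast$, then $\bij(u)\le\bij(w)$. The whole point is that this remark is essentially what we want, once we translate it back through $\omega$. So the first step is to recall that $\omega$ is a bijection from gridded $321$-avoiding permutations onto $\L^\infty$, with inverse $\bij^\gridded$ (up to forgetting the grid lines, $\bij$). Hence, writing $u=\omega(\sigma^\gridded)$ and $w=\omega(\pi^\gridded)$, we have $\bij(u)=\sigma$ and $\bij(w)=\pi$ — here $\bij$ rather than $\bij^\gridded$ because we only care about the underlying permutations, not their griddings.

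Next I would invoke the hypothesis: $u$ is a subword of $w$. The construction of $\bij^\gridded$ places, for a word of length $n$, one point $p_i$ in the cell numbered $w(i)$ at infinity-norm distance $d_i$ from the lower-left corner of that cell, for an arbitrary increasing choice $0<d_1<\cdots<d_n<1$. If $u=w(i_1)w(i_2)\cdots w(i_k)$ is a subword, realized at the index set $\{i_1<\cdots<i_k\}$, then — choosing the \emph{same} reals $d_{i_1}<\cdots<d_{i_k}$ for the construction of $\bij^\gridded(u)$ that we used for $\bij^\gridded(w)$ — the point set $\{p_{i_1},\dots,p_{i_k}\}$ used to define $\bij^\gridded(u)$ is literally a subset of the point set $\{p_1,\dots,p_n\}$ used to define $\bij^\gridded(w)$. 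A subset of an order-isomorphic-to-$\pi$ point configuration (no two points sharing a row or column) is order isomorphic to a subpermutation of $\pi$; deleting points never creates a $321$ and never changes the relative order of surviving points. Therefore $\bij(u)\le\bij(w)$, i.e.\ $\sigma\le\pi$. Since $\bij^\gridded(w)$ is independent of the choice of the $d_i$, this argument is legitimate.

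I expect essentially no real obstacle here — the statement is a near-immediate corollary of the geometric setup, and the excerpt has in fact already stated the underlying fact (``it is clear from this construction that if $u$ is a subword of $w$ then $\bij(u)\le\bij(w)$''). The only point requiring a modicum of care is the bookkeeping in the previous paragraph: one must be explicit that the $d_i$ may be chosen freely and that restricting to a subword corresponds exactly to restricting the point set to the corresponding subfamily of the $p_i$, so that order isomorphism is inherited. The proof I would write therefore amounts to: (i) apply $\bij$ to both $\omega(\sigma^\gridded)$ and $\omega(\pi^\gridded)$, recovering $\sigma$ and $\pi$; (ii) observe that a subword relation on $\mathbb{P}^\ast$ corresponds under $\bij^\gridded$ to a sub-point-set relation on the plane configurations; (iii) conclude $\sigma\le\pi$ by definition of permutation containment. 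This is a two- or three-sentence argument; no induction, no case analysis, and no appeal to (SAC) or the greediness conditions is needed.
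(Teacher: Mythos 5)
Your proof is correct and takes essentially the same approach as the paper, which presents this as an immediate consequence of the geometric construction of $\bij^\gridded$ and the remark that subwords map to sub-point-sets. The only difference is that you spell out the bookkeeping (choosing the same $d_i$ for the subword) that the paper leaves implicit with the phrase ``it is clear from this construction.''
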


\section{Restricting to a Finite Alphabet}

In order to appeal to the theory of formal languages we must translate the omnibus encoding to a finite alphabet. This---accomplished via the panel encoding---is the topic of the next section. Aside from restricting to a finite alphabet though, some other restriction is needed because $\Av(321)$ does not have a rational generating function. This section introduces a generic family of restrictions on the omnibus encodings in such a way that for any proper subclass of $\Av(321)$ one of the restrictions in the family is satisfied. This will subsequently be shown to be sufficient to enable encodings of finitely based and/or well-quasi-ordered subclasses into regular languages over finite alphabets.


Given a word $w\in\mathbb{P}^\ast$ its \emph{shift by $k$} is defined by
\[
	w^{+k}(i) = w(i) + k
\]
for all indices $i$. An \emph{even shift} is a shift by an even integer. By the definition of $\bij$, it follows immediately that $\bij(w^{+2k})=\bij(w)$, so the image of $\bij$ is unaffected by even shifts. As a consequence of this fact and Observation~\ref{obs-order-pres}, we obtain the following.

\begin{observation}
\label{obs-contain-omnibus-matchings}
Let $\sigma^\gridded$ and $\pi^\gridded$ be gridded $321$-avoiding permutations. If any even shift of $\omega(\sigma^\gridded)$ is a subword of $\omega(\pi^\gridded)$ then $\sigma\le\pi$.
\end{observation}

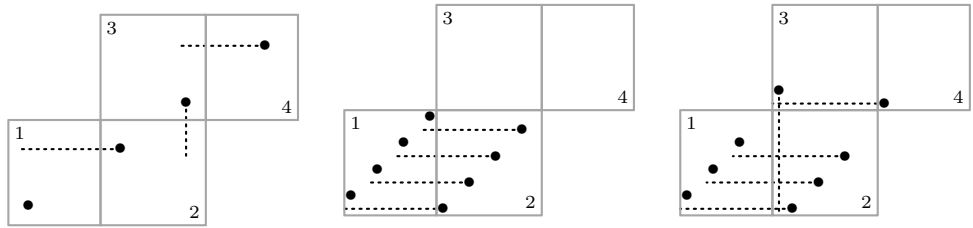
\begin{figure}
\begin{center}
    \begin{tikzpicture}[scale=0.175]
		\draw [thick, dotted, line cap=round] (6,6.28571428571)--(-1.5,6.28571428571);
		\draw [thick, dotted, line cap=round] (11,9.85714285714)--(11,5.78571428571);
		\draw [thick, dotted, line cap=round] (17,14.1428571429)--(10.5,14.1428571429);
		\plotpartialperm{-1/2,6/6.28571428571,11/9.85714285714,17/14.1428571429};
		\plotpermbox{-2}{1}{4}{8}; 
		\plotpermbox{5}{1}{12}{8}; 
		\plotpermbox{5}{9}{12}{16}; 
		\plotpermbox{13}{9}{19}{16}; 
		\node [below right] at ({-3+0.25},{8+0.65}) {\scriptsize $1$};
		\node [below right] at ({4+0.25},{16+0.65}) {\scriptsize $3$};
		\node [above left] at ({13-0.25},{0+0.35}) {\scriptsize $2$};
		\node [above left] at ({20-0.25},{8+0.35}) {\scriptsize $4$};
    \end{tikzpicture} 
\quad
    \begin{tikzpicture}[scale=0.175]
		\draw [thick, dotted, line cap=round] (5,1)--(-2.5,1);
		\draw [thick, dotted, line cap=round] (7,3)--(-0.5,3);
		\draw [thick, dotted, line cap=round] (9,5)--(1.5,5);
		\draw [thick, dotted, line cap=round] (11,7)--(3.5,7);
		\plotpartialperm{-2/2,0/4,2/6,4/8,5/1,7/3,9/5,11/7};
		\plotpermbox{-2}{1}{4}{8}; 
		\plotpermbox{5}{1}{12}{8}; 
		\plotpermbox{5}{9}{12}{16}; 
		\plotpermbox{13}{9}{19}{16}; 
		\node [below right] at ({-3+0.25},{8+0.65}) {\scriptsize $1$};
		\node [below right] at ({4+0.25},{16+0.65}) {\scriptsize $3$};
		\node [above left] at ({13-0.25},{0+0.35}) {\scriptsize $2$};
		\node [above left] at ({20-0.25},{8+0.35}) {\scriptsize $4$};
    \end{tikzpicture} 
\quad
    \begin{tikzpicture}[scale=0.175]
		\draw [thick, dotted, line cap=round] (6,1)--(-2.5,1);
		\draw [thick, dotted, line cap=round] (8,3)--(-0.5,3);
		\draw [thick, dotted, line cap=round] (10,5)--(1.5,5);
		\draw [thick, dotted, line cap=round] (5,10)--(5,0.5);
		\draw [thick, dotted, line cap=round] (13,9)--(4.5,9);
		\plotpartialperm{-2/2,0/4,2/6,5/10,6/1,8/3,10/5,13/9};
		\plotpermbox{-2}{1}{4}{8}; 
		\plotpermbox{5}{1}{12}{8}; 
		\plotpermbox{5}{9}{12}{16}; 
		\plotpermbox{13}{9}{19}{16}; 
		\node [below right] at ({-3+0.25},{8+0.65}) {\scriptsize $1$};
		\node [below right] at ({4+0.25},{16+0.65}) {\scriptsize $3$};
		\node [above left] at ({13-0.25},{0+0.35}) {\scriptsize $2$};
		\node [above left] at ({20-0.25},{8+0.35}) {\scriptsize $4$};
    \end{tikzpicture} 
\end{center}
\caption{The picture on the left shows the gridding of the permutation $1234$ encoded by the word $1234$. The pictures in the centre and on the right shown two different griddings of the permutation $24681357$ encoded, respectively, by the words $21212121$ and $43212121$.}
\label{fig-example-1234-obstruction-1}
\end{figure}

The converse of this observation does not hold---a simple example is given by the permutations $\sigma=1234$ and $\pi=24681357$. Taking $\sigma^\gridded$ to be the gridding of $\sigma$ encoded by the word $1234$ shown on the left of Figure~\ref{fig-example-1234-obstruction-1} and $\pi^\gridded$ to be the gridding of $\pi$ encoded by the word $21212121$ shown in the centre of Figure~\ref{fig-example-1234-obstruction-1}, we see that although $\sigma$ is contained in $\pi$, $\omega(\pi^\gridded)$ contains no shift, let alone an even one, of $\omega(\sigma^\gridded)$. Indeed, it can be argued that \emph{no} encoding of $\pi$ contains a shift of the word $1234$---while there are griddings of $\pi$ that use four cells, such as the one shown on the right of Figure~\ref{fig-example-1234-obstruction-1}, none of the encodings of those griddings can contain an increasing subsequence of length four. Thus even a weak converse to Observation~\ref{obs-contain-omnibus-matchings} cannot hold, and indeed the question ``given omnibus encodings of $\sigma$ and $\pi$, determine whether $\sigma\le\pi$'' is difficult to answer with the tools we have at hand. Fortunately, this is not a question we will have to address directly; when it comes time for us to enforce avoidance conditions in Section~\ref{sec-detecting-basis-elts}, we transduce from omnibus encodings to Dyck paths and enforce avoidance conditions in that setting.

In the next two sections we focus on the languages
\[
	\L_c^\infty
	=
	\{w\in\mathbb{P}^\ast\st\text{$w$ satisfies the small ascent condition and avoids all shifts of $(12\cdots c)^c$}\}.
\]
This definition is justified by the following proposition. (Note that, as the proof shows, the condition on avoiding all shifts of $(12\cdots c)^c$ could be weakened, but we have no need to do so.)

\begin{proposition}
\label{prop-avoid-shifts}
For every proper subclass $\C$ of $321$-avoiding permutations there is a positive integer $c$ such that $\omega(\pi^\gridded)\in\L_c^\infty$ for all staircase griddings $\pi^\gridded$ of permutations $\pi\in\C$.
\end{proposition}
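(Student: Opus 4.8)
The plan is to argue contrapositively: if no such $c$ works, then $\C$ contains permutations with arbitrarily large ``$(12\cdots c)^c$-type'' substructure, and we will extract from these a permutation that is not $321$-avoiding, or else show $\C$ is not proper, i.e.\ $\C = \Av(321)$. The cleanest route is to relate the appearance of a shift of $(12\cdots c)^c$ in $\omega(\pi^\gridded)$ to a concrete combinatorial pattern in $\pi$ itself. First I would observe, using Observation~\ref{obs-contain-omnibus-matchings} and the fact that $\bij$ is unaffected by even shifts, that it suffices to identify a single $321$-avoiding permutation $\xi_c$ with a staircase gridding $\xi_c^\gridded$ such that $\omega(\xi_c^\gridded) = (12\cdots c)^c$ (or an even shift thereof), because then any $\pi^\gridded$ whose omnibus encoding contains a shift of $(12\cdots c)^c$ will, after adjusting parity, contain an even shift of $\omega(\xi_c^\gridded)$ as a subword, whence $\xi_c \le \pi$ by Observation~\ref{obs-contain-omnibus-matchings}. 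Concretely, the word $(12\cdots c)^c$ satisfies (SAC) within each block $12\cdots c$ and also at the block boundaries (where the letter drops from $c$ to $1$, trivially $1 \le c+1$), so it lies in $\L^\infty$ and hence equals $\omega(\xi_c^\gridded)$ for a well-defined gridded $321$-avoiding permutation $\xi_c^\gridded = \bij^\gridded((12\cdots c)^c)$.

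Next I would analyze $\xi_c = \bij((12\cdots c)^c)$ geometrically using the two-parallel-rays picture: the $c$ points lying in cell~$j$ (for $1 \le j \le c$) occupy the diagonal segment in that cell, and because each cell contributes an increasing run while consecutive cells interleave in the prescribed staircase fashion, $\xi_c$ contains a long increasing subsequence — in fact one checks $|\xi_c| = c^2$ and that $\xi_c$ contains $1\,2\,\cdots\,c$ many times over; more to the point, the key structural feature is that $\xi_c$ contains arbitrarily ``wide'' configurations as $c \to \infty$. The decisive step: show that for every $321$-avoiding permutation $\beta$ there is a $c$ with $\beta \le \xi_c$. This follows because any $321$-avoiding $\beta$ admits a staircase gridding $\beta^\gridded$ (every such permutation has one, by Section~\ref{sec-staircase}), its omnibus encoding $\omega(\beta^\gridded)$ is a word in $\L^\infty$ on some finite alphabet $\{1,\dots,m\}$ of some length $n$, and any such word is a subword of a sufficiently large even shift of $(12\cdots c)^c$ once $c$ is taken large enough relative to $m$ and $n$ (pad each needed letter by repetition and spread the blocks out — the small-ascent constraint is automatically met since $(12\cdots c)^c$ is the ``maximal'' small-ascent word on its alphabet in the relevant sense). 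Then Observation~\ref{obs-contain-omnibus-matchings} gives $\beta \le \xi_c$.

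Now I assemble the argument. Suppose, for contradiction, that no $c$ works for $\C$: then for each $c$ there is some $\pi_c \in \C$ with a staircase gridding whose omnibus encoding contains a shift of $(12\cdots c)^c$; adjusting by an even shift and applying Observation~\ref{obs-contain-omnibus-matchings}, $\xi_c \le \pi_c \in \C$, so $\xi_c \in \C$ for all $c$ (as $\C$ is a class, hence downward closed). Let $\beta$ be any element of $\Av(321)$; by the previous paragraph $\beta \le \xi_c$ for some $c$, and since $\xi_c \in \C$ and $\C$ is downward closed, $\beta \in \C$. Thus $\C \supseteq \Av(321)$, so $\C = \Av(321)$, contradicting that $\C$ is a proper subclass. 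Therefore some $c$ works, and for that $c$ every staircase gridding $\pi^\gridded$ of every $\pi \in \C$ has $\omega(\pi^\gridded)$ satisfying (SAC) (automatic, since $\omega$ always produces a word in $\L^\infty$) and avoiding all shifts of $(12\cdots c)^c$ — otherwise the same containment argument would force $\xi_c \le \pi \in \C$ and, running the argument again, $\C = \Av(321)$. Hence $\omega(\pi^\gridded) \in \L_c^\infty$, as required.

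The main obstacle I anticipate is the padding/embedding step: making precise that \emph{every} word of $\L^\infty$ on a bounded alphabet and of bounded length embeds as a subword into some shift of $(12\cdots c)^c$. One must be careful that (SAC) is a constraint on the \emph{target} pattern, not just the source, but since a subword of a word is compared by Observation~\ref{obs-order-pres} only through $\bij$, what actually matters is that $\bij(\omega(\beta^\gridded)) = \beta$ and $\bij$ is subword-monotone and shift-invariant on even shifts; so one really only needs $\omega(\beta^\gridded)$ to be a subword of an even shift of $(12\cdots c)^c$ as plain words over $\mathbb P$, with no further condition — and that is a routine combinatorial padding once the block length and block count $c$ exceed the length of $\omega(\beta^\gridded)$.
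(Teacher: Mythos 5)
Your strategy is the same as the paper's at its core: both arguments rest on (i) Observation~\ref{obs-contain-omnibus-matchings}, and (ii) the fact that for a sufficiently large $c$, the omnibus encoding $\omega(\beta^\gridded)$ of any fixed $\beta\in\Av(321)$ embeds as a subword into $(12\cdots c)^c$. You merely phrase the argument contrapositively (assume no $c$ works and conclude $\C=\Av(321)$), whereas the paper is direct (pick any $\beta\notin\C$, set $c=|\beta|+1$, and conclude). The contradiction wrapper is fine but unnecessary.

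There is, however, a genuine gap at the parity step you wave through with ``adjusting by an even shift.'' Observation~\ref{obs-contain-omnibus-matchings} only lets you conclude $\xi_c\le\pi_c$ when $\omega(\pi_c^\gridded)$ contains an \emph{even} shift of $\omega(\xi_c^\gridded)=(12\cdots c)^c$. Your hypothesis gives you an arbitrary shift, and an odd shift $(12\cdots c)^{c,+1}=(23\cdots(c+1))^c$ does \emph{not} contain any even shift of $(12\cdots c)^c$ as a subword (it has no $1$'s and no $(c+2)$'s, so neither the zero shift nor the $+2$ shift fits). The paper circumvents this cleanly: it works with the avoided pattern $(12\cdots(c-1))^{c-1}$ where $c-1=|\beta|$, and observes that $12\cdots c$ contains both $12\cdots(c-1)$ and its $+1$ shift, so every shift of $(12\cdots c)^{c-1}$ --- and hence of $(12\cdots c)^c$ --- contains an \emph{even} shift of $(12\cdots(c-1))^{c-1}$, to which the observation then applies. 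To repair your version you would need an analogous device: either define a second permutation $\xi_c'=\bij((12\cdots c)^{c,+1})$ and treat the two parities separately, or (better) pass to the smaller pattern $(12\cdots(c-1))^{c-1}$ as the paper does. Aside from this, the embedding claim you flag as the ``main obstacle'' is true and is exactly what the paper asserts (with the sharp bound $|\beta|$ on both the alphabet and the number of blocks), so that part of your plan is sound, if unproved.
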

\begin{proof}
Let $\beta$ be any $321$-avoiding permutation not belonging to $\C$ with greedy gridding $\beta^\gridded$ and set $c = |\beta| + 1$. Clearly $\omega(\beta^\gridded)$ is contained in $(1 2 \cdots (c-1))^{c-1}$ and so no word of the form $\omega(\pi^\gridded)$ for $\pi\in\C$ may contain an even shift of $(1 2 \cdots (c-1))^{c-1}$ by Observation~\ref{obs-contain-omnibus-matchings}. Moreover, the word $12\cdots c$ contains both $(1 2 \cdots (c-1) )^{+0}$ and $(1 2 \cdots (c-1) )^{+1}$, so any shift of $(1 2 \cdots c)^{c-1}$ contains an even shift of $(1 2 \cdots (c-1) )^{c-1}$. Therefore no word of the form $\omega(\pi^\gridded)$ for $\pi\in\C$ may contain a shift of $(1 2 \cdots c)^{c-1}$, proving the proposition.
\end{proof}

\begin{figure}
\begin{center}
    \begin{tikzpicture}[scale=0.18]
		\draw [thick, dotted, line cap=round] (5,2)--(-1,2);
		\draw [thick, dotted, line cap=round] (7,4)--(1,4);
		\draw [thick, dotted, line cap=round] (9,6)--(3,6);
		\draw [thick, dotted, line cap=round] (11,8)--(5,8);
		\draw [thick, dotted, line cap=round] (6,9)--(6,3);
		\draw [thick, dotted, line cap=round] (8,11)--(8,5);
		\draw [thick, dotted, line cap=round] (10,13)--(10,7);
		\draw [thick, dotted, line cap=round] (12,15)--(12,9);
		\draw [thick, dotted, line cap=round] (13,10)--(7,10);
		\draw [thick, dotted, line cap=round] (15,12)--(9,12);
		\draw [thick, dotted, line cap=round] (17,14)--(11,14);
		\draw [thick, dotted, line cap=round] (19,16)--(13,16);
		\plotpartialperm{-2/1,0/3,2/5,4/7,5/2,6/9,7/4,8/11,9/6,10/13,11/8,12/15,13/10,15/12,17/14,19/16};
		\plotpermbox{-2}{1}{4}{8}; 
		\plotpermbox{5}{1}{12}{8}; 
		\plotpermbox{5}{9}{12}{16}; 
		\plotpermbox{13}{9}{19}{16}; 
		\node [below right] at ({-3+0.25},{8+0.65}) {\scriptsize $1$};
		\node [below right] at ({4+0.25},{16+0.65}) {\scriptsize $3$};
		\node [above left] at ({13-0.25},{0+0.35}) {\scriptsize $2$};
		\node [above left] at ({20-0.25},{8+0.35}) {\scriptsize $4$};
    \end{tikzpicture} 
\end{center}
\caption{The plot of the gridded permutation $\pi^\gridded$ for which $\omega(\pi^\gridded)=(1234)^4$, or from the geometric viewpoint, the gridded permutation $\bij^\gridded((1234)^4)$.}
\label{fig-example-1234-obstruction-2}
\end{figure}
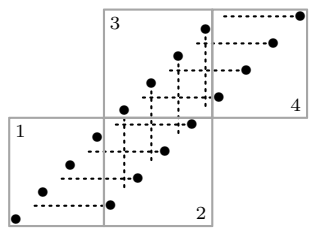


Stated from the geometric perspective, the proof of Proposition~\ref{prop-avoid-shifts} shows that every $321$-avoiding permutation (and consequently, every potential additional basis element of a proper subclass of $\Av(321)$) is contained in $\bij((12\cdots c)^c)$ for some value of $c$. Thus the permutations $\bij((12\cdots c)^c)$ are \emph{universal}%
\footnote{Universal objects for permutation classes are sometimes called \emph{super-patterns}. The typical problem is, given a class $\C$, determine the length of a shortest permutation containing all permutations in $\C_n$. Our universal object is not the shortest possible for the class $\Av(321)$. In particular, Miller~\cite{miller:asymptotic-boun:} found a universal permutation for the class of \emph{all} permutations which has length $(n^2+n)/2$, and Engen and Vatter~\cite{engen:containing-all-:} have recently lowered this bound to $\lceil (n^2+1)/2\rceil$. For the class $\Av(321)$, Bannister, Devanny, and Eppstein~\cite{bannister:small-superpatt:} have constructed a universal permutation of length $22n^{3/2}+\Theta(n)$, while in the case of $\Av(231)$, Bannister, Cheng, Devanny, and Eppstein~\cite{bannister:superpatterns-a:} have established an upper bound of $n^2/4+\Theta(n)$.}
for $\Av(321)$. An example of one of these universal permutations is shown in Figure~\ref{fig-example-1234-obstruction-2}.

%
%
%

%
%
%
%
%

\section{The Panel Encoding \texorpdfstring{$\eta_c$}{}}
\label{sec-panel-encoding}

This section and the next focus solely on the language $\L_c^\infty$ and an encoding, $\eta_c$, which maps it to a language $\L_c^\eta$ over a finite alphabet. The encoding $\eta_c$ is described in this section while the regularity of $\L_c^\eta$ is established in the next. Throughout, consider $c$ to be a fixed positive integer and a word $w \in \L^{\infty}_c$ to be given. Further suppose that the maximum value of a letter in $w$ is $m$.

The material in the remainder of this section is rather technical, so we begin with an overview of the general strategy. Consider the maximal factors of $w$ not containing occurrences of the symbol $1$. Ignoring the (possibly empty) factor which precedes the first $1$ of $w$, all of these factors are immediately preceded by a $1$, and we partition them into two sets. We call one of these factors \emph{large} if it contains an occurrence of the symbol $c$ and \emph{small} if it does not. (The factor which precedes the first $1$ of $w$ is considered neither small nor large, and is handled separately throughout.) By the small ascent condition and the fact that these factors are all preceded by a $1$, each large factor contains an occurrence of $23\cdots c$ which, together with the adjacent $1$, yields an occurrence of $12\cdots c$. Since $w$ avoids $(12\cdots c)^c$ there must be fewer than $c$ large factors. The small ascent condition also implies that each of the small factors forms a word over the alphabet $\{1,\dots,c-1\}$.

The encoding $\eta_c$ begins by performing this separation into small and large factors of $w$. The small factors form a word over $\{1,\dots,c-1\}$ and this word is recorded essentially as is; the large factors are then processed recursively. In order to facilitate the reconstruction of $w$ from its encoding, we need to record the places where the separation occurred. We achieve this by decorating $1$s that are supposed to be followed by the (now removed) large factors, and the matching $2$s at the start of these large factors. Since all letters of the large factors are greater than $1$, we can reduce all of them by $1$ and repeat the process. At each stage the maximum value remaining decreases by at least $1$ (in fact exactly $1$ if there is a large factor present), and so we eventually produce a sequence of (decorated) words over the alphabet $\{1,2,\dots,c-1\}$. The encoding $\eta_c(w)$ is simply the concatenation of these words, separated by punctuation symbols.

Moving to the technical details, we aim to describe an injection $\eta_c : \L_{c}^{\infty} \to \Sigma^*$ where
\[
\Sigma = \{1, 2, \dots, c-1 \} \cup \{ \etaleft{1}, \etaright{1}, \etaleftright{1} \}
\cup \{ \# \}.
\]
We refer to the three symbols $\etaleft{1}$, $\etaright{1}$, $\etaleftright{1}$ as \emph{decorated letters}, and in describing the construction also make use of one more decorated letter: $\etaleft{2}$. Specifically, we have
\[
\eta_c(w) = p_0 \# p_1 \# p_2 \# \cdots \# p_{m-1} \#
\]
(recall that $m$ is the maximum value of a letter in $w$) where none of the factors $p_i$ contains the punctuation symbol $\#$. The words $p_i$ are referred to as \emph{panel words}. Each panel word corresponds to a subword of $w$. More specifically, $p_k^{+k}$ is, after removal of the decorations from any letters, actually a subword of $w$, and together these subwords partition the letters of $w$. Therefore, ignoring the punctuation symbols, $\eta_c$ is an entry-to-entry mapping. The careful reader may note that all panel words of index greater than $m-c+1$ are empty by construction; these are recorded (with punctuation) merely for the sake of consistency.
 

The construction is recursive: we extract the panel words from $w$ in order, starting with $p_0$, so it is convenient to consider also a sequence of \emph{remainder words} $r_0$, $r_1$, \dots, $r_{m-1}$ that represent the as-yet-unencoded letters of $w$.  Each word $r_i$ is defined over the alphabet $\mathbb{P} \cup \{ \etaleft{1} \}$.


The first step of the process is to set $r_0=w$. Suppose that $r_0$ has $k$ letters of value $1$ and express it as
\[
	r_0=r_{0,0}\ 1\ r_{0,1}\ 1\ r_{0,2}\ \cdots\ r_{0,k-1}\ 1\ r_{0,k},
\]
so $r_{0,j}\in (\mathbb{P} \setminus \{1\})^\ast$ for all $j$. Let $J$ denote the set of indices $j$ between $1$ and $k$ such that  $r_{0,j}$ contains a letter of value $c$ (the large factors). Note that $r_{0,0}$ may also contain a letter of value $c$. These factors are precisely what we do \emph{not} encode in the panel word $p_0$. Note that the small ascent condition implies that each non-empty word $r_{0,j}$ for $j \geq 1$ (and particularly all of those with $j\in J$) begins with a $2$. 


We now \emph{decorate} $2|J|$ letters of $r_0$, producing an auxiliary word, $t_0$. We define $t_0$ via factors $t_{0,j}$ for $0\le j\le k$. We first set $t_{0,j}$ equal to $r_{0,j}$ for all $j\notin J$. For $j\in J$, we know that $r_{0,j}$ is nonempty and so begins with a $2$, and we set the corresponding $t_{0,j}$ equal to $r_{0,j}$ with its leftmost $2$ adorned by a $\leftarrow$, turning it into a $\etaleft{2}$, which we call a \emph{left letter}. Each $2$ which is turned into a $\etaleft{2}$ in this process is immediately preceded by a $1$ in $r_0$, which we call a \emph{right letter} and denote by $\etaright{1}$ in $t_0$. After performing these decorations, the auxiliary word $t_0$ can be written as
\[
	t_0=t_{0,0}\ \ell_1 \ t_{0,1}\ \ell_2 \ t_{0,2}\ \cdots\ t_{0,k-1}\ \ell_k \ t_{0,k}
\]
where
\[
	\ell_j=\left\{\begin{array}{ll}
	\etaleft{1}&\text{if $j\in J$ and}\\
	1&\text{if $j\notin J$,}
	\end{array}\right.
\]
and $t_{0,j}$ begins with $\etaleft{2}$ if and only if $j\in J$.

We can now construct our first \emph{panel word}. It is simply the concatenation of all factors $t_{0,j}$ with $j\notin J\cup\{0\}$ (the small factors) and all $\ell_i$, retaining their order in $t_0$. To be precise, if we define
\[
	p_{0,j}=\left\{\begin{array}{ll}
	\epsilon&\text{if $j\in J$ and}\\
	t_{0,j}&\text{if $j\notin J$,}
	\end{array}\right.
\]
then
\[
	p_0=\ell_1\ p_{0,1}\ \ell_2\ p_{0,2}\ \cdots\ p_{0,k-1}\ \ell_k\ p_{0,k}.
\]
The word $p_0$ is thus defined over the alphabet $\{1,2,\dots,c-1\}\cup\{\etaright{1}\}$. Finally, we define $r_1$ to be the result of concatenating the remaining factors $t_{0,j}$ for $j\in J\cup\{0\}$ and then subtracting $1$ from each letter (when we subtract $1$ from a $\etaleft{2}$ we change it to a $\etaleft{1}$). Thus $r_1$ is defined over the alphabet $\mathbb{P} \cup\{\etaleft{1}\}$, and the maximum value of a symbol occurring in $r_1$ is $m-1$.

The decorations of the panel word and the remainder word specify the way to reassemble $w$ from $(p_0,r_1)$. To do so we first form $r_1^{+1}$. We divide this into factors each beginning with a $\etaleft{2}$ (except possibly the first factor). These are the factors $r_{0,j}$ for $j\in J\cup\{0\}$. If $r_1^{+1}$ contains an initial factor that does not begin with $\etaleft{2}$ then we place this factor before $p_0$. Then proceeding from left to right we insert the first of the factors of $r_1^{+1}$ that begin with $\etaleft{2}$ immediately after the first $\etaright{1}$ of $p_1$, then the second factor immediately after the second $\etaright{1}$ of $p_1$, and so on. Effectively, we ``zip together'' $p_0$ and $r_1$ using the arrows to mark the points where the two pieces should mesh with one another. We finish by removing the decorations.

There is only one change in subsequent iterations of this encoding. In constructing $t_j$, $p_j$ and $r_{j+1}$ from $r_j$ for $j\ge 1$, we may wish to designate a $\etaleft{1}$ (a former left letter) as a right letter. If this situation arises, we simply turn the $\etaleft{1}$ into a $\etaleftright{1}$. Thus after we have decorated $r_j$ to form $t_j$, every decorated letter is either a $\etaleft{1}$ or occurs in a $\etaright{1}\: \etaleft{2}$ or $\etaleftright{1} \: \etaleft{2}$ factor. We call the resulting mapping $\psi\st r_j\mapsto (p_j,r_{j+1})$ the \emph{splitting mapping}.

It follows that, as claimed, every panel word is defined over the alphabet $\{1, 2, \cdots, c-1\}\cup\{\etaright{1}, \etaleft{1}, \etaleftright{1}\}$. Moreover, because the greatest letter in $w$ has the value $m$, when we come to construct $p_{m-1}$ from $r_{m-1}$ all (if any) remaining letters are $1$ (or a decorated version thereof) and thus after this stage we can guarantee that we have encoded all of $w$. As promised, our ultimate encoding consists of the concatenation of these panel words, separated by punctuation, $\eta_c(w)=p_0\# p_1\# \cdots \# p_{m-1} \#$.

\newcommand{\ts}{\!\;}	

We illustrate this process with a concrete example. Suppose that $c=4$ and consider encoding the word
\[
	\begin{array}{rcl}
	\phantom{t_0}&&\phantom{\underline{2\ts 3}\ts 1\ts 2\ts 3\ts \etaright{1}\ts \underline{\etaleft{2}\ts 2\ts 3\ts 2\ts 3\ts 4\ts 5\ts 2\ts 3}\ts \etaright{1}\ts \underline{\etaleft{2}\ts 3\ts 4\ts 5}\ts 1\ts 2\ts 1\ts 2\ts 3\ts 2\ts \etaright{1}\ts \underline{\etaleft{2}\ts 3\ts 4\ts 3}.}\\[-12pt]
	w&=&2\ts 3\ts 1\ts 2\ts 3\ts 1\ts 2\ts 2\ts 3\ts 2\ts 3\ts 4\ts 5\ts 2\ts 3\ts 1\ts 2\ts 3\ts 4\ts 5\ts 1\ts 2\ts 1\ts 2\ts 3\ts 2\ts 1\ts 2\ts 3\ts 4\ts 3
	\end{array}
\]
from $\L_{4}^\infty$. In our first step, we set $r_0=w$, divide it into factors, and add decorations (here and in what follows we underline the factors that remain in $r_{j+1}$). We call the resulting decorated word $t_0$. In our example, this step yields
\[
	\begin{array}{rcl}
	\phantom{w}&&\phantom{2\ts 3\ts 1\ts 2\ts 3\ts 1\ts 2\ts 2\ts 3\ts 2\ts 3\ts 4\ts 5\ts 2\ts 3\ts 1\ts 2\ts 3\ts 4\ts 5\ts 1\ts 2\ts 1\ts 2\ts 3\ts 2\ts 1\ts 2\ts 3\ts 4\ts 3}\\[-12pt]
	t_0&=&\underline{2\ts 3}\ts 1\ts 2\ts 3\ts \etaright{1}\ts \underline{\etaleft{2}\ts 2\ts 3\ts 2\ts 3\ts 4\ts 5\ts 2\ts 3}\ts \etaright{1}\ts \underline{\etaleft{2}\ts 3\ts 4\ts 5}\ts 1\ts 2\ts 1\ts 2\ts 3\ts 2\ts \etaright{1}\ts \underline{\etaleft{2}\ts 3\ts 4\ts 3}.
	\end{array}
\]
We then form both $p_0$ and $r_1$ and repeat the process to form $t_1$, computing
\[
	\begin{array}{rclcrcl}
	\phantom{p_1}&\phantom{=}&\phantom{1\ts 2\ts \etaleft{1}\ts 1\ts 2\ts \etaright{1}\ts 1\ts 2\ts \etaleftright{1}\ts \etaleft{1}\ts 2\ts 3\ts 2,}
	&\quad&
	\phantom{r_2}&\phantom{=}&\phantom{\etaleft{1}\ts 2\ts 3\ts \etaleft{1}\ts 2\ts 3,}
	\\[-12pt]
	p_0&=&1\ts 2\ts 3\ts \etaright{1}\ts \etaright{1}\ts 1\ts 2\ts 1\ts 2\ts 3\ts 2\ts \etaright{1},
	&\quad&
	r_1&=&1\ts 2\ts \etaleft{1}\ts 1\ts 2\ts 1\ts 2\ts 3\ts 4\ts 1\ts 2\ts \etaleft{1}\ts 2\ts 3\ts 4\ts \etaleft{1}\ts 2\ts 3\ts 2,\\
	&&&&
	t_1&=&1\ts 2\ts \etaleft{1}\ts 1\ts 2\ts \etaright{1}\ts \underline{\etaleft{2}\ts 3\ts 4}\ts 1\ts 2\ts \etaleftright{1}\ts \underline{\etaleft{2}\ts 3\ts 4}\ts \etaleft{1}\ts 2\ts 3\ts 2.
	\end{array}
\]
The list of panel words and remainders is completed by performing these operations several more times, in which we find
\[
	\begin{array}{rclcrcl}
	\phantom{p_0}&\phantom{=}&\phantom{1\ts 2\ts 3\ts \etaright{1}\ts \etaright{1}\ts 1\ts 2\ts 1\ts 2\ts 3\ts 2\ts \etaright{1},}
	&&
	\phantom{t_1}&\phantom{=}&\phantom{1\ts 2\ts \etaleft{1}\ts 1\ts 2\ts \etaright{1}\ts \underline{\etaleft{2}\ts 3\ts 4}\ts 1\ts 2\ts \etaleftright{1}\ts \underline{\etaleft{2}\ts 3\ts 4}\ts \etaleft{1}\ts 2\ts 3\ts 2.}
	\\[-12pt]
	p_1&=&1\ts 2\ts \etaleft{1}\ts 1\ts 2\ts \etaright{1}\ts 1\ts 2\ts \etaleftright{1}\ts \etaleft{1}\ts 2\ts 3\ts 2,
	&\quad&
	r_2&=&\etaleft{1}\ts 2\ts 3\ts \etaleft{1}\ts 2\ts 3,
	\\
	&&&&
	t_2&=&\etaleft{1}\ts 2\ts 3\ts \etaleft{1}\ts 2\ts 3,
	\\
	p_2&=&\etaleft{1}\ts 2\ts 3\ts \etaleft{1}\ts 2\ts 3,
	&\quad&
	r_3&=&\epsilon,
	\\
	&&&&
	t_3&=&\epsilon,
\end{array}
\]
and all of $p_3$, $r_4$, $t_4$, and $p_4$ are empty. Our encoding of $w$ is the concatenation of the panel words $p_0$, $p_1$, $p_2$, $p_3$, and $p_4$ separated by punctuation,
\[
	\eta_{4}(w)
	=
	1\ts 2\ts 3\ts \etaright{1}\ts \etaright{1}\ts 1\ts 2\ts 1\ts 2\ts 3\ts 2\ts \etaright{1}\ts \#\ts 
	1\ts 2\ts \etaleft{1}\ts 1\ts 2\ts \etaright{1}\ts 1\ts 2\ts \etaleftright{1}\ts \etaleft{1}\ts 2\ts 3\ts 2\ts \#\ts 
	\etaleft{1}\ts 2\ts 3\ts \etaleft{1}\ts 2\ts 3\ts \#\ts \#\ts \#.
\]

Returning to the general situation, we define
\[
	\L^{\eta}_c = \{\eta_c(w)\st w\in\L_c^\infty\}
\]
to be the image of $\L_c^{\infty}$ under $\eta_c$. Except for punctuating letters, the panel encoding $\eta_c$ is an entry-to-entry mapping. We collect this fact, along with further bookkeeping results which follow immediately from the definition of $\eta_c$, below.

\begin{observation}
\label{obs-where-encoded}
The mapping $\eta_c\st\L_c^\infty\to\L_c^{\eta}$ is a bijection. Moreover, under this mapping, the non-punctuation letters of $\eta_c(w)$ are in one-to-one correspondence with the letters of $w$. In addition, an entry of value $j$ in the panel word $p_i$ of $\eta_c(w)$ corresponds to an entry of value $i+j$ in $w$. Hence, every entry of the panel word $p_i$ corresponds to a letter of value between $i+1$ and $i+c-1$ in $w$, while every letter $i$ in $w$ corresponds to an entry in one of the panel words $p_{i-c+1}$, $p_{i-c+2}$, \dots, or $p_{i-1}$ in $\eta_c(w)$.
\end{observation}

It should be clear that $\eta_c$ is injective, but as we shall need some properties of its inverse in what follows, we shall be a little more explicit. We begin by defining $\psi^{-1}$, the inverse of the splitting mapping. This is the mapping that ``zips together'' a panel word and a remainder word as described previously for the case of $p_0$ and $r_1$.

Suppose that $p$ (to be thought of as the most recent panel word) is a word over $\mathbb{P}\cup\{\etaright{1},\etaleft{1},\etaleftright{1}\}$ and $r$ (to be thought of as the most recent remainder word) is a word over $\mathbb{P}\cup\{\etaleft{1}\}$, and that the number of right letters in $p$ equals the number of left letters in $r$ (both are equal to $k$ below). Now express $p$ and $r$ in the form 
\[
	\arraycolsep=4pt
	\begin{array}{rcllllllllcllllll}
	\phantom{\psi^{-1}(p,r)}&\phantom{=}&\phantom{s_0^{+1}}&\phantom{q_0}&\phantom{\dot{\ell}_1}&\phantom{2}&\phantom{s_1^{+1}}&\phantom{q_1}&\phantom{\dot{\ell}_2}&\phantom{2}&\phantom{\cdots}&\phantom{s_{k-1}^{+1}}&\phantom{q_{k-1}}&\phantom{\dot{\ell}_k}&\phantom{2}&\phantom{s_k^{+1}}&\phantom{q_k,}\\[-12pt]
	p&=&&q_0&\ell_1&&&q_1&\ell_2&&\cdots&&q_{k-1}&\ell_k&&&q_k,\\
	r&=&s_0&&&\etaleft{1}&s_1&&&\etaleft{1}&\cdots&s_{k-1}&&&\etaleft{1}&s_k,&
	\end{array}
\]
where each $\ell_j$ is a right letter (i.e., $\etaright{1}$ or $\etaleftright{1}$). The inverse of the splitting mapping is defined by
\[
	\arraycolsep=4pt
	\begin{array}{rcllllllllcllllll}
	\psi^{-1}(p,r)&=&s_0^{+1}&q_0&\dot{\ell}_1&2&s_1^{+1}&q_1&\dot{\ell}_2&2&
	\cdots&
	s_{k-1}^{+1}&q_{k-1}&\dot{\ell}_k&2&s_k^{+1}&q_k,
	\end{array}
\]
where $s_i^{+1}$ is the shift-by-$1$ mapping applied to $s_i$ and
\[
	\dot{\ell}_i=\left\{
	\begin{array}{ll}
	1&\text{if $\ell_i=\etaright{1}$},\\
	\etaleft{1}&\text{if $\ell_i=\etaleftright{1}$}
	\end{array}
	\right.
\]
is the mapping that removes right arrows.


Supposing that $p_0$, $p_1$, \dots, $p_{m-1}$ are words over $\mathbb{P}\cup\{\etaright{1},\etaleft{1},\etaleftright{1}\}$ and that the number of right letters of $p_i$ is equal to the number of left letters of $p_{i+1}$ for $0 \leq i < m-1$ we can define
\[
	\Psi^{-1}(p_0\#p_1\#\cdots\#p_{m-1}\#)
	=
	\psi^{-1}(p_0,
	\psi^{-1}(\dots
	\psi^{-1}(p_{m-3}, \psi^{-1}(p_{m-2},p_{m-1}))
	\dots)).
\] 

For $w \in \L_c^{\infty}$, $\Psi^{-1} ( \eta_c(w)) = w$, so $\eta_c$ is indeed an injection on $\L_c^{\infty}$ (and in this context we often write $\eta_c^{-1}$ in place of $\Psi^{-1}$).

There are several features of $\psi^{-1}$ that are important to draw attention to. First, except for removing some decoration and incrementing $r$ by one, $\psi^{-1}$ does not change the subwords $p$ and $r$ at all; that is, absent decoration, $p$ and $r^{+1}$ occur as subwords in  $\psi^{-1}(p,r)$. Second, undecorated letters play no significant role in the reassembly process performed by $\psi^{-1}$, in fact their only role is to be copied into the output (possibly after incrementation). Thus if we delete an undecorated letter from $\eta_c(w)$ and then apply $\Psi^{-1}$ the result is $w$ with the corresponding letter deleted.

The next result provides the interface that we need later to impose basis conditions on panel encodings.

\begin{proposition}
\label{prop-delete-panels}
Let $u$ be a subword of $\eta_c(w)$ whose letters occur in the contiguous set of panel words $p_i$, $p_{i+1}$, $\dots$, $p_{i+k}$. The relative positions of the letters of $w$ corresponding to those in $u$ are determined by the subword of $\eta_c(w)$ consisting of the letters in $u$ together with all decorated letters of the panel words $p_i$, $p_{i+1}$, $\dots$, $p_{i+k}$ and the punctuation symbols $\#$ between them.
\end{proposition}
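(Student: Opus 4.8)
The plan is to trace the decoding map $\eta_c^{-1}=\Psi^{-1}$ and to exploit the fact that the individual inverse-splitting maps $\psi^{-1}$ act locally. Recall that $\eta_c(w)=p_0\#\cdots\#p_{m-1}\#$ is decoded by setting $r_{m-1}=p_{m-1}$ and $r_j=\psi^{-1}(p_j,r_{j+1})$ for $j=m-2,\dots,0$, so that $w=r_0$ after deleting decorations. Two features of $\psi^{-1}$ noted before the statement drive the argument: undecorated letters are merely copied (and possibly incremented), hence can be inserted or deleted freely; and, decorations aside, $\psi^{-1}(p,r)$ contains $p$ and $r^{+1}$ as subwords, interleaved by a rule depending only on the decorated letters --- each maximal factor of $r$ cut out by its left letters is spliced into $p$ immediately after the corresponding right letter of $p$, taken in order (a factor with no left letter to its left being placed before all of $p$). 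From this one reads off three bookkeeping facts about $\psi^{-1}(p,r)$: (i) two letters both coming from $p$ keep their relative order and relative values; (ii) two letters both coming from $r$ keep their relative order and, being incremented together, their relative values; (iii) if $x$ comes from $p$ and $y$ from $r$, then $x$ precedes $y$ if and only if the number of left letters of $r$ preceding $y$ exceeds the number of right letters of $p$ preceding $x$; moreover every left letter of $\psi^{-1}(p,r)$ originates in $p$ (namely its plain $\etaleft{1}$'s, and its $\etaleftright{1}$'s with the right arrow removed).

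It suffices to show that $\tilde u$ (the subword described in the statement) determines, for any two letters $x,y$ of $u$ --- say $x$ lying in panel $p_{i+\alpha}$ and $y$ in $p_{i+\beta}$, where the $\#$'s within $\tilde u$ reveal the offsets $\alpha,\beta$ --- both the left-to-right order and the value comparison of the corresponding entries of $w$. The value comparison is immediate from Observation~\ref{obs-where-encoded}: an entry of value $j$ in panel $p_\ell$ has value $\ell+j$ in $w$, so the two values in $w$ are $(i+\alpha)+(\text{value of }x)$ and $(i+\beta)+(\text{value of }y)$, and their difference does not involve the unknown $i$ and is visible in $\tilde u$. For the order, the case $\alpha=\beta$ is easy: by (i) once and (ii) at every later step, $x$ and $y$ keep their order from $p_{i+\alpha}$ all the way down to $w$.

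The substantive case is $\alpha<\beta$. When $r_{i+\alpha}=\psi^{-1}(p_{i+\alpha},r_{i+\alpha+1})$ is formed, $y$ already lies inside $r_{i+\alpha+1}$ while $x$ is a letter of $p_{i+\alpha}$; by (ii) their order is then unchanged through all later steps $r_j=\psi^{-1}(p_j,r_{j+1})$ with $j<i+\alpha$, so the order in $w$ equals the order in $r_{i+\alpha}$, which by (iii) is decided by comparing the number of right letters of $p_{i+\alpha}$ before $x$ --- plainly read from $\tilde u$ --- with the number of left letters of $r_{i+\alpha+1}$ before $y$. By the last clause of (iii), the left letters of $r_{i+\alpha+1}$ all originate in $p_{i+\alpha+1}$, so this second count equals the number of $\etaleft{1}$'s and $\etaleftright{1}$'s of $p_{i+\alpha+1}$ that end up before $y$ in $r_{i+\alpha+1}$. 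If $\beta=\alpha+1$, that is a count inside $p_{i+\alpha+1}$, visible in $\tilde u$. If $\beta>\alpha+1$, then $y$ lies in $r_{i+\alpha+2}$ and is spliced into $p_{i+\alpha+1}$ just after its $t$-th right letter, where $t$ is the number of left letters of $r_{i+\alpha+2}$ before $y$; so the desired count equals the number of $\etaleft{1}$'s of $p_{i+\alpha+1}$ before its $t$-th right letter plus the number of its first $t$ right letters that are $\etaleftright{1}$ --- determined by $\tilde u$ once $t$ is known --- and $t$ is the same question one panel deeper. Iterating descends through $p_{i+\alpha+1},p_{i+\alpha+2},\dots,p_{i+\beta}$, all within the named block, and terminates at $p_{i+\beta}$.

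I expect the only delicate point to be precisely this downward induction: verifying that the recursion for ``number of left letters of $r_{i+\gamma+1}$ before $y$'' never consults a panel beyond $p_{i+k}$ (it does not, because that count is local to $p_{i+\gamma+1}$ by (iii)), and handling the small variants of (i)--(iii) required when $x$ or $y$ is itself a decorated letter --- for instance when $y$ is a left-letter delimiter of some $r_{i+\gamma}$ rather than an interior letter of one of its factors. Everything else is the routine bookkeeping encapsulated in (i)--(iii) and Observation~\ref{obs-where-encoded}.
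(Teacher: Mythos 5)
Your proposal is essentially correct in strategy and rederives the two structural facts about $\psi^{-1}$ that the paper records just before the statement (that $p$ and $r^{+1}$ survive as subwords of $\psi^{-1}(p,r)$, and that undecorated letters are merely copied through), but it then deploys them in a much more laborious way than the paper does, and leaves an acknowledged loose end.

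The paper's route is a reduction. Since each $\psi^{-1}$ preserves the relative order of the second argument's letters, one may halt the computation of $\Psi^{-1}$ once all letters of $u$ have appeared, i.e.\ after forming $r_i$. In $\psi^{-1}(p_{i+k}, r_{i+k+1})$ every letter that came from $r_{i+k+1}$ has lost its decoration, and none of these letters belongs to $u$; by the undecorated-deletion observation one may discard them, so only $p_{i+k},\dots,p_i$ remain. Applying the same observation once more deletes all undecorated non-$u$ letters from these panels, leaving precisely the data in the statement. Your proof instead computes, for every pair $x,y$ of letters of $u$, whether $x$ precedes $y$ in $w$, via a recursion that tracks counts of left and right letters down through $p_{i+\alpha},\dots,p_{i+\beta}$. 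This works and terminates inside the named block, as you argue, but it is a by-hand reconstruction where the paper only needs to show the irrelevant information may be discarded.

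There is a genuine imprecision in your rule (iii): as stated, ``$x$ precedes $y$ iff the number of left letters of $r$ strictly before $y$ exceeds the number of right letters of $p$ strictly before $x$'' fails when $y$ is itself a decorated $\etaleft{1}$ of $r$. For instance take $x=\ell_1$, the first right letter of $p$, and $y$ the first $\etaleft{1}$ of $r$: both counts are $0$, so the rule predicts $y$ before $x$, but in $\psi^{-1}(p,r)=s_0^{+1}\,q_0\,\dot{\ell}_1\,2\,s_1^{+1}\,q_1\cdots$ the symbol $\dot{\ell}_1$ sits immediately \emph{before} the $2$ that $y$ became. The same failure occurs when $x$ is in $q_0$. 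You flag this (``handling the small variants $\dots$ when $y$ is a left-letter delimiter''), and the fix is straightforward (treat the case where $y$ is the $j$-th $\etaleft{1}$ of $r$ separately, placing it just after the $j$-th right letter of $p$), but as written (iii) is not correct in full generality and the recursion that relies on it needs that correction spelled out for the argument to close.
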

\begin{proof}
Write $\eta_c(w)=p_0\#p_1\#\cdots\#p_{m-1}\#$ and consider the process of inverting the $\eta_c$ mapping. Once we have formed a word containing all of the letters of $u$ we may stop,
so it suffices to compute
\[
	\psi^{-1}(p_i, \dots \psi^{-1}(p_{i+k},
		\psi^{-1}(p_{i+k+1}, \dots \psi^{-1}(p_{m-2},p_{m-1}) \dots
	)) \dots )
	=
	\psi^{-1}(p_i, \dots \psi^{-1}(p_{i+k},
		r
	) \dots ),
\]
where $r=\psi^{-1}(p_{i+k+1}, \dots \psi^{-1}(p_{m-2},p_{m-1}) \dots)$. In $\psi^{-1}(p_{i+k},r)$, the letters corresponding to $r$ have lost their decoration, and thus may be forgotten by our observation above. Thus it suffices to compute
\[
	\psi^{-1}(p_i, \dots \psi^{-1}(p_{i+k-1}, p_{i+k}) \dots ).
\]
Applying our observation again, we may remove all undecorated letters not belonging to $u$ from these panels without affecting the eventual order of the letters corresponding to $u$. What remains is the information specified in the statement of the proposition (the punctuation symbols serving to distinguish $p_i$ through $p_{i+k}$).
\end{proof}

\section{The Regularity of \texorpdfstring{$\L_c^\eta$}{the Image}}

Our ultimate aim is to establish that various sublanguages of $\L_c^\eta$ (corresponding to finitely based or well-quasi-ordered subclasses of $321$-avoiding permutations) are regular. We first establish that $\L_c^\eta$ itself is regular. The material in this section is also somewhat technical so we again provide an initial informal discussion. We seek to recognise whether a word over the alphabet $\{1, 2, \dots, c-1 \} \cup \{ \etaleft{1}, \etaright{1}, \etaleftright{1} \} \cup \{ \# \}$ belongs to $\L_{c}^{\eta}$. The basic idea is to identify several necessary conditions which are collectively sufficient. Then, if we verify that each individual necessary condition corresponds to a regular language, the closure of regular languages under the Boolean operations proves the result we want. Roughly speaking there are three such necessary conditions: a translation of the small ascent condition, consistency in left-right decorations between consecutive panel words (here the fact that the number of such decorations is bounded is critical, a fact that follows from our observation in the previous section that the number of large factors is bounded), and that the number of panel words captures the maximum letter(s) properly, i.e., that the encoding is not terminated too early or too late. 

We begin with a more detailed look at various properties of panel words, remainder words, and the encodings $\eta_c(w)$. Denote by $\leftcount(p)$ and $\rightcount(p)$ the number of left letters and right letters of a word $p$ (occurrences of $\etaleftright{1}$ contribute to both counts).

Consider the language of all remainder words $r=r_j$ which could arise in the process of encoding words from $\L_c^\infty$. This is a language over the alphabet $\mathbb{P}\cup\{\etaleft{1}\}$. For $j=0$, $r$ is an arbitrary element of $\L_c^\infty$; in particular $\L_c^\infty$ is contained within the language under consideration. Otherwise, $r$ is obtained from an earlier remainder word by marking the left and right letters, concatenating the factor preceding the initial $1$ with the factors from any $\etaleft{2}$ up to but not including the subsequent (marked or unmarked) $1$ and then reducing the value of all letters by $1$. It follows inductively that any such word $r$ satisfies the following three conditions.
\begin{enumerate}[label=\textup{(R\arabic*)}, leftmargin=*, widest=1]
\item The undecorated copy of $r$ (obtained by substituting $1$ for every $\etaleft{1}$) belongs to $\L_c^\infty$. 
\item The inequality $\leftcount(r)< c$ holds.
\item If $\leftcount(r)=k$ and $r=s_0\ \etaleft{1}\ s_1\ \etaleft{1}\ \cdots\ \etaleft{1}\ s_k$ then each factor $s_i$ for $1\le i\le k$ contains a letter of value $c-1$.
\end{enumerate}

Define $\R_c$ to be the language of all words over the alphabet $\mathbb{P}\cup\{\etaleft{1}\}$ satisfying (R1)--(R3). Owing to the fact that the words in $\R_c$ satisfy (R1), it can be seen that the splitting mapping $\psi\st r_j\mapsto (p_j,r_{j+1})$, as defined in Section~\ref{sec-panel-encoding}, can actually be applied to all words in the language $\R_c$. We abuse terminology and denote this extension to $\R_c$ also by $\psi$.

The splitting mapping $\psi\st r_j\mapsto (p_j,r_{j+1})$ (extended to $\R_c$ as described above) produces two words, the second of which also belongs to the language $\R_c$. To characterise the first of these words (the panel word) we introduce another language. We define $\P_c$ to be the set of words $p$ over the alphabet $\{1,2,\dots,c-1\}\cup\{\etaleft{1},\etaright{1},\etaleftright{1}\}$ which satisfy the rules (P1)--(P5) below.
\begin{enumerate}[label=\textup{(P\arabic*)}, leftmargin=*, widest=1]
\item The undecorated copy of $p$ satisfies the small ascent condition.
\item If $p$ is non-empty then its first letter is $1$, $\etaleft{1}$, $\etaright{1}$, or $\etaleftright{1}$.
\item The inequalities $\leftcount(p) < c$ and $\rightcount(p) < c$ hold.
\item Any letter immediately following a right letter of $p$ is one of $1$, $\etaright{1}$, $\etaleft{1}$, or $\etaleftright{1}$.
\item If $\leftcount(p)=k$ and $p=q_0\ \ell_1\ q_1\ \ell_2\ q_2\ \cdots\ q_{k-1}\ \ell_k\ q_k$, where the $\ell_i$ are the left letters of $p$, then each factor $\ell_i q_i$ for $1\le i\le k$ contains either an occurrence of $c-1$ or a right letter.
\end{enumerate}
Before establishing the validity of these properties we make two observations. The first is that not only do the first components of the images of $\psi$ satisfy (P1)--(P5), but also every panel word $p=p_j$ arising in the encodings $\eta_c(w)$ satisfies these conditions. Our second observation is that the language $\P_c$ is clearly regular; to see this it suffices to check that the negation of each of (P1)--(P5) defines a regular language, and then appeal to the closure of regular languages under boolean operations.

Establishing the validity of the properties (P1)--(P5) is fairly straightforward, so we limit ourselves to a few words of justification. For (P1) note that any panel word is a concatenation of factors beginning with $1$ satisfying the small ascent condition, hence does so itself. The property (P3) follows from (R2), because the left letters of $p=p_j$ are inherited from the remainder word $r_j$, while the right letters are matched with the left letters of the remainder word $r_{j+1}$. For (P4), the factor following a right letter up to the next 1 is carried forward to the next remainder, so the next letter of a panel word must have value $1$.  Finally, for (P5), the left letters in $p=p_j$ correspond to the left letters of $r_j$. These, in turn, correspond to the distinguished occurrences of $12\cdots c$ in $r_{j-1}$: of each such occurrence, $\etaleft{2}3\cdots c$ is carried forward into $r_j$ where it becomes $\etaleft{1}2\cdots (c-1)$. If the symbol $c-1$ does not make it from $r_j$ into $p$, the reason is that it is carried forward into $r_{j+1}$, in which case a right letter remains in $p$ to indicate the location of its removal.

Before our next result we need to recall the inverse, $\psi^{-1}$, of the splitting mapping defined in the previous section. As this mapping is simply a formal reversal of the procedure used to compute $\psi$, it can be extended verbatim to all pairs $(p,r)\in \P_c\times\R_c$ that satisfy $\rightcount(p)=\leftcount(r)$.

\begin{proposition}
\label{prop-splitting-bijection}
The extended mappings $\psi$ and $\psi^{-1}$ are mutually inverse bijections between $\R_c$ and
$\{(p,r)\in\P_c\times\R_c\st \rightcount(p)=\leftcount(r)\}$.
\end{proposition}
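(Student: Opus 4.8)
The plan is to verify the claim in three stages: first that $\psi$ maps $\R_c$ into the asserted codomain, second that $\psi^{-1}$ maps the asserted codomain into $\R_c$, and third that the two maps compose (in both orders) to the identity. The first two stages are essentially a careful reading of the definitions given in Section~\ref{sec-panel-encoding}, checking that each of the defining conditions (P1)--(P5) and (R1)--(R3) is preserved; the third stage is the substantive one.

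For the first stage, fix $r\in\R_c$ and let $(p,r')=\psi(r)$. The properties (P1)--(P5) of $p$ were in fact already justified informally in the paragraph preceding the definition of $\P_c$, but now we must recheck that the argument uses only (R1)--(R3) and not any extra information about where $r$ came from. Condition (R1) gives the small ascent condition on the undecorated copy of $r$, from which (P1) follows since $p$ is a concatenation of factors each beginning with $1$ (or a decorated $1$); (P2) and (P4) are immediate from the construction of $t_j$ from $r_j$; (P3) follows from $\rightcount(p)=\leftcount(r')$ together with $\leftcount(p)\le\leftcount(r)<c$ (using (R2)) and the fact that $r'$ will satisfy (R2) as well; (P5) is exactly the inheritance argument already given. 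Simultaneously one checks that $r'$ satisfies (R1)--(R3): (R1) because decrementing and reconcatenating preserves membership in $\L_c^\infty$ (this is where the small ascent condition and the shift-avoidance are both needed, and one invokes that $\L_c^\infty$ is closed under the relevant surgery), (R2) because the number of new left letters of $r'$ equals $|J|\le c-1$, and (R3) because each carried-forward factor begins with a (decorated) $2$ and contained a $c$ before decrementing, hence contains a $c-1$ after. Finally $\rightcount(p)=\leftcount(r')=|J|$ by construction.

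For the second stage, fix $(p,r)\in\P_c\times\R_c$ with $\rightcount(p)=\leftcount(r)=k$ and set $w'=\psi^{-1}(p,r)$, using the explicit formula
\[
	\psi^{-1}(p,r)=s_0^{+1}\,q_0\,\dot{\ell}_1\,2\,s_1^{+1}\,q_1\,\dot{\ell}_2\,2\,\cdots\,s_{k-1}^{+1}\,q_{k-1}\,\dot{\ell}_k\,2\,s_k^{+1}\,q_k
\]
from Section~\ref{sec-panel-encoding}. One verifies (R1): the undecorated copy of $w'$ satisfies the small ascent condition because each $s_i^{+1}$ does (by (R1) for $r$), each $q_i$ does (by (P1)), and at each seam the juncture is either $\dot{\ell}_i\,2$ (an ascent of exactly one into the leading $2$ of $s_i^{+1}$, which is legitimate since $s_i$ began with a $1$ or $\etaleft{1}$ by (R2)/(R3)) or the transition out of a $q$ into a $\dot{\ell}$ (governed by (P2)/(P4)); and $w'$ avoids all shifts of $(12\cdots c)^c$ because otherwise one pulls back such an occurrence through the surgery, using (P3) and (R2) to bound the number of seams involved. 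Property (R2) for $w'$ holds because $\leftcount(w')=\leftcount(r)+\#\{i:\ell_i=\etaleftright{1}\}\le\leftcount(p)<c$ by (P3); and (R3) follows from (R3) for $r$ together with (P5), which guarantees each newly created $\etaleft{1}\cdots$ factor (or each inherited one) contains a $c-1$.

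The third stage — showing $\psi^{-1}\circ\psi=\mathrm{id}_{\R_c}$ and $\psi\circ\psi^{-1}=\mathrm{id}$ on the pair space — is where I expect the real work, and it is the main obstacle. The key point is that the decorations are precisely a complete record of the cut-and-paste: in $t_j$ the marked $2$s (turned into $\etaleft{2}$) flag exactly the factors $r_{0,j}$ with $j\in J$ that are split off, and the preceding $\etaright{1}$s flag exactly where they are to be reinserted. Thus applying $\psi^{-1}$ to $\psi(r)=(p,r')$ re-zips the small factors of $p$ with the large factors of $r'^{+1}$ along the arrows, and the crucial verification is that the leading $\etaleft{2}$ of each carried factor, after the $+1$ shift, lands immediately after the corresponding $\dot\ell_i=1$ (or $\etaleft{1}$) of $p$ in exactly the position it occupied in $r$ before the split — so that the resulting word, after stripping decorations, is $r$ verbatim. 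Here one must be slightly careful about the $\etaleftright{1}$ letters, which simultaneously play the role of a left letter inherited from an earlier stage and a right letter created at this stage; tracking that a single such letter is correctly decremented/re-marked under $\psi^{-1}$ is the one genuinely fiddly sub-case. The opposite composition $\psi\circ\psi^{-1}=\mathrm{id}$ is symmetric: given $(p,r)$, forming $w'=\psi^{-1}(p,r)$ and then running $\psi$ on $w'$ recovers, as the set $J$, exactly the positions marked by the $\etaleft{2}$s that $\psi^{-1}$ inserted, so $p$ and $r$ are reconstructed letter for letter. Since both compositions are the identity, $\psi$ and $\psi^{-1}$ are mutually inverse bijections between the two stated sets.
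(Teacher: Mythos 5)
Your proposal follows the same three-stage structure as the paper's proof, and the substance matches closely: the paper also dispatches the forward direction (that $\psi$ lands in $\P_c\times\R_c$ with matching counts) in a single sentence, verifies (R1)--(R3) for $s=\psi^{-1}(p,r)$ via essentially the same inheritance arguments from (P1), (P3), (P4), (P5) and (R1), (R3), and handles the mutual-inverse claim in one line, namely that $\psi^{-1}$ was designed precisely so that $\psi$ reverses it. The only real divergence is one of emphasis---you treat the inverse verification as the main work while the paper treats the codomain verification as such---plus one detail worth recording: the paper's bullet-point check of (R1) for $s$ addresses only the small ascent condition and not the avoidance of shifts of $(12\cdots c)^c$; you flag this point correctly, but the argument you sketch (``pulling back through the surgery'') is left at the same level of informality as the paper's omission, so neither treatment fully closes that small gap.
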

\begin{proof}
If $s \in \R_c$ and $\psi(s) = (p, r)$, it is easy to see that $(p,r)\in\P_c\times\R_c$. Also, we have $\rightcount(p)= \leftcount(r)$, because at the stage when the letters of $s$ are decorated, the newly decorated letters occur in adjacent pairs and indicate precisely the positions where the splitting into $p$ and $r$ occurs.

Now take an arbitrary pair $(p,r)\in\P_c\times\R_c$ with $\rightcount(p)=\leftcount(r)=k$ and set $s=\psi^{-1}(p,r)$. To establish that $s\in\R_c$ we observe that it must possess the following properties.
\begin{itemize}
\item It lies in $\left(\mathbb{P}\cup\{\etaleft{1}\}\right)^\ast$ because all right arrows have been removed.
\item The undecorated version of $s$ satisfies the small ascent condition because the undecorated copies of $p$ and $r$ satisfy this condition by (P1) and (R1), and the factors inserted into $p$ to form $s$ create $12$ factors at their left hand ends (by definition of $\psi^{-1}$) , and descents at their right hand ends (by (P4)).
\item It satisfies $\leftcount(s)< c$, because the left letters are inherited from those of $p$, which satisfies (P3). 
\item Each factor of $s$ between two consecutive left letters contains an occurrence of $c-1$, as does the suffix following the last left letter. This follows because left letters of $s$ are inherited from those of $p$. Thus by (P5) either there is already an occurrence of $c-1$ in such a factor, or there was a right letter in the corresponding part of $p$. In the latter case, a factor of $r$ beginning with $\etaleft{1}$ was inserted (and increased by $1$) following such a right letter, and by (R3) this results in an occurrence of $c$. The small ascent condition then also guarantees an occurrence of $c-1$.
\end{itemize}
Therefore $s$ indeed satisfies (R1)--(R3), as required. Moreover, $\psi^{-1}$ has been designed precisely so that the splitting mapping reverses it, which completes the proof.
\end{proof}

We now turn to the language $\L_c^\eta$ of all $\eta_c$ encodings of words from $\L_c^\infty$. A typical word $e \in \L_c^\eta$ may be written as $e=p_0\#p_1\#\cdots\#p_{m-1}\#$, where the $p_j$ do not contain $\#$, and satisfies the following conditions.
\begin{enumerate}[label=\textup{(L\arabic*)}, leftmargin=*, widest=1]
\item For all $0\le j<m$ we have $p_j\in\P_c$.
\item For all $0\le j<m-1$ we have $\rightcount(p_j)=\leftcount(p_{j+1}) < c$, and also $\leftcount(p_0)=\rightcount(p_{m-1})=0$.
\item There exists an index $j$ such that the word $p_j$ contains the letter $m-j$.
\item For all $0\le j\le m-1$, no letters of value greater than $m-j$ occur in $p_j$.
\end{enumerate}

Only the last two conditions require comment and they hold because if $e = \eta_c(w)$ then the number of panel words, $m$, in $e$ is equal to the maximum value occurring in $w$. This value must be encoded in some panel, say $p_j$, where it is encoded as $m-j$ by Observation~\ref{obs-where-encoded}, satisfying (L3). No $p_j$ can contain a letter of value greater than $m-j$ because this would correspond to a letter of value greater than $m$ in $w$, showing that (L4) is satisfied.

\begin{proposition}
\label{L1-L4-equals-Lcs}
The language $\L_c^\eta$ consists precisely of all words
\[
	e=p_0\#p_1\#\cdots\# p_{m-1}\#
\]
satisfying {\normalfont (L1)--(L4)}.
\end{proposition}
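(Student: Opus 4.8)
The forward inclusion --- that every word of $\L_c^\eta$ satisfies (L1)--(L4) --- was already argued in the paragraphs preceding the proposition, so the content is the reverse inclusion: given $e=p_0\#p_1\#\cdots\#p_{m-1}\#$ (with each $p_j$ free of $\#$) satisfying (L1)--(L4), I must produce a word $w\in\L_c^\infty$ with $\eta_c(w)=e$. The only candidate is $w=\Psi^{-1}(e)$, the nested application of the (extended) inverse splitting map, and the plan is to check in turn that (i) this application is legitimate, (ii) the resulting $w$ lies in $\L_c^\infty$, and (iii) $\eta_c(w)=e$.

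For (i) and (ii) I would peel panels off from the right, verifying by downward induction on $j$ that the partial reassembly $r_j=\psi^{-1}(p_j,\psi^{-1}(p_{j+1},\dots\psi^{-1}(p_{m-2},p_{m-1})\dots))$ is well-formed and lies in $\R_c$. The base case needs $p_{m-1}\in\R_c$: (P1) gives (R1) since a word over $\{1,\dots,c-1\}$ contains no shift of $(12\cdots c)^c$, (P3) gives (R2), and (P5) together with $\rightcount(p_{m-1})=0$ from (L2) gives (R3). For the inductive step, Proposition~\ref{prop-splitting-bijection} yields $r_j=\psi^{-1}(p_j,r_{j+1})\in\R_c$ once $p_j\in\P_c$ (from (L1)), $r_{j+1}\in\R_c$, and $\rightcount(p_j)=\leftcount(r_{j+1})$; the last equality holds because $\psi^{-1}$ copies left letters unchanged, so $\leftcount(r_{j+1})=\leftcount(p_{j+1})=\rightcount(p_j)$ by (L2). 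Then $w=r_0$, and $\leftcount(p_0)=0$ from (L2) means no left letter (and no right arrow, these being stripped by $\psi^{-1}$) survives into $w$, so $w\in\mathbb{P}^\ast$, whence (R1) places it in $\L_c^\infty$.

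It remains to prove (iii). Since $\psi$ and $\psi^{-1}$ are mutually inverse by Proposition~\ref{prop-splitting-bijection}, running $\eta_c$ on $w=r_0$ reproduces exactly the sequence $p_0,r_1,p_1,r_2,\dots$; in particular splitting $r_{m-1}$ returns $(p_{m-1},\epsilon)$, using (L4) to see every letter of $p_{m-1}$ has value at most $m-(m-1)=1$ and (L2) to see $p_{m-1}$ has no right letters, so that $p_{m-1}$ is a word of $1$'s and $\etaleft{1}$'s carrying no large factor. The crux is then that $\eta_c(w)$ has exactly $m$ panels, i.e. that the largest letter of $w$ equals $m$. For the upper bound I would track value propagation through the nested $\psi^{-1}$'s: a letter of value $v$ in $p_j$ is copied verbatim into $r_j$ and incremented once per subsequent unfolding, contributing a letter of value $v+j\le m$ to $w$ by (L4); each constant $2$ inserted by $\psi^{-1}$ just after a right letter of $p_j$ contributes value $2+j$, and since right letters occur only in $p_0,\dots,p_{m-2}$ by (L2), this is also at most $m$. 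For the lower bound, (L3) supplies an index $j$ with a letter of value $m-j$ in $p_j$, contributing a letter of value exactly $m$. Hence the encoding of $w$ stops after $p_{m-1}$ and equals $e$, so $e\in\L_c^\eta$.

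The main obstacle is precisely this value-tracking argument of (iii): pinning down that the panel count is neither too small (prevented by (L3)) nor too large (prevented by (L4) together with the location constraint (L2) on right letters) requires carefully following the interplay of the nested reassembly maps, the inserted $2$'s, and the decorations. Everything else --- the legitimacy of $\Psi^{-1}$ in (i) and the membership $w\in\L_c^\infty$ in (ii) --- is a mechanical consequence of Proposition~\ref{prop-splitting-bijection}, modulo some routine bookkeeping about how (R1)--(R3) and (P1)--(P5) interact for decorated letters.
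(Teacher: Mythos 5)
Your proof is correct and follows essentially the same route as the paper: iteratively apply the inverse splitting map (using Proposition~\ref{prop-splitting-bijection} to keep each $r_j$ in $\R_c$), observe that (L2) forces $r_0\in\mathbb{P}^\ast$, and show via (L3) and (L4) that the maximum letter of $r_0$ is exactly $m$, so that $\eta_c$ reproduces $e$. The only difference is that you fill in details the paper leaves implicit, notably the check that $\rightcount(p_j)=\leftcount(r_{j+1})$ (via $\leftcount(r_{j+1})=\leftcount(p_{j+1})$) and the value-propagation argument that the paper dismisses as ``a straightforward inductive argument.''
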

\begin{proof}
Suppose that $e=p_0\#p_1\#\cdots\# p_{m-1}\#$ satisfies (L1)--(L4). Set $r_m = \epsilon$, and then for $k$ from $m-1$ down to $0$ let $r_{k} = \psi^{-1}(p_k, r_{k+1})$. It follows from Proposition~\ref{prop-splitting-bijection} that, at each step, the conditions required for $\psi^{-1}$ to be defined on the given arguments apply, and so we obtain a sequence $r_{m-1},\dots,r_1,r_0$ of elements of $\mathcal{R}_c$. By (L2), the final word $r_0=w$ does not contain any decorated letters, and so in fact $w\in\mathcal{L}_c^\infty$ by (R1). Furthermore, the conditions (L3) and (L4) imply, via a straightforward inductive argument, that the maximum value occurring in $w$ is precisely $m$. It follows that the panel encoding $\eta_c(w)$ will contain precisely $m$ panel factors. These panel factors are obtained starting from $w=r_0$ and successively applying the splitting mapping $m-1$ times. By Proposition~\ref{prop-splitting-bijection}, the sequence of the remainder words obtained will be precisely $r_0,r_1,\dots,r_{m-1}$, while the sequence of the panel words will be $p_0,\dots,p_{m-1}$, so that $e=\eta_c(w)\in\mathcal{L}_c^\eta$, completing the proof.
\end{proof}

We conclude this section with its main result.

\begin{proposition}
\label{prop-regularity-encoding}
For every positive integer $c$, the language $\L_c^\eta$ is regular.
\end{proposition}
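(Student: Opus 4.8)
The plan is to verify that each of the four conditions (L1)--(L4), translated into a condition on a word $e = p_0\#p_1\#\cdots\#p_{m-1}\#$ over the finite alphabet $\Sigma$, defines a regular language, and then invoke Proposition~\ref{L1-L4-equals-Lcs} together with closure of the class of regular languages under finite intersection. So the heart of the proof is a sequence of four small automaton (or regular-expression) constructions, one per condition.

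First I would dispense with (L1). We already noted that $\P_c$ is regular; condition (L1) simply says that every maximal $\#$-free factor of $e$ lies in $\P_c$. Since a word $e$ can be written as $p_0\#p_1\#\cdots\#p_{m-1}\#$ precisely when it ends in $\#$ and we take the $p_j$ to be the $\#$-free blocks, (L1) is the statement that $e \in (\P_c\#)^*$, which is regular as the Kleene star of the concatenation of two regular languages. Next, (L2) is a local consistency condition relating consecutive blocks: $\rightcount(p_j) = \leftcount(p_{j+1}) < c$ for $0 \le j < m-1$, and $\leftcount(p_0) = \rightcount(p_{m-1}) = 0$. The crucial point—flagged already in the informal discussion—is that $\leftcount$ and $\rightcount$ of any word in $\P_c$ are bounded by $c-1$ (condition (P3)), so an automaton reading $e$ left to right can, within each block, count left letters and right letters using only finitely many states (it aborts if either count reaches $c$), remember the right-count of the block just finished, and when the next $\#$ is reached check that it matches the left-count of the block just read. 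The boundary conditions on $p_0$ and $p_{m-1}$ are handled by the automaton's treatment of its first block and of the block immediately preceding the final $\#$. Hence (L2) is regular.

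Conditions (L3) and (L4) are the ones that pin down the number of panel words to be exactly the maximum value $m$ occurring in $w$, and they are slightly more delicate because they refer to the \emph{index} $j$ of a block, which is unbounded. The key observation—this is where Observation~\ref{obs-where-encoded} does the work—is that these conditions only constrain the \emph{last} few blocks. Indeed (L4), ``no letter of value greater than $m-j$ occurs in $p_j$'', is automatically satisfied for every block except possibly the last $c-1$ of them, since letters of $\P_c$ have value at most $c-1$, so $p_j$ can violate (L4) only when $m-j < c-1$, i.e.\ when $p_j$ is among $p_{m-c+1},\dots,p_{m-1}$. Likewise (L3) asserts that \emph{some} block $p_j$ contains the letter $m-j$, and combined with (L4) this forces the situation near the top: reading the blocks from the \emph{end} backwards, $p_{m-1}$ may contain only $1$ (value $m-(m-1)=1$), $p_{m-2}$ only $1$'s and $2$'s, and so on, with equality of maximum value to block-height required to hold at least once in the top band. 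Thus both (L3) and (L4) can be checked by an automaton that maintains, for the $c-1$ most recently completed blocks, the maximum undecorated letter value occurring in each; this is a bounded amount of information (each value is in $\{0,1,\dots,c-1\}$ and there are at most $c-1$ of them), so only finitely many states are needed. Upon reading the final $\#$ the automaton accepts iff, numbering the last band of blocks $p_{m-k},\dots,p_{m-1}$, the maximum value in $p_{m-i}$ is at most $i$ for every $i$ (this is (L4) restricted to the band, the rest being automatic) and equals $i$ for at least one $i$ (this is (L3)).

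The main obstacle is conceptual rather than technical: recognising that (L3) and (L4), despite mentioning the unbounded index $j$, are genuinely ``bounded-memory'' conditions because they become vacuous outside the top $c-1$ blocks, and that (L2) is finite-state only because of the a priori bound $c-1$ on the number of decorations in any panel word. Once these points are in place, each of the four languages defined by (L1)--(L4) is regular by a routine automaton construction, their intersection is regular by closure properties, and by Proposition~\ref{L1-L4-equals-Lcs} this intersection is exactly $\L_c^\eta$, which completes the proof.
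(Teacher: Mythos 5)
Your proposal is correct and follows essentially the same approach as the paper's proof: show that each of (L1)--(L4) defines a regular language, intersect, and invoke Proposition~\ref{L1-L4-equals-Lcs}. The only differences are minor implementation choices --- you verify (L2) directly with a deterministic counter rather than complementing a nondeterministic violation-detector, and you handle (L3)/(L4) with a left-to-right sliding window over the last $c-1$ blocks rather than reading right to left via closure under reversal --- neither of which changes the substance.
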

\begin{proof}
We show that the languages defined by the individual conditions (L1)--(L4) are regular. The first follows easily because (L1) defines the language $\left( \P_c \# \right)^\ast$, which is regular because $\P_c$ is regular.

Condition (L2) also defines a regular language because of the bound on the values to be compared. Note that (L2) is violated exactly if there is some panel word containing more than $c$ right letters, or some pair of consecutive panel words where the number of left letters in the second panel word is not the same as the number of right letters in the first. Such a violation is easily recognised by a non-deterministic automaton (i.e., the automaton we describe accepts all words which fail this condition). The automaton accepts (i.e. identifies a violation) if it detects any left letters in the first panel word $p_0$. Otherwise it idles until it reaches some punctuation symbol. Then it counts right letters in the next panel word, accepting if that count exceeds $c$. If it still has not accepted, the automaton remembers this count (which is bounded by $c$) and proceeds to count left letters in the following panel word, again accepting if that does not match the stored count of right letters (or if the stored count is non-zero and there is no following panel word). Once the word has been completely read, if it has not been accepted it is rejected. If a violation occurs, the input word is accepted by some computation of this automaton, while if no violation occurs, no computation accepts the input word. Thus the words satisfying (L2) are the complement of the language accepted by this automaton, and so form a regular language.

Finally note that conditions (L3) and (L4) only present non-vacuous restrictions for the final $c-1$ panel words since $p_j\in\P_c$ for all indices $j$. We verify both conditions with a common automaton which reads encodings from right to left (recall that the reverse of a regular language is also regular). This automaton records the set of letters occurring in each of the panel words $p_{m-1},\dots,p_{m-c+1}$. Since this is a bounded amount of information, it can be stored in a state, and each condition implied by (L3) and (L4) is tested by direct inspection of the recorded information (i.e. by designating the appropriate states as accepting).
\end{proof}

\section{Marking, Transducing, and Greediness}

We have established that $\eta_c$ gives a bijection between $\L_c^{\infty}$ and $\L_c^{\eta}$ (Observation~\ref{obs-where-encoded}), and that the latter language is regular (Proposition~\ref{prop-regularity-encoding}). However, $\mathcal{L}_c^\eta$ is not sufficient for our counting purposes, because a permutation $\pi\in\Av(321)$ generally has several (and a variable number of) possible griddings, and it is the latter that are encoded in $\mathcal{L}_c^\eta$. We therefore need to pass to our distinguished, unique---i.e. greedy---griddings. In other words, we need to consider the set $\eta_c(\mathcal{L}_c^\infty\cap \mathcal{G}^\infty)$ and prove that it is regular. To do this we return to the domino encoding. In general, as noted previously, the domino encoding is not a suitable device for detecting regularity because of the consistency requirement between consecutive dominoes and the lack of bounds on the number of symbols in a domino. Fortunately, the properties we are interested in (initially, greediness; in the next section, finite bases; after that, well-quasi-order) depend only on a bounded number of letters per domino factor. Here we develop a technique, called \emph{marking}, that allows us to focus on such bounded sets of letters.

In a \emph{marked} permutation some of the entries, designated with overlines, are distinguished from the remaining entries. Generally the reason for adding marks to a permutation is to follow the marking with a test that identifies the presence or absence of some specific configuration among the marked elements. Notationally, marked permutations and sets of such permutations are indicated with overlines.

Because our encodings are entry-to-entry mappings or nearly so (in the case of the domino encoding which maps a single entry to two letters), it is easy to define marked versions of them (which we also distinguish with overlines): the encoding of a marked permutation is obtained by marking the letter(s) of the encoding that correspond to marked entries of the permutation. Essentially we double the size of the alphabet, introducing a marked version of each non-punctuation letter. For instance, the \emph{marked omnibus encoding} $\overline{\omega}$ maps marked gridded permutations to words whose letters are either marked or unmarked positive integers. The \emph{marked domino encoding} $\overline{\delta}$ similarly maps marked gridded permutations to $\{\oldpoint,\newpoint,\overline{\oldpoint},\overline{\newpoint},\#\}^\ast$.

We denote by $\overline{\L}_{c}^\eta$ the marked version of $\L_{c}^\eta$, i.e., the set of all marked words which would lie in $\L_{c}^\eta$ if the markings on their non-punctuation symbols were removed. Note that in these words letters can be both decorated (with arrows) and marked (with overlines). Fortunately, we have no need to actually depict this.

Typically we consider markings of gridded permutations such that a bounded number of entries in each cell are marked and then ask about the subpermutation formed by the marked entries. We begin with a simple example of the type of results we establish.

In Section~\ref{sec-omnibus}, we defined domino factors of arbitrary words in $\mathbb{P}^\ast$. Here we extend this definition to arbitrary marked words in $\overline{\mathbb{P}}^\ast$, though we are interested only in the marked letters: given $\overline{w}\in \overline{\mathbb{P}}^\ast$, the \emph{$i\th$ domino factor corresponding to its marked letters} is defined as $\overline{d}_i=\eval{\overline{w}}^{\{\overline{1},\overline{i+1}\}}$. Note that unmarked letters do not occur in $\overline{d}_i$.

\begin{proposition}
\label{prop-marked-domino}
Let $\overline{w}\in\overline{\L}_c^\infty$. The $i\th$ domino factor corresponding to the marked letters of $\overline{w}$ is completely determined by the subword of $\overline{\eta}_{c}(\overline{w})$ consisting of those letters in panel factors $\overline{p}_{i-c+1}$, $\overline{p}_{i-c+2}$, $\dots$, $\overline{p}_{i}$ that are marked or decorated (or both), along with the punctuation symbols between them.
\end{proposition}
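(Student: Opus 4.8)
The plan is to deduce the proposition from the marked analogue of Proposition~\ref{prop-delete-panels}, using Observation~\ref{obs-where-encoded} as a dictionary between the ``panel coordinates'' of $\overline{\eta}_c(\overline{w})$ and the values and positions of entries in $\overline{w}$. First I would record that, although Proposition~\ref{prop-delete-panels} is stated for the unmarked encoding $\eta_c$, its proof proceeds entirely through the inversion process $\Psi^{-1}$, which merely carries overlines along with the letters they decorate; the statement therefore holds verbatim for $\overline{\eta}_c$ on $\overline{\L}_c^\infty$.

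Write $\overline{\eta}_c(\overline{w})=\overline{p}_0\#\overline{p}_1\#\cdots\#\overline{p}_{m-1}\#$, fix the index $i$, and interpret $\overline{p}_j$ as empty for $j<0$. The key step is to take $u$ to be the subword of $\overline{\eta}_c(\overline{w})$ consisting of \emph{all} marked letters occurring in the contiguous block of panel words $\overline{p}_{i-c+1},\dots,\overline{p}_i$. By Observation~\ref{obs-where-encoded} a letter of value $v$ in panel $\overline{p}_k$ corresponds to an entry of value $k+v$ in $\overline{w}$, so the entries of $\overline{w}$ corresponding to the letters of $u$ all have values in $\{i-c+2,\dots,i+c-1\}$; moreover, again by Observation~\ref{obs-where-encoded}, every letter of value $i$ or $i+1$ in $\overline{w}$ is encoded in one of $\overline{p}_{i-c+1},\dots,\overline{p}_i$, so in particular the entries corresponding to $u$ include \emph{every} marked entry of value $i$ or $i+1$. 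Applying the marked form of Proposition~\ref{prop-delete-panels} to $u$ --- whose letters occupy the contiguous panels $\overline{p}_{i-c+1},\dots,\overline{p}_i$ --- the relative positions in $\overline{w}$ of the entries corresponding to $u$ are determined by the subword of $\overline{\eta}_c(\overline{w})$ formed from the letters of $u$, all decorated letters of $\overline{p}_{i-c+1},\dots,\overline{p}_i$, and the punctuation symbols between them. This subword is exactly the one named in the statement: it consists of precisely the letters of $\overline{p}_{i-c+1},\dots,\overline{p}_i$ that are marked or decorated (or both), together with the intervening occurrences of $\#$.

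It then remains to recover $\overline{d}_i=\eval{\overline{w}}^{\{\overline{i},\overline{i+1}\}}$ --- the left-to-right sequence of the marked $i$'s and marked $(i+1)$'s in $\overline{w}$ --- from this subword. The $\#$'s split it into $c$ groups, and since $i$ is known we know the $j\th$ group lists the marked-or-decorated letters of $\overline{p}_{i-c+j}$, hence the panel index of each of its letters. Observation~\ref{obs-where-encoded} then furnishes the value in $\overline{w}$ of the corresponding entry; here one checks that the value rule applies uniformly to decorated letters because the only decorated letters that can occur in a panel word --- $\etaright{1}$, $\etaleft{1}$, and $\etaleftright{1}$ --- all have value $1$ (the symbol $\etaleft{2}$ appears only in the auxiliary words $t_j$). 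Discarding the letters whose computed value is not $i$ or $i+1$, labelling each of the rest according to whether its value is $i$ or $i+1$, and reading off these labels in the relative order supplied by the previous step yields $\overline{d}_i$. I do not expect a genuine obstacle here, since the argument is essentially Proposition~\ref{prop-delete-panels} together with the bookkeeping of Observation~\ref{obs-where-encoded}; the one point that must be handled carefully is verifying that the subword retained in the statement is at once rich enough to license the appeal to Proposition~\ref{prop-delete-panels} (it is, being a superset of $u$ augmented by all decorated letters and punctuation of the block) and meagre enough that every letter it contains can be assigned an unambiguous value in $\overline{w}$ through the panel-index bookkeeping.
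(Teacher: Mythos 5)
Your proposal is correct and takes essentially the same route as the paper: the paper's proof likewise invokes Observation~\ref{obs-where-encoded} to localise the (marked) letters of values $i$ and $i+1$ to the panel words $\overline{p}_{i-c+1},\dots,\overline{p}_{i}$ and then cites Proposition~\ref{prop-delete-panels}, leaving implicit the bookkeeping details (the transfer of Proposition~\ref{prop-delete-panels} to the marked setting, and the recovery of values from panel indices) that you spell out explicitly.
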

\begin{proof}
By Observation~\ref{obs-where-encoded}, the letters $i$ and $i+1$ (and their marked versions) may only be encoded in the panel words $\overline{p}_{i-c+1}$, $\overline{p}_{i-c+2}$, $\dots$, $\overline{p}_{i}$. The result now follows immediately from Proposition~\ref{prop-delete-panels}.
\end{proof}

In fact we need stronger results than that above. We want to translate one encoding into another, restricting to marked entries. For this we use transducers. A \emph{transducer} is a finite-state automaton (not necessarily deterministic) that may produce output while reading. Thus given an input alphabet $\Sigma$ and an output alphabet $\Gamma$, each transition of a transducer has both an input symbol $a\in\Sigma\cup\{\epsilon\}$ and an output symbol $b\in\Gamma\cup\{\epsilon\}$. If the transducer $T$ has an accepting computation on reading $w$, then the output of that computation is the word formed by concatenating the output symbols associated with the transitions performed (in the same order as those transitions). No output is associated with non-accepting computations. Note that output is associated to a specific computation, so for non-deterministic transducers the same input word $w$ may yield multiple outputs. 

A simple and illustrative example is the transducer with input alphabet $\Sigma$ and output alphabet $\overline{\Sigma}$ which marks precisely one letter of its input. This transducer can be defined using an underlying automaton defined by the following three properties.
\begin{itemize}
\item It has an initial non-accepting state that has transitions to itself whose input/output pairs are $a/a$ for each $a \in \Sigma$.
\item It has a second state, which is accepting, that also has transitions to itself whose input/output pairs are $a/a$ for each $a \in \Sigma$.
\item There are transitions from the first to the second state whose input/output pairs are $a/\overline{a}$ for each $a \in \Sigma$.
\end{itemize}

We use functional notation, so if $T$ is a transducer and $X$ is a set of words (of the appropriate alphabet for $T$) then $T(X)$ is the set of words output by $T$ while reading the words of $X$ (which could be empty if none of the words of $X$ are accepted by the underlying automaton). As usual, when $X$ is a singleton we generally omit set braces and write $T(w)$. We utilise the following basic facts about transducers.
\begin{itemize}
\item
If $X$ is a regular language and $T$ is a transducer, then $T(X)$ is again a regular language.
\item
Conversely, if $Y$ is a regular language then the preimage $T^{-1}(Y)=\{x \st T(x)\cap Y\neq \emptyset\}$ is regular as well. 
\item
The composition of two transducers is again a transducer. 
\end{itemize}
For further details see, for example, Sakarovitch~\cite[Chapter IV]{Sakarovitch:Elements-of-aut:}.

For our next result we must make another definition. Given a marked word $\overline{w}\in\overline{\L}^\infty$, the \emph{domino encoding of the word formed by its marked letters} is
\[
	\overline{d}_0^{\newpoint}\#\overline{d}_1^{\newpoint}\#\cdots\#\overline{d}_m^{\newpoint}\#,
\]
where $m$ is the maximum value of a marked or unmarked letter of $\overline{w}$, each $\overline{d}_i$ is the $i\th$ domino factor corresponding to the marked letters of $\overline{w}$ defined previously, and $\overline{d}_i^{\newpoint}$ is the translation of $\overline{d}_i$ to the alphabet $\{\overline{\oldpoint}, \overline{\newpoint}\}$ formed by replacing $\overline{i}$ by $\overline{\oldpoint}$ and $\overline{i+1}$ by $\overline{\newpoint}$.

\begin{proposition}
\label{prop-panel-to-domino}
For every fixed integer $k$ there is a transducer that, given the panel encoding $\overline{\eta}_c(\overline{w})$ of a marked word $\overline{w}\in\overline{\L}_c^\infty$, where $\overline{w}$ contains at most $k$ marked copies of each symbol, outputs the domino encoding of the word formed by the marked letters of $\overline{w}$. Moreover, the output domino encoding has at most $k$ entries in each of its cells.
\end{proposition}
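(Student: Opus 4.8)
The plan is to build the required transducer as a composition of two transducers: one that converts the marked panel encoding $\overline{\eta}_c(\overline{w})$ into the marked omnibus encoding $\overline{\omega}(\pi^\gridded)$ (restricted to the data needed for the marked letters), and one that converts that omnibus encoding into the marked domino encoding of the word formed by the marked letters. Since the composition of transducers is a transducer, this suffices. The key fact that makes everything finite-state is Proposition~\ref{prop-marked-domino} (and its underpinning, Proposition~\ref{prop-delete-panels}): the $i\th$ marked domino factor $\overline{d}_i$ depends only on the marked-or-decorated letters of the $c$ consecutive panel words $\overline{p}_{i-c+1},\dots,\overline{p}_i$ together with the punctuation between them. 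Because $c$ and $k$ are fixed, each such window carries a bounded amount of information, so a transducer can hold a sliding window of the last $c$ panel words' marked/decorated content in its state.

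More concretely, I would have the transducer read $\overline{\eta}_c(\overline{w})=\overline{p}_0\#\overline{p}_1\#\cdots\#\overline{p}_{m-1}\#$ left to right, maintaining in its state the (bounded) list of marked and decorated letters, in order, of the most recent $c-1$ panel words; the decorated letters are needed because $\psi^{-1}$ uses the arrows to zip pieces together, and Proposition~\ref{prop-delete-panels} tells us that together with the marked letters they determine the relative positions of the marked letters of $w$. Upon finishing panel $\overline{p}_i$ (i.e., upon reading the $\#$ after it) the transducer has in hand exactly the window $\overline{p}_{i-c+1},\dots,\overline{p}_i$; it then simulates the (finitely many) relevant steps of $\psi^{-1}$ on this bounded window to compute $\overline{d}_i$ — using Observation~\ref{obs-where-encoded} to know that letters of value $i,i+1$ only live in these panels — translates $\overline{d}_i$ to the alphabet $\{\overline{\oldpoint},\overline{\newpoint}\}$ via the substitution $\overline i\mapsto\overline{\oldpoint}$, $\overline{i+1}\mapsto\overline{\newpoint}$, and emits $\overline{d}_i^{\newpoint}\#$ as output. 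It then shifts its window forward by discarding $\overline{p}_{i-c+1}$ and awaiting $\overline{p}_{i+1}$. The bound of $k$ marked copies of each symbol, together with $\leftcount,\rightcount<c$ for each panel (property (P3)), guarantees that the window content the transducer must remember is of bounded size, so only finitely many states are needed. At the end, when the last $\#$ has been read, the transducer has emitted $\overline{d}_0^{\newpoint}\#\cdots\#\overline{d}_m^{\newpoint}\#$ as desired; for the final $c-1$ indices the panels beyond $\overline{p}_{m-1}$ are treated as empty, which matches the convention that trailing domino factors encode single cells.

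The main obstacle, and the part that needs the most care, is verifying that the local simulation of $\psi^{-1}$ on a bounded window actually reproduces the correct marked domino factor — i.e., that no long-range information from outside the window $\overline{p}_{i-c+1},\dots,\overline{p}_i$ can affect the \emph{relative order} of the marked letters of values $i$ and $i+1$ in $w$. This is exactly what Proposition~\ref{prop-delete-panels} delivers: the undecorated letters outside $u$ are merely copied by $\psi^{-1}$ (possibly incremented) and play no role in the reassembly meshing, and the letters of later remainder words lose their decoration before they could interact with panel $\overline{p}_i$. So the argument is really a matter of assembling Observation~\ref{obs-where-encoded}, Proposition~\ref{prop-delete-panels}, and Proposition~\ref{prop-marked-domino} into the explicit finite-state device, checking that the state space (a bounded window of marked/decorated letters plus a small counter for matching right/left arrows across the window, all bounded by functions of $c$ and $k$) is indeed finite, and confirming that the output conventions for the trailing empty panels line up with the definition of the domino encoding of the marked letters.
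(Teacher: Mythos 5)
Your concrete construction is essentially the paper's proof: both build a sliding-window transducer that retains the marked/decorated ``stripped forms'' of the last $c$ panel words (a bounded amount of data, by the bound $k$ on marks per symbol and the bound $<c$ on decorated letters per panel from (P3)) and invokes Proposition~\ref{prop-marked-domino} to emit each domino factor from its window; the paper merely packages the same idea more cleanly as a composition of three sub-transducers (strip, window, lookup). Your opening sentence about routing through a ``marked omnibus encoding'' is a false start---the omnibus encoding lives over the infinite alphabet $\mathbb{P}$ so no transducer can produce it---but you abandon that plan immediately and the argument you actually give is correct.
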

\begin{proof}
Given a panel word, $\overline{p}$, its \emph{stripped form} is the subword consisting of all marked or decorated letters. Since the bound on the number of marked copies of any symbol implies a bound on the number of marked entries in each panel word, and the number of decorated entries in a panel word is bounded in any case, there is a finite set of stripped panel words that can arise from the panel encodings $\overline{\eta}_c(\overline{w})$. We view this set as a new alphabet. We then transduce $\overline{\eta}_c(\overline{w})$ into the word over this alphabet determined by replacing each panel word by the single letter corresponding to its stripped form, deleting (i.e., not transcribing) the punctuation symbols as we proceed. We call the resulting word the \emph{stripped form} of $\overline{\eta}_c(\overline{w})$.

Given an arbitrary alphabet $\Sigma$, a positive integer $c$, and a placeholder symbol $\bullet$ not in $\Sigma$, we form the alphabet $\Gamma=\left(\Sigma \cup \{ \bullet \}\right)^c$ and a transducer from $\Sigma^*$ to $\Gamma^*$ that maps $u \in \Sigma^*$ to a word $v$ in $\Gamma^*$ of the same length with $v(i) = (u(i-c+1), u(i-c+2), \dots, u(i))$ (replacing references to symbols of negative index by $\bullet$). Applying this transducer to the stripped form of $\overline{\eta}_c(\overline{w})$ gives a word whose symbols correspond to the sequences of $c$ consecutive stripped panel words of $\overline{\eta}_c(\overline{w})$. Proposition~\ref{prop-marked-domino} shows that the stripped forms of the marked panel words $\overline{p}_{i-c+1}, \dots, \overline{p}_{i}$ determine the  $i\th$ domino factor for the marked letters of $\overline{w}$, so one final transducer that replaces each such sequence by its corresponding domino factor completes the process.
\end{proof}

%

Up to this point we have worked primarily with $\L_c^{\eta}$, a regular language that is in one-to-one correspondence with $\L_c^\infty$, the language of words $w \in \mathbb{P}^\ast$ that satisfy the small ascent condition and contain no shift of $(12 \cdots c)^c$. Recall that $\G^{\infty}$ is the image of the greedy griddings of $321$-avoiding permutations under the omnibus encoding $\omega$. We define two additional languages:
\[
	\G_c^{\infty}=\G^\infty\cap\L_c^{\infty}
	\quad\text{and}\quad
	\G_c^{\eta}=\eta_c(\G_c^{\infty}).
\]
It is our principal goal in this section to prove that $\G_c^{\eta}$ is regular, i.e., that the panel encodings of omnibus encodings of greedy staircase griddings can be recognised by a finite automaton. By Observation~\ref{obs-greedy-omegaG} and the results of the previous section, this is equivalent to showing that the set of $\eta_c$ encodings of words in $\L_c^\infty$ that satisfy ($\omega$G1) and ($\omega$G2) can be recognised by a finite automaton. Note that these two conditions apply only to the first and last occurrence of each letter. Furthermore, the first (resp., last) occurrence of each letter in a word $w\in\L^\infty$ will also be the first (resp., last) occurrence of the corresponding letter in some panel word of $\eta_c(w)$. Therefore our first step is to describe a transducer which marks the first and last letter of each value in every panel word of $\eta_c(w)$.

\begin{proposition}
\label{prop-mark-first-and-last}
There is a transducer that, given $w \in \L_c^{\eta}$, outputs a marked panel encoding $\overline{w}$ in which the first and last entries of each value in each panel word are marked.
\end{proposition}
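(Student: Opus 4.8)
The plan is to exhibit the transducer explicitly, exploiting the fact that ``first and last occurrence of each value in each panel word'' is a property local to a single panel, and that there are only $c-1$ distinct underlying values (we regard $\etaleft{1}$, $\etaright{1}$, and $\etaleftright{1}$ all as having value $1$, since in the reassembly these are merely decorated copies of $1$). Writing the input $w\in\L_c^\eta$ in the form $p_0\#p_1\#\cdots\#p_{m-1}\#$, the transducer will process one panel at a time, resetting a ``local'' part of its state upon reading each $\#$, copy every non-punctuation letter to the output either unchanged or with an overline added, carry decorations along untouched, and copy each $\#$ verbatim.

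For first occurrences the machine can work deterministically. As part of its state it maintains a subset $A\subseteq\{1,\dots,c-1\}$, reset to $\emptyset$ at every $\#$, recording the values already seen in the current panel word. On reading a letter of value $a$: if $a\notin A$ then this is the first occurrence of $a$ in the current panel, so the letter is slated to be marked and $a$ is added to $A$; if $a\in A$ it is not slated to be marked by this rule.

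Last occurrences need a \emph{guess-and-verify}, since a transducer has no lookahead. The state carries a second subset $B\subseteq\{1,\dots,c-1\}$ of values for which the machine has already committed to having just read the last occurrence. On reading a letter of value $a$: if $a\in B$ the earlier commitment was mistaken, so this computation has no accepting continuation and dies; otherwise the machine nondeterministically either commits (slating the letter to be marked and adding $a$ to $B$) or does not. On reading $\#$ it requires $A=B$ (equivalently $A\subseteq B$, as $B\subseteq A$ always holds) and then resets both $A$ and $B$ to $\emptyset$. Because a commitment on $a$ followed by a later $a$ is fatal, in any accepting computation each commitment lands on the genuine last occurrence of its value, and the $A=B$ test at $\#$ forces every value occurring in the panel to be committed (exactly once). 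A letter is emitted in marked form precisely when it is slated to be marked by the first-occurrence rule or by a last-occurrence commitment; in particular a value that occurs only once in a panel, being simultaneously its own first and last occurrence, is marked once.

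Finally I would observe that all of this uses only bounded memory---a ``position within the current panel'' component together with the two bounded subsets $A$ and $B$---so the construction is genuinely a finite-state transducer, and that for each $w\in\L_c^\eta$ the only accepting computation is the one whose last-occurrence commitments are all correct; hence the transducer outputs exactly the marked word $\overline{w}\in\overline{\L}_c^\eta$ called for (removing the overlines recovers $w$, since letters and decorations are never altered). The single point that needs care is precisely the handling of last occurrences without lookahead, which is dealt with by the nondeterministic commitment, the ``$a\in B$ kills the run'' rule, and the $A=B$ check at each $\#$; everything else is routine bookkeeping. If one wishes the transducer to be defined only on $\L_c^\eta$, one may intersect its domain with this regular language, but that is not required by the statement.
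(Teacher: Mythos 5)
Your proof is correct and rests on the same key idea as the paper's: mark first occurrences deterministically, and mark last occurrences by a nondeterministic guess that is later verified (a further occurrence of a committed value kills the run, and at each $\#$ one checks that every value that has appeared in the panel has been committed). The only difference is presentational: the paper builds a three-state transducer for each fixed value $k\in\{1,\dots,c-1\}$ and composes $c-1$ of them, whereas you track all values simultaneously via the subsets $A$ and $B$; both yield finite-state transducers realising the same map.
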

\begin{proof}
It suffices to define the operation of such a transducer on a single panel word---the full transducer can then be built by returning to the initial state when a punctuation symbol is read. In turn it suffices to construct such a transducer for each individual value $k$ of a letter from 1 through $c-1$ (since these can then be composed to give the required transducer). The transducer defined by the following properties performs this task.
\begin{itemize}
\item
The initial state, \textsf{start}, is an accepting state.
\item
In any state the transducer transcribes all input that is not a $k$ (that is, outputs the same symbol as the input symbol) and remains in the current state.
\item
When (or if) the transducer first encounters a $k$, it outputs $\overline{k}$ and enters either state \textsf{seenfirst} or \textsf{seenlast} (non-deterministically).
\item
In state \textsf{seenfirst} (which is non-accepting) if the transducer encounters a $k$ it either transcribes it and remains in state \textsf{seenfirst}, or outputs $\overline{k}$ and enters state \textsf{seenlast}.
\item
In state \textsf{seenlast} (which is accepting) if the transducer encounters a $k$ then it fails, resulting in no output (this can be implemented by way of a state \textsf{fail} which has no further transitions).
\end{itemize}
Note that in the case $k=1$, some of the occurrences of $1$ in the input word may be decorated with arrows---the transducer retains those arrows as well as possibly adding marking.
\end{proof}

Propositions~\ref{prop-panel-to-domino} and \ref{prop-mark-first-and-last} give us the machinery we need in order to verify compliance with conditions ($\omega$G1) and ($\omega$G2), allowing us to prove the main result of the section.

\begin{proposition}
\label{prop-greediness-permutations}
For every positive integer $c$, the language $\G_c^{\eta}$ is regular.
\end{proposition}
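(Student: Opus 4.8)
The plan is to recognise $\G_c^\eta$ as a computable sublanguage of the regular language $\L_c^\eta$ (Proposition~\ref{prop-regularity-encoding}). By Observation~\ref{obs-greedy-omegaG}, $\G_c^\infty$ consists exactly of those $w\in\L_c^\infty$ satisfying ($\omega$G1) and ($\omega$G2), so $\G_c^\eta$ is the set of $e\in\L_c^\eta$ for which $\eta_c^{-1}(e)$ satisfies these two conditions, and it suffices to show that this set is regular. The crucial observation is that ($\omega$G1) relates the consecutive values $i+1$ and $i+2$, ($\omega$G2) relates the consecutive values $i$ and $i+1$, and both conditions refer only to the first and last occurrences of each value in $w$ --- that is, precisely to the information carried by the domino factors of the word formed by the first and last occurrences of each value.

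First I would apply the transducer of Proposition~\ref{prop-mark-first-and-last} to $\L_c^\eta$, obtaining a regular marked language $\overline{M}\subseteq\overline{\L}_c^\eta$ in which, for each $e=\eta_c(w)$, the first and last entry of every value in every panel word are marked. Since a value $i$ of $w$ is encoded in at most $c-1$ panel words (Observation~\ref{obs-where-encoded}), the associated marked word has at most $2(c-1)$ marked copies of each symbol, so Proposition~\ref{prop-panel-to-domino} (applied with $k=2(c-1)$) supplies a transducer converting $\overline{M}$ into a regular language $\overline{D}$ whose members are the domino encodings $\overline{d}_0^{\newpoint}\#\overline{d}_1^{\newpoint}\#\cdots\#\overline{d}_m^{\newpoint}\#$ of the words formed by the marked letters. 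I would then note that within any single panel word $p_a$ the encoded entries occur left to right in the same order as the corresponding letters of $w$, because $p_a^{+a}$ is a subword of $w$; hence the \emph{global} first (respectively last) occurrence of a value $v$ in $w$ is the first (respectively last) occurrence of $v$ in whichever panel word encodes it, and is therefore marked. Consequently, in $\overline{d}_v^{\newpoint}$ the leftmost $\overline{\oldpoint}$ marks the first occurrence of $v$ in $w$ and the rightmost $\overline{\oldpoint}$ the last, while the leftmost and rightmost $\overline{\newpoint}$ similarly mark the first and last occurrences of $v+1$.

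Armed with this dictionary, ($\omega$G1) for index $i$ becomes the statement that in $\overline{d}_{i+1}^{\newpoint}$ the first $\overline{\oldpoint}$, if any, precedes the first $\overline{\newpoint}$ --- equivalently, that this factor is empty, begins with $\overline{\oldpoint}$, or consists only of $\overline{\newpoint}$'s --- and ($\omega$G2) for index $i$ becomes the statement that $\overline{d}_i^{\newpoint}\notin\overline{\oldpoint}^{\,*}\,\overline{\newpoint}^{\,+}$, i.e.\ that some $\overline{\oldpoint}$ follows the first $\overline{\newpoint}$ whenever an $\overline{\newpoint}$ occurs. Each of these is a local condition on a single domino factor, and they apply only for $\overline{d}_j$ with $j\ge2$ and $j\ge1$ respectively; so the set $\overline{D}'$ of elements of $\overline{D}$ meeting all of them is regular. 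Writing $T$ for the composition of the two transducers above (a composition of transducers is again a transducer), we obtain $\G_c^\eta=\L_c^\eta\cap T^{-1}(\overline{D}')$, which is regular because preimages of regular languages under transducers are regular and regular languages are closed under intersection.

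I expect the main obstacle to be the verification in the second paragraph: that the purely \emph{per-panel} marking delivered by Proposition~\ref{prop-mark-first-and-last} genuinely pins down the \emph{global} first and last occurrence of each value in $w$, so that the marked domino factors $\overline{d}_i$ faithfully encode the data governing ($\omega$G1) and ($\omega$G2). This relies on the entry-to-entry nature of $\eta_c$ together with the order-preservation of occurrences of a fixed value within each panel word (Observation~\ref{obs-where-encoded}); once it is established, the remaining steps are routine applications of the closure properties of regular languages and transducers. (As an alternative to routing through the domino encoding one could reconstruct each relevant domino factor directly from a bounded sliding window of stripped panel words via Proposition~\ref{prop-marked-domino}, but going through the domino encoding keeps the argument cleanest.)
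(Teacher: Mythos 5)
Your proof is correct and follows essentially the same route as the paper: compose the transducer of Proposition~\ref{prop-mark-first-and-last} with that of Proposition~\ref{prop-panel-to-domino}, observe that the resulting marked domino encoding carries the global first and last occurrence of each value, translate ($\omega$G1) and ($\omega$G2) to local regular conditions on the individual dominoes, and finish with closure of regular languages under transduction preimage and intersection. The only differences are cosmetic---for instance, you phrase the per-domino conditions slightly differently, and you are a bit more careful than the paper's wording about exactly which dominoes each of the two conditions constrains (the ($\omega$G1) condition applies to $\overline{d}_j$ only for $j\ge 2$)---but the argument is the same.
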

\begin{proof}
We know that $\L_c^{\eta}$ is regular by Proposition~\ref{prop-regularity-encoding}. We prove the result by showing that $\G_c^{\eta}$ is the intersection of $\L_c^{\eta}$ and the preimage of a regular language under a transducer.

Given an encoding $\eta_c(w)\in\L_c^{\eta}$, we first pass it through the transducer of Proposition~\ref{prop-mark-first-and-last} to mark the first and last occurrences of each letter in every panel. By Observation~\ref{obs-where-encoded}, the letter $i$ in $w$ corresponds to an entry of one of the $c-1$ panel words $p_{i-c+1}$ to $p_{i-1}$, so by marking the first and last occurrences of each letter in every panel, we have marked at most $k=2(c-1)$ copies of each symbol. We then pass these marked encodings through the transducer of Proposition~\ref{prop-panel-to-domino} to obtain the domino encoding of these marked letters. Let $T$ denote this composition of transducers.

Note that $T$ produces precisely one output for each $w\in\L_c^{\eta}$, so we denote this output by $T(w)$, temporarily neglecting our convention that transducers output sets. Also note that the word $T(w)$ contains, in addition to other letters, letters corresponding to the first and last occurrences of each symbol in $w$. Thus $T(w)$ contains enough information to allow us to decide whether $w$ satisfies the conditions ($\omega$G1) and ($\omega$G2), though we still must show how to make this determination in a regular manner.

As observed above, every domino factor in $T(w)$ has at most $2(c-1)$ occurrences of each letter (a first and last occurrence of the letter in all $c-1$ panel words it could be encoded in). Thus there is a finite set, say $\Delta$, of dominoes which occur in the domino encodings output by $T$. We may therefore (by means of a simply transducer we will not describe explicitly) consider $\Delta$ itself to be the output alphabet and ignore the punctuation symbols (which are superfluous at this point), so that $T(w)\in \Delta^\ast$ for all $w\in\L_c^{\eta}$. 

Thus we may assume that we are given the actual sequence of dominos which corresponds to the word $T(w)$, and we must describe how to determine whether $T(w)$, or equivalently, $w$ itself, satisfies ($\omega$G1) and ($\omega$G2). These conditions translate to simple conditions on the dominoes of $T(w)$: each domino factor other than the first must begin with $\oldpoint$, and each domino factor other than the first and last must contain the subword $\newpoint\oldpoint$. Let $\mathcal{R}\subseteq\Delta^\ast$ denote the language of domino encodings which satisfy these conditions. Clearly $\mathcal{R}$ is regular, and it follows that $\mathcal{G}_c^{\eta}=\mathcal{L}_c^{\eta}\cap T^{-1}(\mathcal{R})$, completing the proof.
\end{proof}

We claim that $\G_c^{\eta}$ is the intersection of $\L_c^{\eta}$ (which is regular by Proposition~\ref{prop-regularity-encoding}) and $T^{-1}(\mathcal{R})$, where $\mathcal{R}$ is a regular language.

\section{Detecting Basis Elements}
\label{sec-detecting-basis-elts}


The results of the previous sections establish that, for each positive integer $c$, the set of $321$-avoiding permutations such that the omnibus encodings of their greedy griddings do not contain any shift of $(12 \dots c)^c$ is in bijective correspondence with the regular language $\G_c^{\eta}$. We have also observed in Proposition~\ref{prop-avoid-shifts} that for any proper subclass $\C\subsetneq\Av(321)$ there is a positive integer $c$ such that $\omega(\pi^\gridded) \in \G_c^{\infty}$ for all greedy griddings $\pi^\gridded$ of permutations $\pi\in\C$ and hence the panel encodings of these omnibus encodings are contained in $\G_c^{\eta}$. To complete our goal of showing that any such \emph{finitely based} class has a rational generating function, we need to show how to detect avoidance (or, equivalently, containment) of specified permutations within the panel encodings, while maintaining regularity.

The difficulty we are facing is that none of the three encodings we have used thus far---the omnibus encoding, its composition with the panel encoding, and the domino encoding---provide an easy way to test containment (as discussed briefly after Observation~\ref{obs-contain-omnibus-matchings}). To overcome this difficulty we resort again to the technique of marking, but this time we transduce the marked subpermutation to yet another encoding, namely the Dyck path encoding. This encoding---which was essentially described in the Introduction and illustrated on the right of Figure~\ref{fig-bij-Dyck}---consists of constructing a Dyck path whose outer corners lie just outside the left-to-right maxima of the permutation. We turn the resulting Dyck paths into words over the alphabet $\{ \textsf{u},\textsf{d}\}$ in the standard way. For instance, the Dyck path encoding of the permutation $31562487$ depicted in Figure~\ref{fig-bij-Dyck} is $\textsf{u}^3\textsf{d}^2\textsf{u}^2\textsf{d}\textsf{u}\textsf{d}^3\textsf{u}^2\textsf{d}^2$.

\begin{proposition}
\label{prop-domino-to-Dyck}
For every fixed positive integer $k$ there is a transducer that, given the domino encoding of a staircase gridding of a $321$-avoiding permutation $\pi$ with at most $k$ entries per cell, outputs the Dyck path corresponding to $\pi$.
\end{proposition}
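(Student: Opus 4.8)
The plan is to build the transducer as a composition of a few simple pieces, keeping in mind that the bound of $k$ entries per cell makes each domino factor a word of bounded length, so a domino encoding lives over a fixed finite alphabet of ``possible dominoes'' and we may think of the whole input as a word over this finite alphabet (ignoring the $\#$ punctuation, which merely delimits consecutive dominoes). First I would recall the geometric picture from Section~\ref{sec-staircase}: in a staircase gridding the cells are arranged along the two parallel rays $y=x$ and $y=x-1$, and the left-to-right maxima of $\pi$ are exactly the entries that lie in the ``upper'' cells in the appropriate sense---more precisely, an entry is a left-to-right maximum if and only if every entry occurring to its left in the reading order lies below it, and in a staircase gridding this can be read off locally from which cell it is in and its relative rank within the cell and the adjacent cell. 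The key structural fact is that the Dyck path corresponding to $\pi$ is obtained by drawing the path whose outer corners sit just outside the left-to-right maxima; since the left-to-right maxima of a $321$-avoiding permutation, listed in order, have both increasing positions and increasing values, the Dyck path is completely determined by the \emph{number} of left-to-right maxima falling in each cell together with the number of non-left-to-right-maximum entries, processed cell by cell from the first cell onward.

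Carrying this out, the first transducer reads the domino encoding and, for each cell in turn, extracts from the pair of overlapping domino factors that reference that cell the data needed to compute two numbers: how many entries of that cell are left-to-right maxima of $\pi$, and how many are not. Because each cell is referenced by exactly two consecutive domino factors (the one before and the one after the corresponding punctuation mark), and each such factor has length at most $2k$, this is a bounded computation that a finite-state transducer can perform by remembering the previous domino factor while reading the next one. The subtlety is that whether an entry of cell $i$ is a left-to-right maximum depends on comparing it against entries of cell $i-1$ (for horizontally adjacent cells) or recognising that it automatically dominates everything earlier (for vertically adjacent cells, by the third and fourth staircase conditions); all of this comparison information is present in the two domino factors adjacent to cell $i$, so again only bounded memory is needed. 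The transducer outputs, cell by cell, a short block of $\textsf{u}$'s and $\textsf{d}$'s: schematically, for each left-to-right maximum in the cell it emits the up-steps that raise the path to just above that maximum's value, and for each non-maximum entry it emits a single down-step, with the precise bookkeeping arranged so that the concatenation over all cells is exactly the Dyck path word of $\pi$ as defined in the Introduction. A second, trivial transducer (or the same one) simply discards the $\#$ symbols.

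I expect the main obstacle to be the careful verification that the local information available in two adjacent domino factors genuinely suffices to determine, for each entry, whether it is a left-to-right maximum of the \emph{whole} permutation $\pi$, and to determine the exact number of up-steps and down-steps contributed by each cell so that the pieces glue into a legal Dyck path of the right shape. This requires unwinding the staircase gridding axioms: one must check that an entry in cell $i$ is a left-to-right maximum precisely when it exceeds all entries of cell $i-1$ (those being the only earlier entries that could exceed it, since everything in cells $\le i-2$ lies below cell $i$ and everything in cells $\ge i+1$ lies to its right or above), and that this comparison is recorded in the $(i-1)\first$ domino factor. Once this is established the rest is routine: each of the maps involved is a length-respecting, bounded-lookback rewriting, hence realisable by a finite transducer, and composing with the (trivial) punctuation-stripping transducer and appealing to the closure of transducers under composition finishes the proof.
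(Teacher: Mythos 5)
Your overall structure—treat domino factors as letters of a finite alphabet, keep a bounded window of consecutive factors, and output the Dyck path segment corresponding to each window—is the right shape and is the same basic idea as the paper's proof. But there is a concrete error in the crucial step where you decide which entries are left-to-right maxima, and it causes your window to be one factor too narrow.

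You assert that an entry in cell $i$ is a left-to-right maximum precisely when it exceeds all entries of cell $i-1$, on the grounds that ``everything in cells $\ge i+1$ lies to its right or above.'' That parenthetical does not rule out being \emph{to the left and above}, and for even $i$ that case actually occurs: cell $i+1$ is \emph{vertically} adjacent to cell $i$, so its entries lie above cell $i$ but may lie to the left of some entries of cell $i$. Such an entry of cell $i+1$ blocks the cell-$i$ entry from being a left-to-right maximum even if it beats all of cell $i-1$. A concrete example is $\pi = 125364$ with the greedy gridding: cell $1 = \{1,2\}$, cell $2 = \{3,4\}$ (at positions $4,6$), cell $3 = \{5,6\}$ (at positions $3,5$). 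The cell-$2$ entry with value $3$ exceeds every entry of cell $1$, yet it is not a left-to-right maximum because the cell-$3$ entry with value $5$ sits to its left. So the correct criterion for even $i$ is: above the top of cell $i-1$ \emph{and} to the left of all of cell $i+1$. The latter condition lives in $d_i$, so your claim that the comparison is ``recorded in the $(i-1)$st domino factor'' alone is wrong, and the brief claim that two adjacent domino factors suffice is not actually supported by the reasoning you give.

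The paper's proof sidesteps this by working with a window of \emph{three} consecutive translated domino factors $d_{2i-1}^{\newpoint}, d_{2i}^{\newpoint}, d_{2i+1}^{\newpoint}$ (padded with empties at the ends) and emitting, for each triple, the portion of the Dyck path determined by the left-to-right maxima in cells $2i$ and $2i+1$. This window is wide enough to determine the relative order of all entries in cells $2i-1$ through $2i+2$, which is exactly what is needed to identify the maxima in cells $2i$ and $2i+1$ \emph{and} to locate them relative to the entries below and to their right (in cells $2i$ and $2i+2$), as well as to identify the path's entry point from the final entry of cell $2i-1$. Processing two cells at a time is also what makes ``the part of the Dyck path for this window'' a well-defined contiguous segment, whereas the single-cell segmentation you describe is awkward because cells $2i$ and $2i+1$ share a column range (and $2i-1$, $2i$ share a row range), so the $\textsf{u}$ and $\textsf{d}$ steps do not cleanly apportion to individual cells. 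Relatedly, your claim that the path is determined by the \emph{number} of maxima and non-maxima per cell is too coarse; one needs the actual interleaving of positions and values across adjacent cells, which is exactly what the triple of dominoes records. Fixing your proof would essentially amount to widening the sliding window to three factors and grouping cells in pairs, at which point you recover the paper's argument.
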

\begin{proof}
As in the proof of Proposition~\ref{prop-panel-to-domino} the bound on the number of entries per cell means that we may view the domino factors as letters themselves coming from a finite alphabet. In fact, borrowing another idea from the same proposition, we can view triples of consecutive translated domino factors $d_{2i-1}^{\newpoint}$, $d_{2i}^{\newpoint}$, $d_{2i+1}^{\newpoint}$ (including padding at the beginning and end by empty domino factors) as individual letters. The reason for doing this is that we will show that we can compute the part of the Dyck path determined by the left-to-right maxima lying in the $2i\th$ and $(2i+1)\first$ cells from such a triple. Thus our transducer need only examine these triples in turn, and output the appropriate segment of a Dyck path for each one. This is illustrated in Figure~\ref{fig-domino-to-Dyck}. 

\begin{figure}
\[
\begin{array}{ccccc}
	\newpoint\oldpoint\oldpoint\newpoint\newpoint\newpoint
	&
	\oldpoint\oldpoint\newpoint\newpoint\oldpoint\newpoint\oldpoint
	&
	\oldpoint\oldpoint\newpoint\newpoint\oldpoint
	\\[4pt]
    \begin{tikzpicture}[scale=.25, baseline=0]
	    \plotpartialpermhollow{1/2,2/3};
	    \plotpartialperm{3/1,4/4,7/5,9/6};
		\plotpermbox{1}{1}{2}{6}; 
		\node [above left] at (2.75,0.35) {\scriptsize $1$};
		\plotpermbox{3}{1}{9}{6}; 
		\node [above left] at (9.75,0.35) {\scriptsize $2$};
		\draw [->, thick] (-0.1,0.5)--(-0.1,6.5);
    \end{tikzpicture}
&
    \begin{tikzpicture}[scale=.25, baseline=0]
	    \plotpartialpermhollow{3/1,4/4,7/5,9/6};
	    \plotpartialperm{5/7,6/8,8/11};
		\plotpermbox{3}{1}{9}{6}; 
		\node [above left] at (9.75,0.35) {\scriptsize $2$};
		\plotpermbox{3}{7}{9}{11}; 
		\node [above left] at (9.75,6.35) {\scriptsize $3$};
		\draw [->, thick] (2.5,-0.1)--(9.5,-0.1);
    \end{tikzpicture}
&
    \begin{tikzpicture}[scale=.25, baseline=0]
	    \plotpartialpermhollow{5/7,6/8,8/11};
	    \plotpartialperm{10/9,12/10};
		\plotpermbox{3}{7}{9}{11}; 
		\node [above left] at (9.75,6.35) {\scriptsize $3$};
		\plotpermbox{10}{7}{13}{11}; 
		\node [above left] at (13.75,6.35) {\scriptsize $4$};
		\draw [->, thick] (1.9,6.5)--(1.9,11.5);
    \end{tikzpicture}
&
	\begin{tikzpicture}[scale=.25, baseline=0]
		\draw [white] (0,0.5)--(0,11.5);
		\node at (0,6) {$\longrightarrow$};
	\end{tikzpicture}
&
	\begin{tikzpicture}[scale=.25]
		\plotpartialperm{1/2,2/3,3/1,4/4,5/7,6/8,7/5,8/11,9/6,10/9,12/10};
		\plotpermbox{1}{1}{2}{6}; 
		\node [above left] at (2.75,0.35) {\scriptsize $1$};
		\plotpermbox{3}{1}{9}{6}; 
		\node [above left] at (9.75,0.35) {\scriptsize $2$};
		\plotpermbox{3}{7}{9}{11}; 
		\node [above left] at (9.75,6.35) {\scriptsize $3$};
		\plotpermbox{10}{7}{13}{11}; 
		\node [above left] at (13.75,6.35) {\scriptsize $4$};
		\draw [very thick, rounded corners=0.01, line cap=round] (2.5,3.5)--(3.5,3.5)--(3.5,4.5)--(4.5,4.5)--(4.5,7.5)--(5.5,7.5)--(5.5,8.5)--(7.5,8.5)--(7.5,11.5)--(9.5,11.5);
    \end{tikzpicture} 
\\[4pt]
d_1&d_2&d_3
\end{array}
\]
\caption{Upon reading the triple of domino factors shown in the top left, the transducer of Proposition~\ref{prop-domino-to-Dyck} can compute the partial permutation shown on the right, and output the steps of the Dyck path passing through the $2\nd$ and $3\rd$ cells, $\textsf{d}\textsf{u}\textsf{d}\textsf{u}\textsf{u}\textsf{u}\textsf{d}\textsf{u}\textsf{d}\textsf{d}\textsf{u}\textsf{u}\textsf{u}\textsf{d}\textsf{d}$.}
\label{fig-domino-to-Dyck}
\end{figure}

To justify the claim we note that the information encapsulated in $d_{2i-1}^{\newpoint}$, $d_{2i}^{\newpoint}$, $d_{2i+1}^{\newpoint}$ completely determines the relative values and positions of all entries in the $(2i-1)\first$ through $(2i+2)\nd$ cells. In particular it determines the left to right maxima in the $(2i)\th$ and $(2i+1)\first$ cells, and their relative positions with respect to the entries to the right and below them, all of which can be found in the $(2i)\th$ and $(2i+2)\nd$ cells. The final entry in the $(2i-1)\first$ cell (which is automatically a left to right maximum) indicates the entry point of the Dyck path into the $(2i)\th$ cell. (If the $(2i-1)\first$ cell is empty the path enters through the bottom left corner.) From the entry point, the path proceeds as dictated by the left to right maxima in the $(2i)\th$ and $(2i+1)\first$ cells and the entries to the right and below them.
\end{proof}

For any $\beta \in \Av(321)$ and positive integer $c$ we now define $\G_{c, \ge\beta}^{\eta}$ to be the set of all encodings $\eta_c(\omega(\pi^\gridded))$ such that
\begin{itemize}
\item $\pi^\gridded$ is the greedy encoding of $\pi$,
\item $\pi$ contains $\beta$, and
\item $\omega(\pi^\gridded)$ avoids all shifts of $(12\cdots c)^c$.
\end{itemize}
The transducer from our previous proposition shows that this is a regular language:

\begin{proposition}
\label{prop-single-base-regular}
The language $\G_{c, \ge\beta}^{\eta}$ is regular.	
\end{proposition}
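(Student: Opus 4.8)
The strategy is to express $\G_{c,\ge\beta}^\eta$ as a Boolean combination of $\G_c^\eta$ (which is regular by Proposition~\ref{prop-greediness-permutations}) and the preimage under a suitable transducer of a regular ``contains $\beta$'' language over Dyck words. Concretely, I would compose three pieces of machinery already available. First, a marking transducer: fix $k=|\beta|$, and use a transducer that non-deterministically marks at most $k$ entries in each panel word of $w\in\G_c^\eta$; by Observation~\ref{obs-where-encoded} this corresponds to marking at most $k$ entries of each value in $\omega(\pi^\gridded)$, hence (since each cell of the staircase gridding contributes a contiguous block of values, but more carefully, since each entry of $\omega(\pi^\gridded)$ sits in exactly one cell) at most $k$ entries per cell, after possibly restricting marks appropriately. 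Second, compose with the panel-to-domino transducer of Proposition~\ref{prop-panel-to-domino} to obtain the marked domino encoding of the subpermutation formed by the marked entries; because the gridding is greedy and has the bounded-marking property, the marked subpermutation is itself a staircase-gridded $321$-avoiding permutation (with at most $k$ entries per cell). Third, compose with the domino-to-Dyck transducer of Proposition~\ref{prop-domino-to-Dyck} to obtain the Dyck-path encoding of that marked subpermutation.

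Call the composite transducer $T$. By the closure facts quoted in the section on transducers, $T$ is again a transducer, and $T(\G_c^\eta)$ is regular. Now the key point: $\pi$ contains $\beta$ if and only if \emph{some} marking of $\pi^\gridded$ by at most $k$ entries per cell selects a subpermutation that is exactly $\beta$. Indeed, by Observation~\ref{obs-order-pres}-type reasoning, a subpermutation of a gridded permutation corresponds to a sub-collection of points, and since $|\beta|=k$ such a subpermutation touches at most $k$ entries in any cell; conversely any occurrence of $\beta$ can be realized this way. Let $D_\beta$ be the (single) Dyck word encoding $\beta$. Then $w\in\G_c^\eta$ encodes a permutation containing $\beta$ precisely when $D_\beta \in T(w)$, i.e. when $w\in T^{-1}(\{D_\beta\})$. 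Since $\{D_\beta\}$ is a finite, hence regular, language, $T^{-1}(\{D_\beta\})$ is regular, and therefore
\[
	\G_{c,\ge\beta}^\eta \;=\; \G_c^\eta \,\cap\, T^{-1}\!\bigl(\{D_\beta\}\bigr)
\]
is regular as an intersection of two regular languages.

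There are two points requiring care. The first is bookkeeping about how many marks land in a single cell: the marking transducer of the marking section marks per \emph{panel word}, not per cell, so I need to confirm that marking at most $k$ letters total of each value (equivalently, at most $k$ per panel word across the panels encoding a given value) suffices to realize every occurrence of $\beta$, and that this still gives at most $k$ marked entries per cell so that Propositions~\ref{prop-panel-to-domino} and~\ref{prop-domino-to-Dyck} apply. Since $|\beta|=k$, an occurrence of $\beta$ uses at most $k$ entries altogether, so \emph{a fortiori} at most $k$ in any cell and at most $k$ of any value; one just needs a marking transducer that places at most $k$ marks overall, which is a trivial modification of the ``mark one letter'' transducer (iterate it $k$ times, or build a counter up to $k$). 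The second, and the real subtlety, is that a marking may select a subpermutation that is only \emph{order isomorphic} to $\beta$ rather than literally $\beta$ as a gridded object — but this is exactly what ``contains'' means, and the Dyck encoding is invariant under order isomorphism, so outputting $D_\beta$ is the correct test. The main obstacle, then, is not any single deep step but assembling the chain of transducers with matching input/output alphabets (marked panel words $\to$ marked domino encodings of the marked subword $\to$ Dyck words) and checking that the ``greedy'' constraint survives marking — which it does, since marking alters no letters and $\G_c^\eta$ is already pinned down by Proposition~\ref{prop-greediness-permutations}, so we simply intersect with it at the end rather than re-verifying greediness downstream.
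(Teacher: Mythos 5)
Your proposal is correct and follows essentially the same route as the paper: mark $k=|\beta|$ letters with a non-deterministic transducer, compose with the panel-to-domino transducer of Proposition~\ref{prop-panel-to-domino} and the domino-to-Dyck transducer of Proposition~\ref{prop-domino-to-Dyck}, then take the preimage of the singleton Dyck word for $\beta$ and intersect with $\G_c^\eta$. The only cosmetic differences are that the paper marks exactly $k$ letters rather than at most $k$ (equivalent here, since outputting $D_\beta$ forces exactly $k$ marks), and your explicit intersection with $\G_c^\eta$ is in fact a slight improvement in precision over the paper's phrasing.
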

\begin{proof}
Let $k$ denote the length of $\beta$. There is a non-deterministic transducer that takes words in $\L_{c}^{\eta}$ as input and outputs marked versions that contain exactly $k$ marked letters.  Denote by $T$ the composition of that transducer and the one defined in Proposition~\ref{prop-panel-to-domino} (which allows for up to $k$ copies of each symbol) followed by the transducer described in Proposition~\ref{prop-domino-to-Dyck}. Further let $X_{\beta}$ denote the singleton set whose only element is the word over the alphabet  $\{\textsf{u}, \textsf{d}\}$ that represents the Dyck path corresponding to $\beta$. 

Since $T$ takes as input the panel encoding of the greedy gridding of a $321$-avoiding permutation, marks exactly $k$ letters, and outputs the Dyck path encoding of the marked letters, it follows that the panel encoding of the greedy gridding of some permutation $\pi\in\Av(321)$ belongs to $T^{-1}(X_\beta)\cap \G_c^{\eta}$ if and only if $\beta$ is contained in $\pi$. Thus $\G_{c,\ge\beta}^{\eta} = T^{-1}(X_\beta)$ and, being the preimage of a regular language (any singleton is regular) under a transducer, is itself regular.
\end{proof}

%
%

We have finally reached the point where we can prove the first half of our main result.

\newenvironment{proof-of-321-rational-fb}{\medskip\noindent {\it Proof of Theorem~\ref{thm-321-rational} (for finitely based subclasses).\/}}{\qed\bigskip}
\begin{proof-of-321-rational-fb}
Suppose that the basis of a class $\C$ is the finite, nonempty, set $B$. Take any positive integer $c$ such that $\omega(\pi^\gridded)\in \G_c^{\infty}$ for all greedy griddings $\pi^\gridded$ of permutations in $\C$ (the existence of such a value of $c$ is guaranteed by Proposition~\ref{prop-avoid-shifts}). Then the set, $\G_{c,\C}^{\eta}$, of panel encodings of greedy griddings of members of $\C$ is
\[
	\G_{c,\C}^{\eta} = \G_{c}^{\eta} \setminus \bigcup_{\beta \in B} \G_{c, \ge\beta}^{\eta}. 
\]
This is a regular language owing to Propositions~\ref{prop-greediness-permutations} and \ref{prop-single-base-regular} and the closure of the family of regular languages under Boolean operations. Therefore $\C$ is in one-to-one correspondence with a regular language. Moreover, if $\pi \in \C$ has length $n$ then its image under the correspondence contains $n$ non-punctuation symbols. The generating function of a regular language over commuting variables corresponding to its letters is a rational function and we can obtain the generating function for $\C$ from that for $\G_{c,\C}^{\eta}$ by replacing the variable corresponding to the punctuation symbol $\#$ by $1$, and those variables corresponding to non-punctuation symbols by $x$, so the generating function of $\C$ is rational.
\end{proof-of-321-rational-fb}

\section{Well-Quasi-Ordered Subclasses}
\label{sec-wpo}

It remains to prove the second half of Theorem~\ref{thm-321-rational}, namely that every well-quasi-ordered subclass of $321$-avoiding permutations has a rational generating function. This proof breaks naturally into two parts. First we identify a necessary and sufficient condition for a subclass of $\Av(321)$ to be well-quasi-ordered. Then we show, using arguments similar to those in the preceding section, that this condition implies regularity of the corresponding languages. For the first part we identify a particular infinite antichain $U \subseteq \Av(321)$. Obviously, for a class $\C \subseteq \Av(321)$ to be well-quasi-ordered, $\C \cap U$ must be finite. It happens that this condition is also sufficient. We begin with some preparatory remarks.

A permutation $\pi$ is said to be \emph{sum decomposable} if it can be written as a concatenation $\alpha\beta$ where every entry in the prefix $\alpha$ is smaller than every entry in the suffix $\beta$. If $\pi$ has no non-trivial partition of this form then it is said to be \emph{sum indecomposable}. We may in this way interpret an arbitrary permutation as a word over its sum indecomposable components (\emph{sum components} for short).

Moving to a more general context, given a poset $(P,\le)$, the \emph{generalised subword order} on $P^\ast$ is defined by $v\le w$ if there are indices $1\le i_1<i_2<\cdots<i_{|v|}\le|w|$ such that $v(j)\le w(i_j)$ for all $j$. The following well-known result connects the well-quasi-ordering of $P$ and $P^\ast$.

\newtheorem*{higmans-lemma}{Higman's Lemma~\cite{higman:ordering-by-div:}}
\begin{higmans-lemma}
If $(P,\le)$ is well-quasi-ordered then $P^*$, ordered by the generalised subword order, is also well-quasi-ordered.
\end{higmans-lemma}

Returning to the context of permutations (and the containment order defined on them), Higman's Lemma easily implies the following result. (For more details we refer the reader to Atkinson, Murphy, and Ru\v{s}kuc~\cite[Theorem 2.5]{atkinson:partially-well-:}.)

\begin{proposition}
\label{prop-wqo-sum-components}
Let $\C$ be a permutation class. If the sum indecomposable members of $\C$ are well-quasi-ordered, then $\C$ is well-quasi-ordered.
\end{proposition}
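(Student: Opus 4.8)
The plan is to realise each permutation of $\C$ as a word over the poset of sum indecomposable members of $\C$ and then invoke Higman's Lemma. Write $P$ for the set of sum indecomposable permutations lying in $\C$, equipped with the restriction of the containment order. Since $\C$ is downward closed, every sum component of a member of $\C$ is itself a member of $\C$, so the unique decomposition $\pi = \pi_1 \oplus \pi_2 \oplus \cdots \oplus \pi_k$ of $\pi\in\C$ into sum indecomposable components produces a word $\operatorname{comp}(\pi) = \pi_1\pi_2\cdots\pi_k \in P^\ast$. The hypothesis of the proposition is precisely that $(P,\le)$ is well-quasi-ordered, so Higman's Lemma guarantees that $P^\ast$, ordered by the generalised subword order, is well-quasi-ordered.

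The one substantive point is an embedding lemma: if $\operatorname{comp}(\sigma)\le\operatorname{comp}(\pi)$ in the generalised subword order on $P^\ast$, then $\sigma\le\pi$ in the containment order on permutations. I would prove this directly. Write $\sigma = \sigma_1\oplus\cdots\oplus\sigma_m$ and take indices $i_1 < i_2 < \cdots < i_m$ with $\sigma_j \le \pi_{i_j}$ for each $j$. Fix an embedding of each $\sigma_j$ into the block $\pi_{i_j}$ of $\pi$. Because the blocks $\pi_1,\dots,\pi_k$ are arranged as a direct sum---each lying entirely above and to the right of its predecessors---the union of these individual embeddings is order isomorphic to $\sigma_1\oplus\cdots\oplus\sigma_m = \sigma$, which witnesses $\sigma\le\pi$.

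With the lemma in hand the proposition follows immediately. Given any infinite sequence $\pi^{(1)},\pi^{(2)},\dots$ of permutations in $\C$, the corresponding sequence $\operatorname{comp}(\pi^{(1)}),\operatorname{comp}(\pi^{(2)}),\dots$ lies in $P^\ast$, which is well-quasi-ordered, so there exist indices $a<b$ with $\operatorname{comp}(\pi^{(a)})\le\operatorname{comp}(\pi^{(b)})$; by the lemma, $\pi^{(a)}\le\pi^{(b)}$. As every infinite sequence in $\C$ therefore contains such a pair, $\C$ is well-quasi-ordered.

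I do not expect a genuine obstacle here; the argument is routine. The only things requiring care are that the hypothesis must be applied to the correct poset---the sum indecomposables of $\C$ under \emph{permutation containment}, not some auxiliary order---and the verification that the coordinatewise embeddings in the lemma actually glue into a single embedding, which is exactly where the direct-sum (``stacked blocks'') structure is essential. A fuller treatment of this standard deduction from Higman's Lemma appears in Atkinson, Murphy, and Ru\v{s}kuc~\cite[Theorem 2.5]{atkinson:partially-well-:}, cited in the statement.
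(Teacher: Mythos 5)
Your proof is correct and follows exactly the route the paper has in mind: encode each permutation as a word over the poset of its sum indecomposable members and apply Higman's Lemma, together with the observation that generalised-subword containment of component words implies permutation containment. The paper does not write out the details but simply cites Higman's Lemma and the same Atkinson--Murphy--Ru\v{s}kuc reference you invoke, so your write-up is a faithful expansion of the intended argument.
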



The identification of the antichain $U$ requires a short digression related to a connection between permutations and graphs. Given a permutation $\pi$, the \emph{inversion graph} (more commonly called a \emph{permutation graph} in the graph theory literature) corresponding to $\pi$ is the graph $G_\pi$ on the (unlabelled) vertices $\{(i,\pi(i))\}$ in which $(i,\pi(i))$ and $(j,\pi(j))$ are adjacent if they form an inversion, i.e., $i<j$ and $\pi(i)>\pi(j)$. As each entry of $\pi$ corresponds to a vertex of $G_\pi$, we commit a slight abuse of language by referring (for example) to the degree of an entry of $\pi$ when we mean the degree of the corresponding vertex of $G_\pi$. Note that the graph $G_\pi$ is connected if and only if $\pi$ is sum indecomposable.

If $\sigma$ is a subpermutation of $\pi$, then the induced subgraph of $G_\pi$ on the entries corresponding to a copy of $\sigma$ is isomorphic to $G_\sigma$. Thus the image of a permutation class under the mapping $\pi \mapsto G_\pi$ is a class of inversion graphs closed under taking induced subgraphs. More importantly for our purposes, the inverse image of an antichain of graphs (in the induced subgraph ordering) is an antichain of permutations. Note incidentally that this is true even though the mapping $\pi \to G_\pi$ is not injective (in particular, $G_{\pi}\cong G_{\pi^{-1}}$ for all permutations $\pi$).

We begin by considering permutations whose graphs are isomorphic to paths on $n\ge 4$ vertices. By direct construction it is easy to verify that there are precisely two such permutations of each length, which we call \emph{increasing oscillations}:
\[
\begin{array}{ll}
2  4  1  6  3  8  5  \cdots n  (n-3)  (n-1),
\quad
3  1  5  2  7  4  9  \cdots (n-4)  n (n-2)
& \text{if $n$ is even, and} \\
2  4  1  6  3  8  5  \cdots (n-4)  n  (n-2),
\quad
3  1  5  2  7  4  9  \cdots n  (n-3) (n-1)
& \text{if $n$ is odd.} 
\end{array}
\]
Inspection of these permutations shows that inversion graphs cannot contain induced cycles on $5$ or more vertices.  As occurrences of $321$ in permutations correspond to triangles in their inversion graphs, it follows that the $321$-avoiding permutations correspond to bipartite inversion graphs. These graphs have previously been studied in the context of well-quasi-order by Lozin and Mayhill~\cite{lozin:canonical-antic:}, although we do not require their results here.

A \emph{double-ended fork} is the graph formed from a path by adding four vertices of degree one, two adjacent to one end of the path and two adjacent to the other. An example is shown in Figure~\ref{fig-double-ended-fork}. It is clear that the set of double-ended forks is an antichain of graphs in the induced subgraph ordering.

\begin{figure}
\begin{footnotesize}
\begin{center}
	\begin{tikzpicture}[scale=0.4, baseline=(current bounding box.center)]
		\draw (1,-1.5)--(2,0)--(1,1.5);
		\draw (2,0)--(5.5,0);
		\draw (7.5,0)--(11,0);
		\draw (12,-1.5)--(11,0)--(12,1.5);
		\plotpartialperm{1/-1.5,1/1.5,2/0,3/0,4/0,5/0,8/0,9/0,10/0,11/0,12/-1.5,12/1.5};
		\node at (6.5,0) {$\dots$};
	\end{tikzpicture}
\end{center}
\end{footnotesize}
\caption{A double-ended fork.}
\label{fig-double-ended-fork}
\end{figure}
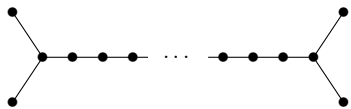


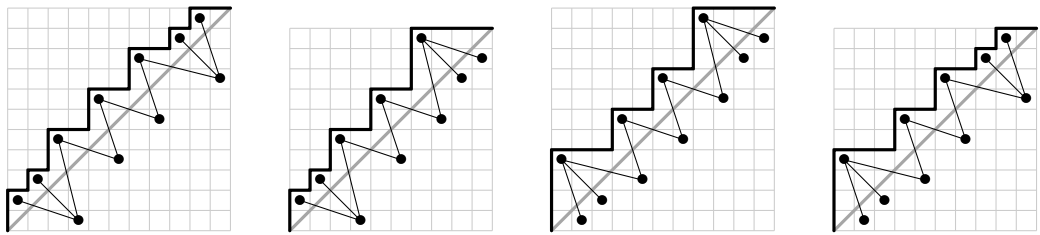
\begin{figure}
\begin{footnotesize}
\begin{center}
	\begin{tikzpicture}[scale=0.26925]
	    \plotpermdyckgrid{2,3,5,1,7,4,9,6,10,11,8};
	    \plotpermgraph{2,3,5,1,7,4,9,6,10,11,8};
		\plotpermdyckpath{1,1,-1,1,-1,1,1,-1,-1,1,1,-1,-1,1,1,-1,-1,1,-1,1,-1,-1};
    \end{tikzpicture}
\quad\quad
	\begin{tikzpicture}[scale=0.26925]
	    \plotpermdyckgrid{2,3,5,1,7,4,10,6,8,9};
	    \plotpermgraph{2,3,5,1,7,4,10,6,8,9};
		\plotpermdyckpath{1,1,-1,1,-1,1,1,-1,-1,1,1,-1,-1,1,1,1,-1,-1,-1,-1};
    \end{tikzpicture}
\quad\quad
	\begin{tikzpicture}[scale=0.26925]
	    \plotpermdyckgrid{4,1,2,6,3,8,5,11,7,9,10};
	    \plotpermgraph{4,1,2,6,3,8,5,11,7,9,10};
		\plotpermdyckpath{1,1,1,1,-1,-1,-1,1,1,-1,-1,1,1,-1,-1,1,1,1,-1,-1,-1,-1};
    \end{tikzpicture}
\quad\quad
	\begin{tikzpicture}[scale=0.26925]
	    \plotpermdyckgrid{4,1,2,6,3,8,5,9,10,7};
	    \plotpermgraph{4,1,2,6,3,8,5,9,10,7};
		\plotpermdyckpath{1,1,1,1,-1,-1,-1,1,1,-1,-1,1,1,-1,-1,1,-1,1,-1,-1};
    \end{tikzpicture}
\end{center}
\end{footnotesize}
\caption{The different types of members of $U$, shown with both their inversion graphs and associated Dyck paths.}
\label{fig-antichains}
\end{figure}

Let $U$ denote the set of all permutations $\pi$ for which $G_\pi$ is isomorphic to a double-ended fork. As in the case of increasing oscillations, direct construction shows that there are four slightly different types of members of $U$, depicted in Figure~\ref{fig-antichains}. By inspection $U\subseteq\Av(321)$, which also follows because double-ended forks are bipartite. By our previous remarks, it follows that $U$ forms an infinite antichain. In particular, every well-quasi-ordered subclass of $\Av(321)$ must have finite intersection with $U$. To establish the other direction, we begin with the following structural result.

\begin{proposition}
\label{prop-bound-paths}
If the subclass $\C\subseteq\Av(321)$ has finite intersection with $U$ then there is a number $\ell$ such that for all connected graphs $G_\pi$ with $\pi\in\C$, the distance between any two vertices of degree three or greater is at most $\ell$.
\end{proposition}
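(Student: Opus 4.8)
The plan is to argue in the form: since $\C\cap U$ is finite, let $N$ be the length of a longest member of $\C\cap U$ (with $N=0$ if this set is empty), and show that $\ell=N+O(1)$ works. So suppose $\pi\in\C$, $G_\pi$ is connected, and $u,v$ are entries of $\pi$ with $\deg u,\deg v\ge 3$ and $d(u,v)=D$; the goal is to exhibit an induced double-ended fork in $G_\pi$ with spine of length $D-O(1)$. Its underlying subpermutation then lies in $U$ by the remark that induced subgraphs of $G_\pi$ are themselves inversion graphs, and it lies in $\C$ because $\C$ is a class, so it lies in $\C\cap U$, forcing $D-O(1)\le N$.

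Fix a shortest path $P\colon u=c_0,c_1,\dots,c_D=v$ in $G_\pi$. Being shortest it is induced, so the subpermutation of $\pi$ on $\{c_0,\dots,c_D\}$ has inversion graph $P_{D+1}$, and (for $D\ge 3$) is one of the increasing oscillations. The key local fact I would establish is: \emph{every neighbour $x$ of $c_0$ other than $c_1$ is adjacent, among the vertices of $P$, only to $c_0$, or only to $c_0$ and $c_2$} (and symmetrically at the $c_D$ end). Here $x\not\sim c_1$ by triangle-freeness of $G_\pi$ (which holds since $\pi$ avoids $321$); and if $x$ were adjacent to some $c_j$ with $j\ge 3$, then, taking $j$ minimal and using triangle-freeness together with the fact that inversion graphs contain no induced cycle on five or more vertices, one is driven to the single remaining possibility that $x$ is simultaneously adjacent to $c_0,c_2,c_4$ with $x\not\sim c_1,c_3$ — and a direct check of the possible positions and values of those six entries shows this configuration cannot occur in the inversion graph of a $321$-avoiding permutation (equivalently, it is a forbidden induced subgraph of bipartite permutation graphs). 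The same forbidden-cycle reasoning shows that once $D$ exceeds an absolute constant $D_0$, no neighbour of $c_0$ other than $c_1$ is adjacent to any neighbour of $c_D$ other than $c_{D-1}$: such an adjacency would complete, after deleting the at most two possible chords $x\sim c_2$ and $y\sim c_{D-2}$, an induced cycle of length roughly $D$.

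With these facts the fork is assembled end-by-end. Since $\deg c_0\ge 3$ and $c_1$ is its only neighbour on $P$, $c_0$ has at least two off-path neighbours. If none of them is adjacent to $c_2$, use $c_0$ as a fork vertex, two of those off-path neighbours as its pendants, and $c_0,c_1,c_2,\dots$ as the spine. If instead some off-path neighbour $x$ of $c_0$ is adjacent to $c_2$, use $c_2$ as the fork vertex, with pendants $c_1$ and $x$ (which are non-adjacent by triangle-freeness and, by the local fact, meet the spine $c_2,c_3,\dots$ only at $c_2$), discarding $c_0,c_1$. Carry out the symmetric construction at the $v$ end. Provided $D>D_0$, the resulting vertex set induces a double-ended fork in $G_\pi$: induced-ness within a single end is immediate from the local fact, and the only pairs that could spoil it lie one near each end and are excluded by the non-interaction statement above. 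The spine has at least $D-3$ vertices, so the underlying subpermutation of $\pi$ lies in $\C\cap U$ with length at least $D-O(1)$; hence $D\le N+O(1)$, and $\ell:=\max(D_0,\,N+O(1))$ does the job.

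The main obstacle is the local fact — specifically, preventing an off-path neighbour of $c_0$ from reaching deep along $P$. The no-induced-$C_{\ge 5}$ condition by itself is insufficient (a vertex joined to alternate vertices of a path is triangle-free and $C_{\ge 5}$-free but is not realisable as an inversion graph of a $321$-avoider), so one must genuinely use the geometry of $321$-avoiding permutations: either the rigid staircase shape of an increasing oscillation, in which an entry added near the corner $c_0$ cannot be inverted with entries far inside the oscillation without creating a $321$, or the known excluded induced subgraphs of bipartite permutation graphs. Once that is in hand, treating the two forked ends independently and verifying non-interference is routine given the bound on $D$.
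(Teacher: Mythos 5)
Your overall strategy matches the paper's: take a shortest path $P$ between two vertices of degree at least three, show that each endpoint contributes two pendant vertices attaching only near that end of $P$, assemble an induced double-ended fork, and conclude from the finiteness of $\C\cap U$. However, there is a gap exactly where you flag ``the main obstacle'': you assert, without carrying it out, that ``a direct check of the possible positions and values of those six entries shows this configuration cannot occur,'' and the argument leading up to that check is itself only sketched. As written, the key local fact is unproved.

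The more important point is that the gap is a misdiagnosis, and your local fact needs nothing beyond the shortest-path property of $P$. Since $P$ is a shortest path, $d_{G_\pi}(c_0,c_j)=j$ for every $j$; so if an off-path neighbour $x$ of $c_0$ were adjacent to some $c_j$ with $j\ge 3$, then $d(c_0,c_j)\le 2<j$, a contradiction. Together with triangle-freeness (to rule out $j=1$), this already confines $x$'s adjacencies on $P$ to $\{c_0,c_2\}$, and the same one-line argument rules out any adjacency between an off-path neighbour of $c_0$ and one of $c_D$ as soon as $D\ge 4$. The no-induced-$C_{\ge 5}$ condition, the forbidden induced subgraphs of bipartite permutation graphs, and the geometry of increasing oscillations are all unnecessary here; this is precisely how the paper argues, which is why its proof is short. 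Two smaller issues in the assembly step: the phrase ``with pendants $c_1$ and $x$ \dots\ discarding $c_0,c_1$'' both keeps and discards $c_1$ (you presumably mean to discard only $c_0$), and you do not address the case in which both off-path neighbours of $c_0$ are adjacent to $c_2$---though it is handled the same way, taking those two as the pendants at $c_2$.
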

\begin{proof}
Suppose that $\C$ contains no members of $U$ of length $\ell+2$ or longer (here length refers to the length of the permutation) for some $\ell\ge 4$ and choose $\pi\in\C$ to be an arbitrary sum indecomposable permutation.

Let $x$ and $y$ be two entries of $\pi$ of degree three or greater and suppose to the contrary that the distance between these vertices is greater than $\ell$, so there is a shortest path $P$ in $G_\pi$ between $x$ and $y$ with at least $\ell$ internal vertices. Because $x$ and $y$ each have degree at least three, $x$ has neighbours $x_1\neq x_2$ which do not lie on $P$ and $y$ has neighbours $y_1\neq y_2$ which do not lie on $P$. Because the distance between $x$ and $y$ is at least $\ell\ge 4$, note that neither $x_1$ nor $x_2$ can be adjacent to $y$, $y_1$, or $y_2$ (and vice versa with $x$ and $y$ swapped). Also, because $G_\pi$ does not contain a triangle, $x_1$ is not adjacent to $x_2$ and $y_1$ is not adjacent to $y_2$. If none of $x_1$, $x_2$, $y_1$, or $y_2$ are adjacent to any vertices of $P$ other than $x$ or $y$ then $P\cup\{x_1,x_2,y_1,y_2\}$ is isomorphic to a double-ended fork on at least $\ell+6$ vertices (as shown on the left of Figure~\ref{fig-bound-paths}), a contradiction.

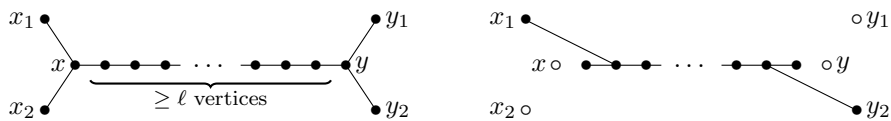
\begin{figure}
\begin{center}
	\begin{tikzpicture}[scale=0.4, baseline=(current bounding box.center)]
		\draw (1,-1.5)--(2,0)--(1,1.5);
		\draw (2,0)--(5.5,0);
		\draw (7.5,0)--(11,0);
		\draw (12,-1.5)--(11,0)--(12,1.5);
		\plotpartialperm{1/-1.5,1/1.5,2/0,3/0,4/0,5/0,8/0,9/0,10/0,11/0,12/-1.5,12/1.5};
		\node at (6.5,0) {$\dots$};
		\node [left] at (1,1.5) {$x_1$};
		\node [left] at (1,-1.5) {$x_2$};
		\node [left] at (2,0) {$x$};
		\node [right] at (12,1.5) {$y_1$};
		\node [right] at (12,-1.5) {$y_2$};
		\node [right] at (11,0) {$y$};
		\draw [thick,decoration={brace, mirror}, decorate] (2.5,-0.5) -- (10.5,-0.5)
			node [pos=0.5,anchor=north] {\text{\footnotesize $\ge\ell$ vertices}};
	\end{tikzpicture}
\quad\quad
	\begin{tikzpicture}[scale=0.4, baseline=(current bounding box.center)]
		\draw (1,1.5)--(4,0);
		\draw (3,0)--(5.5,0);
		\draw (7.5,0)--(10,0);
		\draw (9,0)--(12,-1.5);
		\plotpartialperm{1/-1.5,1/1.5,2/0,3/0,4/0,5/0,8/0,9/0,10/0,11/0,12/-1.5,12/1.5};
		\absdothollow{(1,-1.5)}{};
		\absdothollow{(2,0)}{};
		\absdothollow{(11,0)}{};
		\absdothollow{(12,1.5)}{};
		\node at (6.5,0) {$\dots$};
		\node [left] at (1,1.5) {$x_1$};
		\node [left] at (1,-1.5) {$x_2$};
		\node [left] at (2,0) {$x$};
		\node [right] at (12,1.5) {$y_1$};
		\node [right] at (12,-1.5) {$y_2$};
		\node [right] at (11,0) {$y$};
	\end{tikzpicture}
\end{center}
\caption{Two situations which arise in the proof of Proposition~\ref{prop-bound-paths}.}
\label{fig-bound-paths}
\end{figure}

On the other hand, if one or both of $x_1$ or $x_2$ were adjacent to another vertex of $P$ then it could not be the vertex of $P$ at distance one from $x$ as this would create a triangle (a copy of $321$ in $\pi$) and it also could not be a vertex of distance three or greater from $x$ as this would contradict our choice of $P$ (as a shortest path). Thus the only possibility would be the vertex of $P$ at distance two from $x$, as shown on the right of Figure~\ref{fig-bound-paths}. An analogous analysis implies that if one or both of $y_1$ or $y_2$ were adjacent to another vertex of $P$ then that vertex would have to be the vertex of distance two from $y$. In any case, as indicated on the right of Figure~\ref{fig-bound-paths}, we find an induced double-ended fork on at least $\ell+2$ vertices, a contradiction which completes the proof.
\end{proof}

We are now ready to prove that having finite intersection with $U$ is a sufficient condition for a subclass of $321$-avoiding permutations to be well-quasi-ordered. By Proposition~\ref{prop-wqo-sum-components}, it suffices to consider the sum indecomposable members of our subclass. We then use Proposition~\ref{prop-bound-paths} to show that these sum indecomposable permutations have severely constrained structure; in particular, we show that it implies that ``most'' of their entries are confined to a bounded number of cells. This characterisation is then shown to be sufficient for another appeal to Higman's Lemma, from which well-quasi-ordering follows.

\begin{theorem}
\label{thm-wqo-fin-int}
A subclass $\C \subseteq \Av(321)$ is well-quasi-ordered if and only if $\C \cap U$ is finite.
\end{theorem}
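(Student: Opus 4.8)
The plan is to dispatch the forward implication almost immediately and to put all the effort into the converse. Since the discussion preceding the statement shows that $U\subseteq\Av(321)$ is an infinite antichain, a class $\C$ with $\C\cap U$ infinite contains an infinite antichain and so is not well-quasi-ordered; this gives ``well-quasi-ordered $\Rightarrow$ $\C\cap U$ finite''. For the converse, assume $\C\cap U$ is finite. By Proposition~\ref{prop-wqo-sum-components} it suffices to prove that the sum indecomposable permutations of $\C$ are well-quasi-ordered, so I would let $\pi$ range over these; for such $\pi$ the inversion graph $G_\pi$ is connected, and since $\pi$ avoids $321$ it is bipartite.

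Next I would invoke Proposition~\ref{prop-bound-paths}, which supplies a constant $\ell$ such that in each such $G_\pi$ any two vertices of degree at least three lie within distance $\ell$. If $G_\pi$ has no vertex of degree at least three then, being connected, bipartite and of maximum degree at most two, it is a path or an even cycle; an even cycle of length at least six is not an inversion graph, so $\pi$ is $2143$, an increasing oscillation, or a permutation of length at most three. If $G_\pi$ does have a vertex of degree at least three, then all such vertices, together with the shortest paths joining them, are confined to a subgraph of bounded diameter. I would then use the rigidity of bipartite inversion graphs (not merely the distance bound) to show that $G_\pi$ consists of a bounded ``core'' subgraph, containing all vertices of degree at least three, with only a bounded number of induced paths attached to it, each attached path being one end of an increasing oscillation. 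Translated back through a staircase gridding, this is the promised structural description: all but a bounded number of entries of $\pi$ lie in a bounded number of cells, and the remaining entries form boundedly many increasing-oscillation ``tails''.

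To conclude I would apply Higman's Lemma. Each sum indecomposable $\pi\in\C$ is encoded by a word of bounded length over a well-quasi-ordered alphabet: finitely many symbols recording the shape of the core, together with, for each of the boundedly many tail slots, the tail drawn from the poset of increasing oscillations (and increasing sequences) under containment. This poset is well-quasi-ordered, since a longer increasing oscillation contains every shorter oscillation of the same flavour and hence there is no infinite antichain, while the core data range over a finite, hence well-quasi-ordered, set. Because the core and the tails occur inside $\pi$ as genuine intervals, containment of two such encodings in the generalised subword order forces containment of the corresponding permutations; Higman's Lemma then gives that the set of encodings, and therefore the set of sum indecomposable members of $\C$, is well-quasi-ordered, which finishes the proof.

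The main obstacle is the structural step. Proposition~\ref{prop-bound-paths} only bounds distances between vertices of degree at least three; on its own it gives no bound on how many such vertices there are, nor on how many tails may hang off the core, nor does it guarantee that the tails appear as honest intervals of $\pi$. Pinning all of this down — so that the concluding Higman argument is legitimate — requires exploiting the bipartiteness of $G_\pi$ together with the combinatorics of staircase griddings, and this is the technical heart of the proof.
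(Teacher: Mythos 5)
Your high-level architecture mirrors the paper's: same forward direction via the antichain $U$, same reduction to sum indecomposable members via Proposition~\ref{prop-wqo-sum-components}, same use of Proposition~\ref{prop-bound-paths} to confine the high-degree vertices, same decomposition into a core plus attached paths, and a concluding appeal to Higman. But the concluding step contains a genuine error that breaks the argument.

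You assert that ``the core data range over a finite, hence well-quasi-ordered, set'' and that each sum indecomposable member of $\C$ is ``encoded by a word of bounded length over a well-quasi-ordered alphabet.'' Neither is true. Proposition~\ref{prop-bound-paths} bounds the \emph{labels} (cell indices in a staircase gridding) of the degree-$\ge 3$ vertices, and hence confines the core to a bounded number of cells; but each cell can hold arbitrarily many entries, so the core itself is unbounded. (Concretely, $\pi = (n{+}1)\,1\,(n{+}2)\,2 \cdots (2n)\,n$ avoids $321$, has $G_\pi$ equal to a bipartite half-graph with only very short induced paths, and hence lies in $\W_q$ for small $q$; its vertices of high degree fill two cells with $n$ entries each.) With an unbounded core, your encoding is \emph{not} a bounded-length word, and the set of possible cores is \emph{not} finite; and indeed if your words really did have bounded length you would not need Higman's Lemma at all, which is a sign the two claims are in tension. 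The paper's remedy is precisely the observation you missed: the body occupies at most $\ell+3$ cells, so its \emph{omnibus encoding with respect to the inherited staircase gridding} is a word of unbounded length over the \emph{finite} alphabet $\{1,\dots,\ell+4\}$ (plus two marks recording where the tails attach). Higman's Lemma is then applied to these arbitrary-length words over a finite alphabet, and Observation~\ref{obs-order-pres} converts subword containment of encodings into permutation containment.

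Two smaller points. First, you speak of ``boundedly many tail slots'' but do not justify any bound; the paper actually proves there are at most two tails (one southwest, one northeast), by an argument using sum indecomposability to rule out two disjoint nontrivial paths on the same side of the core. Second, rather than encoding each tail as an element of a poset of oscillations under containment, the paper simply records the two tail lengths as elements of $\mathbb{N}$; once the attachment points in the body are marked, the tail length determines the tail, so this is cleaner and makes the final ``comparable encodings imply comparable permutations'' step immediate. Your idea would also work but needs more care to justify. Your closing paragraph correctly diagnoses where the technical work lies; the specific fix you are missing is to lean harder on the staircase gridding to turn ``bounded cell count'' into ``finite alphabet,'' so that Higman applies to the body word itself rather than to an imagined finite set of core shapes.
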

\begin{proof}
By our previous remarks, it suffices to show that if $\C\cap U$ is finite for a subclass $\C\subseteq\Av(321)$ then the sum indecomposable permutations in $\C$ are well-quasi-ordered. To this end, suppose that $\C\cap U$ is finite, choose a sum indecomposable permutation $\pi\in\C$, and fix a particular (not necessarily greedy) staircase gridding $\pi^\gridded$ of $\pi$. Thus every entry of $\pi$ lies in some cell; we refer to the number of this cell as the \emph{label} of the entry or corresponding vertex in $G_\pi$.

Because inversions in $\pi$ can occur only between adjacent cells in the gridding, we conclude that the labels of adjacent vertices in $G_\pi$ differ by precisely $1$. In particular, the distance between two entries of $\pi$ in $G_\pi$ is bounded below by the difference of their labels. Thus by Proposition~\ref{prop-bound-paths}, all vertices in $G_\pi$ of degree three or greater have labels in some bounded interval $\{i,i+1,\dots,i+\ell\}$, where $i$ is the least label of such a vertex (if no such vertices exist, choose $i=0$) and $\ell$ depends only on $\C$. We refer to all entries of $\pi$ in these cells as the \emph{core} of $\pi$.

We aim to partition the entries of $\pi^\gridded$ into three groups: a body, comprising the core of $\pi^\gridded$ together with some of the entries from the adjacent cells at either end, a lower-left tail, and an upper-right tail. The two tails will comprise the entries of $\pi^\gridded$ to the southwest (respectively, northeast) of the core, and the graph induced by each tail will be shown to be a path.

To define this partition, first consider the entries outside the core in $G_\pi$. This set is naturally divided into two pieces: $T_{\text{SW}}$, consisting of entries belonging to cells of label less than $i$, and $T_{\text{NE}}$, consisting of entries belonging to cells of label greater than $i+\ell$. Since all vertices in these pieces have degree at most two, each consists of a disjoint union of paths, and we claim in fact that each piece can contain at most one such path. Suppose to the contrary that the vertices within $T_{\text{SW}}$ were to form at least two disjoint paths. Then (by the nature of edges in inversion graphs), the points in one of these paths would all lie strictly below and to the left of the points of the other path, and thus those points would also lie strictly below and to the left of the points which constitute the core, contradicting our assumption that $\pi$ is sum indecomposable.

Consequently, every vertex of $G_\pi$ that does not correspond to an entry in the core either lies in one of two paths or is only adjacent to (at most two) vertices in the core. This latter collection of vertices must all lie in one of the two cells immediately adjacent to the cells that form the core, and we form the \emph{body} of $\pi$ by adding all these entries to the core (at which point the body is contained in at most $\ell+3$ cells). The entries of $T_{\text{SW}}$ which still lie outside the body now form a path in $G_\pi$. This path, if nonempty, must contain at least two vertices as otherwise it would already be included in the body. If the path is nonempty, we add the vertex of this path which is adjacent to the core to the body and call the remaining vertices the \emph{lower-left tail}. We then perform the analogous operation on the entries of $T_{\text{NE}}$ to form the \emph{upper-right tail}. Note that the body is contained in at most $\ell+3$ cells at the end of this process.

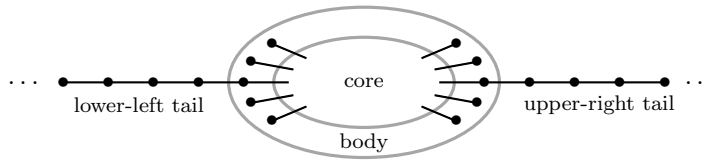
\begin{figure}
\begin{center}
	\begin{tikzpicture}[scale=0.2, baseline=(current bounding box.center)]
		\draw[darkgray, very thick, line cap=round] (0,0) ellipse (9 and 5);
		\draw[darkgray, very thick, line cap=round] (0,0) ellipse (6 and 3);
		\foreach \x in {-8,-11,...,-20}
			\absdot{(\x,0)}{};
		\foreach \x in {8,11,...,20}
			\absdot{(\x,0)}{};
		\foreach \angle in {-20,-40,20,40,140,160,200,220}
			\draw[fill=black, thick] (\angle:8 and 4) node {\small $\bullet$} 
				-- (\angle:5 and 2.5);
		\draw [thick] (5,0)--(20,0);
		\draw [thick] (-5,0)--(-20,0);	
		\node at (0,-4) {\footnotesize body};
		\node at (0,0) {\footnotesize core};
		\node [left] at (-10,-1.5) {\footnotesize lower-left tail};
		\node [right] at (10,-1.5) {\footnotesize upper-right tail};
		\node at (22.5,0) {$\dots$};
		\node at (-22.5,0) {$\dots$};
	\end{tikzpicture}
\end{center}
\caption{The core, body, and tails of a $321$-avoiding inversion graph.}
\label{fig-graph-body-and-tail}
\end{figure}

Our sum indecomposable permutation $\pi$ now has a graph of the form shown in Figure~\ref{fig-graph-body-and-tail} where each of the two tails is either absent or else contains at least one vertex outside the body which is adjacent to a vertex of degree two inside the body. Note also that it is possible in our gridding of $\pi$ that some entries of the two tails can share cells with entries of the body, but this is of no consequence: they are included in the tail, and not in the body.

The subpermutation of $\pi$ that makes up the body of $\pi$, together with the first point of each tail (i.e., the one adjacent to the body, if there is a tail) inherits a staircase gridding (which need not be greedy) from $\pi^\gridded$ in which it occupies not more than $\ell+5$ cells. This means that the body has a gridding into cells $1,2,\dots,\ell+5$ or $2,3,\dots,\ell+6$ depending on the parity of the first cell in the inherited gridding. Denote the omnibus encoding of this gridding of the body by $w_\pi$; this is a word over the alphabet $\{1,2,\dots,\ell+6\}$.

We now form a marked version of $w_\pi$. The lower tail of $\pi$ has length $t_{\text{SW}}^{\pi^\gridded} \geq 0$, while the upper tail has length $t_{\text{NE}}^{\pi^\gridded} \geq 0$. If $t_{\text{SW}}^{\pi^\gridded}$ (resp.~$t_{\text{SE}}^{\pi^\gridded}$) is non-zero, then there is a unique entry in the body which is adjacent to an entry of the lower (resp.~upper) tail. We mark the letter of $w_\pi$ which corresponds to this entry with an underline (resp.~overline), and denote the resulting marked version of $w_\pi$ by $\overline{w}_\pi$. The relative positions between all entries of the body and the two tails are now determined by $\overline{w}_\pi$, though the lengths of the tails are not captured in this word.

Let $\overline{\Sigma}$ be the extended alphabet consisting of the symbols $\{1,2,\dots,\ell+6\}$ together with over- and underlined versions of each. The discussion above defines an injective mapping from sum indecomposable permutations in $\C$ to $\overline{\Sigma}^\ast \times \mathbb{N} \times \mathbb{N}$ given by
\[
	\pi^\gridded \mapsto (\overline{w}_\pi, t_{\text{SW}}^{\pi}, t_{\text{NE}}^{\pi}).
\]
Define an ordering on $\overline{\Sigma}^\ast \times \mathbb{N} \times \mathbb{N}$ by taking product of the subword ordering on $\overline{\Sigma}^\ast$ and the usual orderings on the two copies of $\mathbb{N}$. Because $\overline{\Sigma}^\ast$ is well-quasi-ordered by Higman's Lemma and the product of well-quasi-orders is again well-quasi-ordered, $\overline{\Sigma}^\ast \times \mathbb{N} \times \mathbb{N}$ is well-quasi-ordered. Moreover, if $(\overline{w}_\sigma, t_{\text{SW}}^{\sigma}, t_{\text{NE}}^{\sigma}) \le (\overline{w}_\pi, t_{\text{SW}}^{\pi}, t_{\text{NE}}^{\pi})$ in this ordering then $\sigma\le\pi$ as the comparability on the first coordinate implies that the body of $\sigma$ embeds into the body of $\pi$ in a way preserving the relative positions of the entries adjacent to the two tails (a consequence of Observation~\ref{obs-order-pres}). The inequality of tail lengths then allows for the entire embedding of $\sigma$ into $\pi$ to be completed. Hence, with respect to subpermutation ordering, the sum indecomposable members of $\C$ are well-quasi-ordered, and so $\C$ is as well by Proposition~\ref{prop-wqo-sum-components}.
\end{proof}

We now turn to the second half of the argument---that all well-quasi-ordered subclasses of $\Av(321)$ are encoded by regular languages. Guided by Theorem~\ref{thm-wqo-fin-int}, we would like to check the involvement of sufficiently long members of $U$ in a subclass $\C$ by considering the encodings $(\eta_c\circ\omega)(\pi^\gridded)$ of greedy griddings of members of $\C$ and an appropriate value of $c$. To achieve this, we resort once more to the Dyck path encodings. First, as indicated in Figure~\ref{fig-antichains}, it is easy to see that the Dyck path encodings of members of $U$ form a regular language---outside of bounded prefixes and suffixes these words consist of repetitions of $\textsf{u}^2 \textsf{d}^2$.

In fact we are interested in the encodings of sets $U_{\ge q}$ for $q\in\mathbb{P}$, consisting of permutations in $U$ of length at least $q$. Noting that $U\setminus U_{\ge q}$ is finite for every value of $q$ we obtain the following.

\begin{proposition}
\label{prop-U-Dyck-regular}
For any positive integer $q$, the language of Dyck paths corresponding to the members of $U_{\geq q}$ is regular.
\end{proposition}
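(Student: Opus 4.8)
The plan is to realise the language in question as an intersection of two regular languages.

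First I would make precise the remark preceding the proposition: the language $L_U$ of Dyck paths corresponding to \emph{all} members of $U$ is regular. By the classification of $U$ into four types (Figure~\ref{fig-antichains}), the Dyck path of a member of $U$ has the shape $\alpha\,(\textsf{u}^2\textsf{d}^2)^j\,\beta$, where the pair $(\alpha,\beta)$ ranges over a finite list determined by the type (together with a bounded number of exceptional short members), and $j$ ranges over all sufficiently large integers. Hence $L_U$ is a finite union of languages of the form $\alpha\,(\textsf{u}^2\textsf{d}^2)^*\,\beta$ (adjusting thresholds and adjoining finitely many exceptional words as needed), each plainly regular, so $L_U$ is regular. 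I would also record the standard fact, implicit in the Introduction, that the Dyck path encoding is a bijection between $\Av(321)$ and the set of all Dyck paths which sends a permutation of length $n$ to a path with $2n$ steps; in particular it is injective on $U$.

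Given this, the argument is short. The set $D_{\ge 2q}$ of all words over $\{\textsf{u},\textsf{d}\}$ of length at least $2q$ is cofinite, hence regular. Since a permutation in $U$ has length at least $q$ precisely when its Dyck path has at least $2q$ steps, the language of Dyck paths corresponding to $U_{\ge q}$ is exactly $L_U\cap D_{\ge 2q}$, which is regular by closure under intersection. Equivalently, one may instead subtract from $L_U$ the finite language of Dyck paths of the finitely many members of $U\setminus U_{\ge q}$, using closure under set difference.

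I do not expect any genuine obstacle here: the only step requiring a little care is writing out the eventually-periodic form of the four families of Dyck paths explicitly enough to certify the regularity of $L_U$, and this is precisely the informal observation already made just before the proposition; everything else is routine closure of regular languages.
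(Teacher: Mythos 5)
Your proof is correct and essentially matches the paper's argument: the paper's (deliberately terse) justification is exactly the observation that the Dyck paths of members of $U$ are eventually periodic with period $\textsf{u}^2\textsf{d}^2$ surrounded by finitely many possible bounded prefixes and suffixes, and that $U \setminus U_{\ge q}$ is finite, which is your finite-union-plus-cofinite-intersection argument dressed slightly differently.
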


\begin{figure}
\begin{footnotesize}
\begin{center}
    \begin{tikzpicture}[scale=0.3, baseline=(current bounding box.center)]
		\plotpermbox{2}{2}{8}{8};
		\plotpartialperm{2/2,4/4,6/6,8/8};
		\plotpartialperm{5/1,7/3,9/5,11/7};
		\node [rotate=45] at (9.25,9.25) {{\scriptsize $\dots$}};
		\node [rotate=45] at (12.25,8.25) {{\scriptsize $\dots$}};
		\node [rotate=45] at (0.75,0.75) {{\scriptsize $\dots$}};
		\node [rotate=45] at (3.75,-0.25) {{\scriptsize $\dots$}};
    \end{tikzpicture} 
\quad\quad
    \begin{tikzpicture}[scale=0.3, baseline=(current bounding box.center)]
		\plotpermbox{2}{2}{8}{8};
		\plotpartialperm{2/2,4/4,6/6,8/8};
		\plotpartialperm{-1/3,1/5,3/7,5/9};
		\node [rotate=45] at (9.25,9.25) {{\scriptsize $\dots$}};
		\node [rotate=45] at (6.25,10.25) {{\scriptsize $\dots$}};
		\node [rotate=45] at (0.75,0.75) {{\scriptsize $\dots$}};
		\node [rotate=45] at (-2.25,1.75) {{\scriptsize $\dots$}};
    \end{tikzpicture} 
\end{center}
\end{footnotesize}
\caption{In any staircase gridding of an increasing oscillation, there can be at most three entries in a cell.}
\label{fig-3-entries-per-cell}
\end{figure}
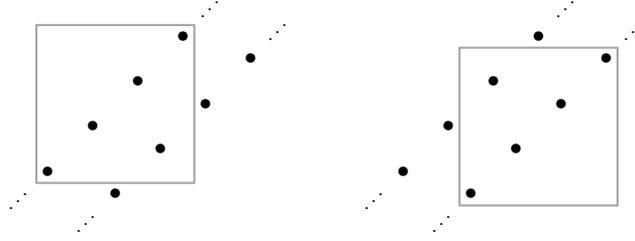

As demonstrated in Figure~\ref{fig-3-entries-per-cell} it is impossible for a cell of a staircase gridding of an increasing oscillation to contain four or more entries. As every member of $U$ is formed by adding two entries to an increasing oscillation, it follows that in every staircase gridding of a member of $U$ each cell may contain at most five entries. In particular, if an element $\mu\in U$ occurs as a subpermutation of $\pi \in \Av(321)$ with greedy gridding $\pi^\gridded$, and if we mark the letters of $\eta_c(\omega(\pi^\gridded))$ corresponding to any one copy of $\mu$ in $\pi$, no more than $5(c-1)$ occurrences of each letter will be marked by Observation \ref{obs-where-encoded}.

\begin{proposition}
\label{prop-Uq-regular}
Let $q$ be a positive integer and set $\W_q = \Av(321) \cap \Av(U_{\geq q})$. Further take $c$ to be any positive integer such that the omnibus encodings of all greedy staircase griddings of members of $\W_q$ are contained in $\G_c^\infty$ (such a value of $c$ is guaranteed to exist by Proposition~\ref{prop-avoid-shifts}). Then $\G^{\eta}_{c,\W_q}$, the language of panel encodings of greedy griddings of members of $\W_q$, is regular.
\end{proposition}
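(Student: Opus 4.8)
The plan is to follow the proof of Proposition~\ref{prop-single-base-regular}, replacing the single forbidden pattern $\beta$ by the infinite antichain $U_{\ge q}$ and exploiting the fact, recorded just before the statement, that every staircase gridding of a member of $U$ has at most five entries per cell. Define $\G_{c,\ge U_{\ge q}}^{\eta}$ to be the set of all encodings $\eta_c(\omega(\pi^\gridded))$ such that $\pi^\gridded$ is the greedy gridding of some $\pi\in\Av(321)$, the word $\omega(\pi^\gridded)$ avoids all shifts of $(12\cdots c)^c$, and $\pi$ contains a member of $U_{\ge q}$. Since each element of $\G_c^{\eta}$ is the panel encoding of the omnibus encoding of the greedy gridding of a unique $321$-avoiding permutation $\pi$, and such a $\pi$ lies in $\W_q$ exactly when it avoids every member of $U_{\ge q}$, we have $\G_{c,\W_q}^{\eta}=\G_c^{\eta}\setminus\G_{c,\ge U_{\ge q}}^{\eta}$. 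As $\G_c^{\eta}$ is regular by Proposition~\ref{prop-greediness-permutations} and regular languages are closed under set difference, it therefore suffices to show that $\G_{c,\ge U_{\ge q}}^{\eta}$ is regular.

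To do so, form a transducer $T$ as the composition of three transducers. First, a non-deterministic transducer that takes an input $w\in\L_c^{\eta}$ and marks an arbitrary set of its non-punctuation letters, subject to the restriction that every cell of the underlying gridding receives at most five marked entries; by Observation~\ref{obs-where-encoded} a marked letter of value $a$ in panel $p_j$ corresponds to an entry in cell $j+a$, and each cell receives entries only from $c-1$ consecutive panels, so this restriction can be enforced by maintaining a bounded window of small counters, one for each cell currently under construction. Second, the transducer of Proposition~\ref{prop-panel-to-domino}, which is applicable since only boundedly many letters of each value are marked (at most $5(c-1)$, as noted before the statement), outputs the domino encoding of the marked subpermutation. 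Third, the transducer of Proposition~\ref{prop-domino-to-Dyck} with $k=5$ outputs the corresponding Dyck path. Thus for any $\pi^\gridded$ encoded by some $w\in\G_c^{\eta}$, the outputs of $T$ on $w$ are precisely the Dyck paths of those subpermutations $\sigma\le\pi$ that occur inside $\pi^\gridded$ with at most five entries per cell.

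Now let $X_q$ denote the language of $\{\textsf{u},\textsf{d}\}$-words encoding the Dyck paths of members of $U_{\ge q}$, which is regular by Proposition~\ref{prop-U-Dyck-regular}; I claim that $\G_{c,\ge U_{\ge q}}^{\eta}=\G_c^{\eta}\cap T^{-1}(X_q)$. If $\pi$ contains $\mu\in U_{\ge q}$, then an occurrence of $\mu$ inherits from $\pi^\gridded$ a staircase gridding with at most five entries per cell, so the first transducer can mark exactly this occurrence and $T$ outputs the Dyck path of $\mu$, a word in $X_q$. Conversely, if $T$ outputs some word of $X_q$ on input $\eta_c(\omega(\pi^\gridded))$, then that word is simultaneously the Dyck path of a marked subpermutation $\sigma\le\pi$ and the Dyck path of some $\mu\in U_{\ge q}$; as $\sigma,\mu\in\Av(321)$ and a Dyck path determines its $321$-avoiding permutation uniquely (as recalled in the Introduction), we get $\sigma=\mu$, so $\pi$ contains a member of $U_{\ge q}$. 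Since $T^{-1}(X_q)$ is regular (transducers pull regular languages back to regular languages) and the regular languages are closed under intersection and set difference, $\G_{c,\ge U_{\ge q}}^{\eta}$, and hence $\G_{c,\W_q}^{\eta}$, is regular. The one point requiring care, and the only real departure from Proposition~\ref{prop-single-base-regular}, is that $U_{\ge q}$ is infinite, so the marking step cannot mark a bounded number of letters; the remedy of marking an arbitrary subset while capping the number of marked entries per cell at five is exactly what keeps $T$ within the hypotheses of Propositions~\ref{prop-panel-to-domino} and~\ref{prop-domino-to-Dyck}, and it is legitimate because every occurrence of a member of $U$ respects this cap, as illustrated in Figure~\ref{fig-3-entries-per-cell}.
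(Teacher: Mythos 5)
Your proof follows the same approach as the paper: compose a non-deterministic marking transducer (with the per-cell cap of five marked entries, justified by Figure~\ref{fig-3-entries-per-cell}) with the transducers of Propositions~\ref{prop-panel-to-domino} and~\ref{prop-domino-to-Dyck}, pull back the regular language of Dyck paths of $U_{\ge q}$ from Proposition~\ref{prop-U-Dyck-regular}, and take the complement in $\G_c^\eta$. Your write-up is simply more explicit than the paper's about the marking transducer and the two directions of the key set equality, which the paper leaves implicit.
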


\begin{proof}
Given a panel encoding from $\G^{\eta}_c$, we seek to determine whether the corresponding permutation contains a member of $U$ of length $q$ or longer. Set $k=5(c-1)$. There is a non-deterministic transducer that takes words in $\G_{c}^{\eta}$ as input and outputs marked versions containing at most $k$ marked copies of each letter. Denote by $T$ the composition of this transducer, the transducer defined in Proposition~\ref{prop-panel-to-domino} (which outputs the domino encoding of the word formed by these marked letters), and the transducer defined in Proposition~\ref{prop-domino-to-Dyck} (which outputs the Dyck path corresponding to these marked letters). Thus given the encoding $\eta_c(\omega(\pi^\gridded))\in \G_{c}^{\eta}$ of a greedily gridded permutation $\pi^\gridded$, $T$ outputs the set of all Dyck paths corresponding to certain subpermutations of $\pi$. Crucially, our observations above show that for any member $\mu$ of the infinite antichain $U$, we have $\mu\le\pi$ if and only if $\mu\in T(\eta_c(\omega(\pi^\gridded)))$.

To conclude, we know from Proposition~\ref{prop-U-Dyck-regular} that the language, say $\mathcal{D}_{\geq q}$, of Dyck paths corresponding to the members of $U_{\geq q}$ is regular. Therefore we see that
\[
	\mathcal{G}_{c,\mathcal{W}_q}^\eta
	=
	\mathcal{G}_c^\eta\setminus T^{-1}(\D_{\geq q}),
\]
and thus $\mathcal{G}_{c,\mathcal{W}_q}^\eta$ is regular, as desired.
\end{proof}

We can now prove the second half of our main result.

\newenvironment{proof-of-321-rational-wqo}{\medskip\noindent {\it Proof of Theorem~\ref{thm-321-rational} (for well-quasi-ordered subclasses).\/}}{\qed\bigskip}
\begin{proof-of-321-rational-wqo}
Using Theorem \ref{thm-wqo-fin-int}, choose a positive integer $q$ such that $\C$ contains no element of $U_{\ge q}$, i.e., $\C \subseteq \W_q$, and choose $c$ according to Proposition~\ref{prop-avoid-shifts} so that the omnibus encodings all members of $\W_q$ are contained in $\G_c^{\infty}$. The minimal members of $\W_q \setminus \C$ form an antichain, say $B\subseteq\W_q$, which is finite because $\W_q$ is well-quasi-ordered. Thus we have
\[
	\G_{c,\C}^{\eta} = \G_{c, \W_q}^{\eta} \setminus \bigcup_{\beta \in B} \G_{c, \geq \beta}^{\eta}
\]
and, as all parts of the right hand side are known to be regular (by Propositions~\ref{prop-single-base-regular} and \ref{prop-Uq-regular}) and $B$ is finite, we may conclude that $\G_{c,\C}^{\eta}$ is regular. It follows that the generating function for $\G_{c,\C}^{\eta}$, which is equal to that of $\C$, is rational.
\end{proof-of-321-rational-wqo}

\section{Conclusion}

While we opened the paper by emphasising the differences between the two Catalan permutation classes defined by avoiding $312$ and $321$, respectively, our main result shows that they do share a remarkable property. Every finitely based or well-quasi-ordered proper subclass of either of these classes has a rational generating function. Of course, stating the result in this way obscures a serious difference: \emph{all} subclasses of the $312$-avoiding permutations are \emph{both} finitely based and well-quasi-ordered.

One interested in actually computing these generating functions will notice an even more striking difference. While computing the enumeration of subclasses of $312$-avoiding permutations is essentially trivial (as outlined in~\cite{albert:simple-permutat:}), for subclasses of $321$-avoiding permutations the enumeration method we have presented appears to be impractical.

Another context in which the differences between these classes are readily apparent is that of Wilf-equivalence. Two permutation classes $\C$ and $\D$ are said to be \emph{Wilf-equivalent} if they are equinumerous, i.e., $|\C_n|=|\D_n|$ for all $n$. For classes defined by avoiding $312$ and a single additional restriction, Albert and Bouvel~\cite{Albert:A-general-theor:} have provided a conjecturally complete classification of the Wilf-equivalences. However, while there are some enumerative coincidences among classes defined by avoiding $321$ and a single additional restriction, empirically there does not appear to be anywhere near the same amount of collapse (into a small number of Wilf-equivalence classes). A related result was proved by Albert, Atkinson, Brignall, Ru\v{s}kuc, Smith, and West~\cite{albert:growth-rates-fo:}, who gave some sufficient conditions for the classes of $\{321,\alpha\}$- and $\{321,\beta\}$-avoiding permutations to have the same exponential growth rate.

We believe that the techniques introduced in this work---especially the panel encoding of Section~\ref{sec-panel-encoding}---will find many more applications. To introduce these we first observe that in the language of geometric grid classes~\cite{albert:inflations-of-g:,albert:geometric-grid-:,bevan:growth-rates-of:geom}, the $321$-avoiding permutations form the grid class of the infinite matrix
\[
	\fnmatrix{lllll}{%
	&&&\reflectbox{$\ddots$}&\reflectbox{$\ddots$}\\
	&&1&1\\
	&1&1\\
	1&1}.
\]
This is equivalent to the observation, made at the end of Section~\ref{sec-staircase}, that the $321$-avoiding permutations are precisely those that can be drawn on two parallel rays (see the first picture in Figure~\ref{fig-infinite-geom}). While a great deal is known about geometric grid classes, the present work can be viewed as an initial attempt to extend that theory to infinite matrices (another initial attempt in this direction is \cite{albert:enumerating-ind:}). One aspect of the infinite geometric grid class view of $321$-avoiding permutations that seems particularly important is that the cells can be labelled so that cell $i$ interacts only with cells $i-1$ and $i+1$, in the sense that the relative positions and values of any two entries in cells whose indices differ by more than one depend only on the indices of the cells, giving the class a ``path-like'' structure.

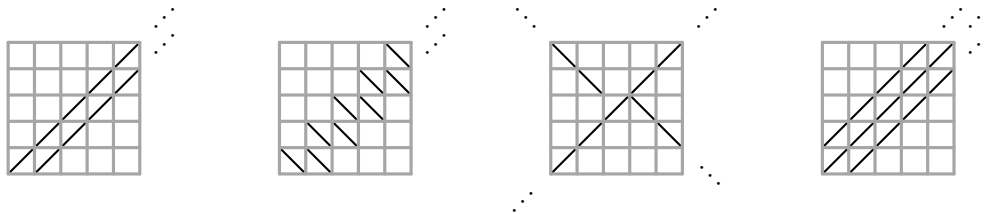
\begin{figure}
\begin{center}
	\begin{tikzpicture}[scale=0.35, baseline=(current bounding box.center)]
		\draw [thick, line cap=round] (3,3)--(8,8);
		\draw [thick, line cap=round] (4,3)--(8,7);
		\node[rotate=45] at (3,2) {\phantom{$\dots$}};
		\node[rotate=45] at (2,2) {\phantom{$\dots$}};
		\node[rotate=45] at (9,9) {$\dots$};
		\node[rotate=45] at (9,8) {$\dots$};
		\foreach \i in {3,4,5,6,7,8}{
			\draw [darkgray, very thick, rounded corners=0.01, line cap=round] (3,\i)--(8,\i);
			\draw [darkgray, very thick, rounded corners=0.01, line cap=round] (\i,3)--(\i,8);
		}
	\end{tikzpicture}
\quad
	\begin{tikzpicture}[scale=0.35, baseline=(current bounding box.center)]
		\foreach \i in {3,4,5,6}{
			\draw [thick, line cap=round] (\i,{\i+1})--({\i+1},\i);
			\draw [thick, line cap=round] ({\i+1},{\i+1})--({\i+2},\i);
		}
		\draw [thick, line cap=round] (7,8)--(8,7); 
		\node[rotate=45] at (3,2) {\phantom{$\dots$}};
		\node[rotate=45] at (2,2) {\phantom{$\dots$}};
		\node[rotate=45] at (9,9) {$\dots$};
		\node[rotate=45] at (9,8) {$\dots$};
		\foreach \i in {3,4,5,6,7,8}{
			\draw [darkgray, very thick, rounded corners=0.01, line cap=round] (3,\i)--(8,\i);
			\draw [darkgray, very thick, rounded corners=0.01, line cap=round] (\i,3)--(\i,8);
		}
	\end{tikzpicture}
\quad
	\begin{tikzpicture}[scale=0.35, baseline=(current bounding box.center)]
		\draw [thick, line cap=round] (3,3)--(8,8);
		\draw [thick, line cap=round] (3,8)--(5,6);
		\draw [thick, line cap=round] (6,6)--(8,4);
		\node[rotate=45] at (2,2) {$\dots$};
		\node[rotate=135] at (2,9) {$\dots$};
		\node[rotate=135] at (9,3) {$\dots$};
		\node[rotate=45] at (9,9) {$\dots$};
		\foreach \i in {3,4,5,6,7,8}{
			\draw [darkgray, very thick, rounded corners=0.01, line cap=round] (3,\i)--(8,\i);
			\draw [darkgray, very thick, rounded corners=0.01, line cap=round] (\i,3)--(\i,8);
		}
	\end{tikzpicture}
\quad
	\begin{tikzpicture}[scale=0.35, baseline=(current bounding box.center)]
		\draw [thick, line cap=round] (3,4)--(7,8);
		\draw [thick, line cap=round] (3,3)--(8,8);
		\draw [thick, line cap=round] (4,3)--(8,7);
		\node[rotate=45] at (3,2) {\phantom{$\dots$}};
		\node[rotate=45] at (2,2) {\phantom{$\dots$}};
		\node[rotate=45] at (8,9) {$\dots$};
		\node[rotate=45] at (9,9) {$\dots$};
		\node[rotate=45] at (9,8) {$\dots$};
		\foreach \i in {3,4,5,6,7,8}{
			\draw [darkgray, very thick, rounded corners=0.01, line cap=round] (3,\i)--(8,\i);
			\draw [darkgray, very thick, rounded corners=0.01, line cap=round] (\i,3)--(\i,8);
		}
	\end{tikzpicture}
\end{center}
\caption{The $321$-avoiding staircase, the negative staircase, an infinite spiral, and a thickened staircase.}
\label{fig-infinite-geom}
\end{figure}

It would therefore be natural to attempt to extend the results established here to other infinite geometric grid classes possessing a similar structure. Two more examples are given by the second and third pictures shown in Figure~\ref{fig-infinite-geom}.

The class corresponding to the second picture of Figure~\ref{fig-infinite-geom}, which we call the \emph{negative staircase}, demonstrates one reason why our techniques cannot be translated automatically to all path-like geometric grid classes. Indeed, while greedy staircase griddings are easy to describe for the $321$-avoiding staircase, the issue is not so clear-cut for the negative staircase. To see this, consider the permutations $4123$ and $2341$. Both of these permutations can be drawn on the negative staircase, as demonstrated below.
\begin{center}
    \begin{tikzpicture}[scale=0.2, baseline=(current bounding box.center)]
		\draw [thick, line cap=round] (0,4)--(4,0);
		\draw [thick, line cap=round] (4,4)--(8,0);
		\draw [thick, line cap=round] (4,8)--(8,4);
		\draw [thick, line cap=round] (8,8)--(12,4);
		\draw [darkgray, very thick, rounded corners=0.01, line cap=round] (0,0)--(8,0);
		\draw [darkgray, very thick, rounded corners=0.01, line cap=round] (0,4)--(12,4);
		\draw [darkgray, very thick, rounded corners=0.01, line cap=round] (4,8)--(12,8);
		\draw [darkgray, very thick, rounded corners=0.01, line cap=round] (0,0)--(0,4);
		\draw [darkgray, very thick, rounded corners=0.01, line cap=round] (4,0)--(4,8);
		\draw [darkgray, very thick, rounded corners=0.01, line cap=round] (8,0)--(8,8);
		\draw [darkgray, very thick, rounded corners=0.01, line cap=round] (12,4)--(12,8);
		\plotpartialperm{6/2,5/7,7/5,10/6};
    \end{tikzpicture} 
\quad\quad
    \begin{tikzpicture}[scale=0.2, baseline=(current bounding box.center)]
		\draw [thick, line cap=round] (0,4)--(4,0);
		\draw [thick, line cap=round] (4,4)--(8,0);
		\draw [thick, line cap=round] (4,8)--(8,4);
		\draw [thick, line cap=round] (8,8)--(12,4);
		\draw [darkgray, very thick, rounded corners=0.01, line cap=round] (0,0)--(8,0);
		\draw [darkgray, very thick, rounded corners=0.01, line cap=round] (0,4)--(12,4);
		\draw [darkgray, very thick, rounded corners=0.01, line cap=round] (4,8)--(12,8);
		\draw [darkgray, very thick, rounded corners=0.01, line cap=round] (0,0)--(0,4);
		\draw [darkgray, very thick, rounded corners=0.01, line cap=round] (4,0)--(4,8);
		\draw [darkgray, very thick, rounded corners=0.01, line cap=round] (8,0)--(8,8);
		\draw [darkgray, very thick, rounded corners=0.01, line cap=round] (12,4)--(12,8);
		\plotpartialperm{2/2,5/3,7/1,6/6};
    \end{tikzpicture} 
\end{center}
Moreover, up to shifting the choice of cells, the griddings shown above are the only negative staircase griddings of $4123$ and $2341$. The permutation $4123$ shows that we cannot take the members of the first cell to consist of the maximum initial decreasing subsequence. On the other hand, $2341$ shows that we cannot define greedy staircase griddings by the value either. Thus any definition of greedy negative staircase griddings would have to incorporate at least a slightly more global sense of the permutation to be gridded than was required for the $321$-avoiding staircase.

In dealing with either the negative staircase class or the \emph{infinite spiral class} (the third picture in Figure~\ref{fig-infinite-geom}), one would also have to develop a replacement for the Dyck path encoding. However, we do not believe this step is, in and of itself, a major impediment, as the role of the Dyck path encoding is just a proxy for maintaining a set of requirements in finitely many states, and it seems clear that similar devices could be developed for other classes obtained from regular path-like structures.

Much more serious issues present themselves if we remove the path-like condition on the occupied cells; for instance, consider the class of permutations that can be drawn on the \emph{thickened staircase} shown on the far right of Figure~\ref{fig-infinite-geom}. This class is a proper subclass of the $4321$-avoiding permutations and so to see that we cannot hope for a result like Theorem~\ref{thm-321-rational} in this context we need only note that this class contains the class of $321$-avoiding permutations. On the language level, even if we could define the domino encoding in this setting, we could not impose the small ascent condition on the encodings of words describing members of this class, so their encodings would not lie in $\L^{\infty}$, and thus the panel encoding could not be applied.

Finally, an emerging topic of interest in the general study of permutation classes has been strong and broad rationality and algebraicity (see~\cite{albert:inflations-of-g:,albert:subclasses-of-t:}). While the presence of infinite antichains necessarily implies that a class has subclasses whose generating functions are not D-finite, we have shown that certain subclasses of the $321$-avoiding permutations are nevertheless well-structured. To make this notion precise we say that a class is \emph{broadly rational} if it and all of its finitely based subclasses have rational generating functions and/or \emph{strongly rational} if this holds for \emph{all} of its subclasses. Therefore Theorem~\ref{thm-321-rational} shows that all proper subclasses of the $321$-avoiding permutations are broadly rational. The same counting argument as above shows that every strongly rational class must be well-quasi-ordered. Thus Theorem~\ref{thm-321-rational} also implies the following.

\begin{corollary}
\label{cor-321-strong-rat}
A subclass of $\Av(321)$ is strongly rational if and only if it is well-quasi-ordered.
\end{corollary}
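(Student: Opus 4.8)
The plan is to prove the two implications separately, with the backward direction being a quick consequence of Theorem~\ref{thm-321-rational} and the forward direction an elementary counting argument of the kind alluded to in the introduction.

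For the backward direction, suppose $\C\subseteq\Av(321)$ is well-quasi-ordered. Since $\Av(321)$ itself is \emph{not} well-quasi-ordered (it contains the infinite antichain $U$), we have $\C\subsetneq\Av(321)$. Now let $\D\subseteq\C$ be any subclass. Every antichain of $\D$ is an antichain of $\C$, so $\D$ is well-quasi-ordered, and $\D\subseteq\C\subsetneq\Av(321)$, so $\D$ is a proper subclass of $\Av(321)$. Theorem~\ref{thm-321-rational} (the well-quasi-ordered branch) therefore shows $\D$ has a rational generating function; as $\D$ was arbitrary, $\C$ is strongly rational. The one point to be careful about is the temptation to route through the claim ``every subclass of a well-quasi-ordered class is finitely based'': this is \emph{false} in the present setting — for instance $\Av(321)\cap\Av(U)$ is well-quasi-ordered by Theorem~\ref{thm-wqo-fin-int} yet has infinite basis — so one must invoke the well-quasi-ordered hypothesis of Theorem~\ref{thm-321-rational} directly rather than reducing to its finitely-based hypothesis.

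For the forward direction I would argue the contrapositive: if $\C\subseteq\Av(321)$ is not well-quasi-ordered then it is not strongly rational. Let $A\subseteq\C$ be an infinite antichain. Since there are only finitely many permutations of any given length, the lengths occurring in $A$ take infinitely many values, so we may pass to an infinite subset $\{a_1,a_2,\dots\}\subseteq A$ with pairwise distinct lengths $\ell_1,\ell_2,\dots$. Let $\D$ be the downward closure of $\{a_1,a_2,\dots\}$; this is a subclass of $\C$, and each $a_i$ is a maximal element of $\D$ (if $\sigma\in\D$ and $a_i\le\sigma$ then $a_i\le\sigma\le a_j$ for some $j$, forcing $i=j$ and $\sigma=a_i$). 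Consequently, for every $S\subseteq\mathbb{P}$ the set $\D_S=\D\setminus\{a_i\st i\in S\}$ is again downward closed, hence a subclass of $\C$. Because the $\ell_i$ are distinct, $|(\D_S)_n|=|\D_n|-1$ when $n=\ell_i$ for some $i\in S$ and $|(\D_S)_n|=|\D_n|$ otherwise, so the generating function of $\D_S$ equals $\sum_n|\D_n|x^n-\sum_{i\in S}x^{\ell_i}$ (and $\sum_n|\D_n|x^n$ is a genuine power series since $|\D_n|\le n!$). Thus distinct subsets $S$ yield subclasses of $\C$ with pairwise distinct generating functions, so $\C$ has $2^{\aleph_0}$ subclasses with pairwise distinct generating functions.

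Finally, only countably many formal power series are rational functions: a series with integer coefficients that is rational at all equals $p(x)/q(x)$ for polynomials $p,q$ with rational coefficients (equivalently, its coefficients satisfy a linear recurrence with rational coefficients), and there are only countably many such pairs. Hence at least one of the $\D_S$ above has a non-rational generating function, so $\C$ is not strongly rational. The main obstacle here — and the reason for deleting maximal antichain elements from a single fixed class $\D$ rather than taking downward closures of the $a_i$ directly — is ensuring that the continuum-many subclasses genuinely have \emph{distinct} generating functions; the deletion construction makes the generating functions differ by the transparent amount $\sum_{i\in S}x^{\ell_i}$, sidestepping the overlaps that obscure the enumeration of the downward closures themselves.
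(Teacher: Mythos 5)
Your proof is correct and follows exactly the route the paper intends: the paper leaves the corollary to Theorem~\ref{thm-321-rational} plus ``the same counting argument as above,'' and you supply both directions carefully --- the $\D_S$ deletion construction realises the ``uncountably many subclasses with pairwise distinct generating functions'' claim, and the observation that a well-quasi-ordered subclass is automatically proper (since $\Av(321)$ itself is not well-quasi-ordered) is precisely what lets Theorem~\ref{thm-321-rational} apply. Your caution against reducing to the finitely-based branch, with $\Av(321)\cap\Av(U)$ as a well-quasi-ordered but infinitely based witness, is a correct and worthwhile remark.
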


This represents one more piece of evidence for the following conjecture (which is also supported by the results of~\cite{albert:subclasses-of-t:}).

\begin{conjecture}
A permutation class is strongly rational if and only if it is well-quasi-ordered and does not contain the class of $312$-avoiding permutations or any symmetry of it.	
\end{conjecture}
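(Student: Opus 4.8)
The plan is to prove the two implications separately. The forward direction---that a strongly rational class must be well-quasi-ordered and must avoid all symmetries of $\Av(312)$---is comparatively routine, and indeed both halves are essentially remarked upon in the excerpt. The converse is where the genuine difficulty lies, and in its full generality it is exactly the open content of the conjecture; I would nonetheless outline the program by which one expects to attack it, since it is a direct extension of the method of this paper.

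\emph{The forward direction.} Suppose $\C$ is strongly rational. If $\C$ were not well-quasi-ordered it would contain an infinite antichain $A$; then the map $S\mapsto\langle S\rangle$ sending a subset $S\subseteq A$ to its downward closure is injective, because the antichain property forces $\langle S\rangle\cap A=S$. This exhibits $2^{\aleph_0}$ distinct subclasses of $\C$, and since there are only countably many rational (indeed D-finite) power series with integer coefficients, some subclass of $\C$ must have a non-rational generating function, contradicting strong rationality. For the second half, if $\C$ contained a symmetry of $\Av(312)$ then that symmetry would be a subclass of $\C$; but all eight symmetries of the square preserve length and hence the counting sequence, so this subclass would have the Catalan generating function $\tfrac{1-\sqrt{1-4x}}{2x}$, which is algebraic but not rational---again a contradiction. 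Thus both stated conditions are necessary.

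\emph{The converse, and the main obstacle.} For the converse it suffices, since both hypotheses pass to subclasses, to show that every well-quasi-ordered class $\C$ avoiding all four symmetries $\Av(132),\Av(213),\Av(231),\Av(312)$ has a rational generating function; strong rationality then follows. One cannot simply invoke the theory of finite geometric grid classes: the classes $\W_q$ of Proposition~\ref{prop-Uq-regular} are well-quasi-ordered and avoid all four symmetries (every permutation in them is $321$-avoiding, so no symmetry of $\Av(312)$ embeds), yet they contain the sum-closure of the increasing oscillation class $\O$ and so are not even monotone griddable in the sense of Huczynska and Vatter. The proposed route is to prove a structural theorem: such a $\C$ admits a gridding by a (possibly infinite) $\zpm$ matrix whose cell graph is a forest---ideally ``path-like'' in the sense of the Conclusion, where the relative order of entries in two cells is determined by the cells alone once those cells are sufficiently far apart. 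The well-quasi-order hypothesis should bound how far apart cells of high degree can occur, in close analogy with Proposition~\ref{prop-bound-paths}; and avoidance of the four symmetries should force the cell graph to be acyclic, since a cycle in the cell graph of a monotone gridding is known to be incompatible with avoiding all of these classes. Extracting this acyclic, possibly infinite, grid structure from the combinatorial avoidance condition---upgrading the usual griddability arguments to the infinite setting---is, in my view, the principal obstacle, and it is precisely what currently stands between this plan and a proof.

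\emph{From structure to rationality.} Granting such a structural description, the remaining work is to generalise the encoding machinery of this paper from the single infinite staircase to an arbitrary path-like infinite grid: one would define an analogue of the greedy gridding, of the domino and omnibus encodings and the small ascent condition of Section~\ref{sec-omnibus}, of the panel encoding $\eta_c$ of Section~\ref{sec-panel-encoding}, and then replace the Dyck-path proxy used in Proposition~\ref{prop-domino-to-Dyck} by whatever finite-state bookkeeping captures the local consistency requirements of the grid at hand (as the Conclusion notes, this last step appears routine for regular path-like structures but demands care case by case, e.g.\ for the negative staircase or infinite spiral, where even the definition of a canonical gridding is subtle). Detecting the finitely many relative basis elements of a subclass---finitely many because $\C$ is well-quasi-ordered, exactly as in the well-quasi-ordered half of Theorem~\ref{thm-321-rational}---inside these encodings then exhibits every subclass of $\C$ in bijective correspondence with a regular language, hence rational. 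The honest summary is therefore: the forward direction can be completed now, while the converse hinges on the structural theorem above, which is essentially the assertion of the conjecture itself.
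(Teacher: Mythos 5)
This statement is labelled \emph{Conjecture} in the paper, and the paper does not supply a proof of it; it appears in the Conclusion as an open problem, with Corollary~\ref{cor-321-strong-rat} cited only as one more piece of supporting evidence. Your proposal is therefore being asked to go beyond the paper, and you were right to flag this.

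Your forward direction is correct and is essentially the argument the paper invokes (without spelling it out) when it remarks that ``the same counting argument as above shows that every strongly rational class must be well-quasi-ordered.'' The two ingredients are exactly as you give them: a non-wqo class has uncountably many distinct subclasses (via downward closures of subsets of an infinite antichain), of which only countably many can have rational generating functions; and any symmetry of $\Av(312)$ is a permutation class whose counting sequence is Catalan and whose generating function is algebraic but not rational, so a class containing such a symmetry cannot be strongly rational.

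Your assessment of the converse is the honest and correct one. The hypotheses do pass to subclasses, so the converse does reduce to showing that every wqo class $\C\subseteq\Av(132)\cap\Av(213)\cap\Av(231)\cap\Av(312)$ has a rational generating function, and your observation that the classes $\W_q$ already sit strictly outside the reach of finite geometric grid class theory (they contain the sum closure of the increasing oscillations, hence are not monotone griddable) identifies precisely why no off-the-shelf machinery applies. The program you sketch---extract a path-like, possibly infinite, $\zpm$ gridding from the wqo and avoidance hypotheses, adapt the omnibus/panel encodings and replace the Dyck-path proxy with bespoke finite-state bookkeeping---is in line with the directions the paper itself signposts in its Conclusion, together with the caveats the authors raise there about the negative staircase, infinite spiral, and thickened staircase. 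The missing ingredient you name, a structural griddability theorem in the infinite setting driven by the avoidance conditions, is indeed the open content of the conjecture, and your proposal neither overclaims nor obscures this.

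In short: the forward implication is a complete proof; the converse is correctly identified as open, with a reasonable road map and a correct identification of the principal obstruction. No gap in what you actually assert.
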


\noindent{\bf Acknowledgements.} Significant inspiration for this research came from the work of Lozin~\cite{lozin:minimal-classes:}, who proved that while the class of bipartite inversion graphs (the inversion graphs of $321$-avoiding permutations) has unbounded clique-width, every proper subclass of this class has bounded clique-width. We are also grateful to Michael Engen, Jay Pantone, and the referees for their numerous suggestions and corrections.

\bibliographystyle{acm}
\bibliography{av321-refs}

\end{document}